
\documentclass[a4paper]{amsart}

\usepackage{mathtools}


\def\mylabelonoff{off}
\def\allowdisbrkyesno{yes}
\def\numberingtheoremsectionyesno{yes}
\def\numberingequationsectionyesno{yes}
\def\pagesizeextendednormal{extended}

\def\reportudemathyesno{no}
\def\reportudemathnumber{SM-UDE-811}
\def\reportudemathyear{2017}
\def\reportudematheingang{\mydate}


\def\mytitle{On the Maxwell and Friedrichs/Poincar\'e Constants in ND}
\def\mytitlerepude{On the Maxwell and Friedrichs/Poincar\'e Constants in ND}
\def\myshorttitle{On the Maxwell Constants in $\rN$}
\def\myauthorone{Dirk Pauly}
\def\myauthors{\myauthorone}
\def\myaddressone{Fakult\"at f\"ur Mathematik,
Universit\"at Duisburg-Essen, Campus Essen, Germany}
\def\myemailone{dirk.pauly@uni-due.de}
\def\mykeywords{Maxwell's equations, Maxwell constant,
second Maxwell eigenvalue, electro statics, magneto statics,
Poincar\'e inequality, Friedrichs inequality, 
Poincar\'e constant, Friedrichs constant}
\def\mysubjclass{35A23, 35Q61, 35E10, 35F15, 35R45, 46E40, 53A45}
\def\mydate{\today}



\usepackage[mathscr]{eucal}
\usepackage[english]{babel}
\usepackage{a4,exscale,ifthen,amsfonts,amssymb,amsmath,amscd,graphicx,color}
\usepackage{nicefrac,tikz,fancyhdr,caption,array,multirow,multicol,booktabs,algorithm,algorithmic}
\usepackage[all]{xy}

\ifthenelse{\equal{\mylabelonoff}{on}}
{\newcommand{\mylabel}[1]{\label{#1}\fbox{{\sf #1}}}}
{\newcommand{\mylabel}[1]{\label{#1}}}
\ifthenelse{\equal{\allowdisbrkyesno}{yes}}
{\allowdisplaybreaks}
{}

\ifthenelse{\equal{\pagesizeextendednormal}{extended}}
{\setlength{\textwidth}{16cm}
\setlength{\textheight}{22cm}
\setlength{\oddsidemargin}{-0.2cm}
\setlength{\evensidemargin}{-0.2cm}}
{}
\ifthenelse{\equal{\numberingequationsectionyesno}{yes}}
{\numberwithin{equation}{section}}
{}



\newcommand{\ovl}[1]{\overline{#1}}
\newcommand{\unl}[1]{\underline{#1}}

\DeclareMathOperator{\diam}{diam}


\newcommand{\cp}{c_{\mathsf{p}}}

\newcommand{\cf}{c_{\mathsf{f}}}


\newcommand{\setb}[2]{\big\{#1\,:\,#2\big\}}

\ifthenelse{\equal{\numberingtheoremsectionyesno}{yes}}
{\newtheorem{lem}{Lemma}[section]}
{\newtheorem{lem}{Lemma}}

\newtheorem{theo}[lem]{Theorem}
\newtheorem{cor}[lem]{Corollary}
\newtheorem{rem}[lem]{Remark}

\newenvironment{acknow}{{\vspace*{1cm}\noindent\sc Acknowledgements }}{}

\newcommand{\om}{\Omega}

\newcommand{\ga}{\Gamma}

\newcommand{\eps}{\epsilon}


\newcommand{\reals}{\mathbb{R}}

\newcommand{\rt}{\reals^{3}}
\newcommand{\rN}{\reals^{N}}

\newcommand{\rNtN}{\reals^{N\times N}}

\newcommand{\rttt}{\reals^{3\times3}}

\newcommand{\oh}{\nicefrac{1}{2}}
\newcommand{\moh}{-\oh}

\newcommand{\ot}{\leftarrow}


\DeclareMathOperator{\id}{id}

\DeclareMathOperator{\skw}{skw}

\DeclareMathOperator{\supp}{supp}
\DeclareMathOperator{\dist}{dist}
\DeclareMathOperator{\A}{A}

\DeclareMathOperator{\As}{\A^{*}}

\DeclareMathOperator{\cA}{\mathcal{A}}

\DeclareMathOperator{\cAs}{\cA^{*}}

\DeclareMathOperator{\p}{\partial}
\DeclareMathOperator{\na}{\nabla}

\DeclareMathOperator{\rot}{rot}

\DeclareMathOperator{\curl}{curl}

\DeclareMathOperator{\divergence}{div}
\renewcommand{\div}{\divergence}

\DeclareMathOperator{\ed}{d}
\DeclareMathOperator{\cd}{\delta}

\newcommand{\csymbol}{\mathsf{C}}
\newcommand{\cgen}[3]{\overset{#1}{\csymbol}{}^{#2}_{#3}}

\newcommand{\ci}{\cgen{}{\infty}{}}
\newcommand{\cic}{\cgen{\circ}{\infty}{}}

\newcommand{\ct}{\cgen{}{2}{}}

\newcommand{\cicom}{\cic(\om)}

\newcommand{\lsymbol}{\mathsf{L}}
\newcommand{\lgen}[3]{\overset{#1}{\lsymbol}{}^{#2}_{#3}}

\newcommand{\lt}{\lgen{}{2}{}}

\newcommand{\li}{\lgen{}{\infty}{}}

\newcommand{\lteps}{\lgen{}{2}{\eps}}

\newcommand{\ltom}{\lt(\om)}

\newcommand{\liom}{\li(\om)}

\newcommand{\ltepsom}{\lteps(\om)}


\newcommand{\hsymbol}{\mathsf{H}}

\newcommand{\hgen}[3]{\overset{#1}{\hsymbol}{}^{#2}_{#3}}

\newcommand{\ho}{\hgen{}{1}{}}

\newcommand{\hoc}{\hgen{\circ}{1}{}}

\newcommand{\hoom}{\ho(\om)}

\newcommand{\Hoom}{\Ho(\om)}
\newcommand{\hocom}{\hoc(\om)}









\newcommand{\dsymbol}{\mathsf{D}}




\newcommand{\harmsymbol}{\mathcal{H}}
\newcommand{\harmgen}[3]{\overset{#1}{\harmsymbol}{}^{#2}_{#3}}

\newcommand{\harmd}{\harmgen{}{}{\mathsf{D}}}

\newcommand{\harmdom}{\harmd(\om)}


\newcommand{\norm}[1]{|#1|}

\newcommand{\bnorm}[1]{\big|#1\big|}

\newcommand{\normltom}[1]{\norm{#1}_{\ltom}}

\newcommand{\normltepsom}[1]{\norm{#1}_{\ltepsom}}

\newcommand{\scp}[2]{\langle#1,#2\rangle}

\newcommand{\scpltom}[2]{\scp{#1}{#2}_{\ltom}}


\newcommand{\preprintudemath}[5]{
\thispagestyle{empty}
\Large
\begin{center}SCHRIFTENREIHE DER FAKULT\"AT F\"UR MATHEMATIK\end{center}
\vspace*{5mm}
\begin{center}#1\end{center}
\vspace*{5mm}
\begin{center}by\end{center}
\begin{center}#2\end{center}
\vspace*{5mm}
\begin{center}#3\hspace{80mm}#4\end{center}
\newpage
\thispagestyle{empty}
\vspace*{210mm}
Received: #5
\newpage
\addtocounter{page}{-2}
\normalsize}


\title[\sc\myshorttitle]{\Large\sf\mytitle}
\author{\myauthorone}
\address{\myaddressone}
\email[\myauthorone]{\myemailone}
\keywords{\mykeywords}
\subjclass{\mysubjclass}
\date{\mydate}



\setcounter{footnote}{1}


\renewcommand{\cic}{\mathring\csymbol^{\infty}}
\renewcommand{\hoc}{\mathring\hsymbol^1}

\DeclareMathOperator{\Az}{A_{0}}
\DeclareMathOperator{\Azs}{A_{0}^{*}}
\DeclareMathOperator{\Ao}{A_{1}}
\DeclareMathOperator{\Aos}{A_{1}^{*}}

\newcommand{\wto}{\xrightharpoonup}
\newcommand{\cptemb}{\hookrightarrow\hspace*{-0.85em}\hookrightarrow}
\newcommand{\harmdepsom}{\harmsymbol_{\mathsf{D},\eps}(\om)}
\newcommand{\harmdidom}{\harmsymbol_{\mathsf{D},\id}(\om)}
\newcommand{\harmnepsom}{\harmsymbol_{\mathsf{N},\eps}(\om)}
\newcommand{\Harmdepsom}[1]{\harmsymbol^{#1}_{\mathsf{D},\eps}(\om)}
\newcommand{\Harmnepsom}[1]{\harmsymbol^{#1}_{\mathsf{N},\eps}(\om)}
\newcommand{\scpltepsom}[2]{\scp{#1}{#2}_{\ltepsom}}

\newcommand{\epsu}{\unl{\eps}}
\newcommand{\epso}{\ovl{\eps}}
\newcommand{\epsh}{\hat{\eps}}
\newcommand{\cpc}{\cf}
\newcommand{\cmeps}{c_{\mathsf{m},\eps}}
\newcommand{\cmteps}{c_{\mathsf{m,t},\eps}}

\newcommand{\cmneps}{c_{\mathsf{m,n},\eps}}

\newcommand{\Lt}[1]{\lsymbol^{2,#1}}
\newcommand{\Lteps}[1]{\lsymbol^{2,#1}_{\eps}}
\newcommand{\Ltmu}[1]{\lsymbol^{2,#1}_{\mu}}
\newcommand{\genHo}[1]{\hsymbol^{1,#1}}
\newcommand{\Cic}[1]{\mathring\csymbol^{\infty,#1}}
\newcommand{\Hoc}[1]{\mathring\hsymbol^{1,#1}}
\newcommand{\eD}[1]{\dsymbol^{#1}}
\newcommand{\eDz}[1]{\dsymbol^{#1}_{0}}
\newcommand{\cD}[1]{\Delta^{#1}}
\newcommand{\cDz}[1]{\Delta^{#1}_{0}}
\newcommand{\eDc}[1]{\mathring\dsymbol^{#1}}
\newcommand{\eDcz}[1]{\mathring\dsymbol^{#1}_{0}}
\newcommand{\cDc}[1]{\mathring\Delta^{#1}}
\newcommand{\cDcz}[1]{\mathring\Delta^{#1}_{0}}
\newcommand{\Ltom}[1]{\Lt{#1}(\om)}
\newcommand{\Ltepsom}[1]{\Lteps{#1}(\om)}
\newcommand{\Ltmuom}[1]{\Ltmu{#1}(\om)}
\renewcommand{\Hoom}[1]{\genHo{#1}(\om)}
\newcommand{\Cicom}[1]{\Cic{#1}(\om)}
\newcommand{\Hocom}[1]{\Hoc{#1}(\om)}
\newcommand{\eDom}[1]{\eD{#1}(\om)}
\newcommand{\eDzom}[1]{\eDz{#1}(\om)}
\newcommand{\cDom}[1]{\cD{#1}(\om)}
\newcommand{\cDzom}[1]{\cDz{#1}(\om)}
\newcommand{\eDcom}[1]{\eDc{#1}(\om)}
\newcommand{\eDczom}[1]{\eDcz{#1}(\om)}
\newcommand{\cDcom}[1]{\cDc{#1}(\om)}
\newcommand{\cDczom}[1]{\cDcz{#1}(\om)}
\newcommand{\Cteps}[1]{c_{\mathsf{t},#1,\eps}}
\newcommand{\Cneps}[1]{c_{\mathsf{n},#1,\eps}}
\newcommand{\Ctmu}[1]{c_{\mathsf{t},#1,\mu}}

\newcommand{\Ct}[1]{c_{\mathsf{t},#1}}
\newcommand{\Cn}[1]{c_{\mathsf{n},#1}}
\newcommand{\Cedcteps}[1]{c_{\edc,\mathsf{t},#1,\eps}}
\newcommand{\Cedctmu}[1]{c_{\edc,\mathsf{t},#1,\mu}}
\newcommand{\Cedct}[1]{c_{\edc,\mathsf{t},#1}}
\newcommand{\tCedcteps}[1]{\tilde{c}_{\edc,\mathsf{t},#1,\eps}}
\newcommand{\tCedctmu}[1]{\tilde{c}_{\edc,\mathsf{t},#1,\mu}}
\newcommand{\tCedct}[1]{\tilde{c}_{\edc,\mathsf{t},#1}}
\renewcommand{\H}{\mathsf{H}}
\renewcommand{\A}{\mathrm{A}}
\renewcommand{\As}{\A^{*}}

\renewcommand{\cA}{\mathcal{A}}
\renewcommand{\cAs}{\cA^{*}}

\renewcommand{\Az}{\mathrm{A}_{0}}
\renewcommand{\Azs}{\Az^{*}}

\DeclareMathOperator{\cAz}{\mathcal{A}_{0}}
\DeclareMathOperator{\cAzs}{\mathcal{A}_{0}^{*}}

\renewcommand{\Ao}{\mathrm{A}_{1}}
\renewcommand{\Aos}{\Ao^{*}}

\DeclareMathOperator{\cAo}{\mathcal{A}_{1}}
\DeclareMathOperator{\cAos}{\mathcal{A}_{1}^{*}}

\DeclareMathOperator{\edc}{\mathring\ed}
\DeclareMathOperator{\cdc}{\mathring\cd}
\newcommand{\longvec}{\overrightarrow}
\DeclareMathOperator{\adj}{adj}
\DeclareMathOperator{\adjt}{\adj^{\top}}

\begin{document}


\ifthenelse{\equal{\reportudemathyesno}{yes}}
{\preprintudemath{\mytitlerepude}{\myauthors}{\reportudemathnumber}{\reportudemathyear}{\reportudematheingang}}
{}


\begin{abstract}
We prove that for bounded and convex domains in arbitrary dimensions, 
the Maxwell constants are bounded from below and above by Friedrichs' and Poincar\'e's constants, respectively.
Especially, the second positive Maxwell eigenvalues in ND
are bounded from below by the square root of the second Neumann-Laplace eigenvalue.
\end{abstract}

\vspace*{-10mm}
\maketitle
\tableofcontents


\section{Introduction}

\subsection{Maxwell's Equations}

Maxwell's equations are fundamental in physics 
and play an important role for mathematical physics itself.
In a domain $\om\subset\rt$ (open and connected set)
with boundary $\ga$ these famous equations read
for the pair of the electric and magnetic field $(E,H)$
\begin{align*}
\curl E+\p_{t}\mu H&=G,
&
-\curl H+\p_{t}\eps E&=F
&
\text{in }&(0,T)\times\om,\\
\div\eps E&=f,
&
\div\mu H&=g
&
\text{in }&(0,T)\times\om,\\
n\times E&=0,
&
n\cdot\mu H&=0
&
\text{at }&(0,T)\times\ga,
\end{align*}
where we have already eliminated the fields $D$ and $B$
by the constitutive laws $D=\eps E$ and $B=\mu H$, respectively.
Moreover, physically meaningful is $F=-j$ as current density and $G=0$
as well as $f=\rho$ as charge density and $g=0$. 
Furthermore, initial conditions have to be imposed on $E(0)$ and $H(0)$ in $\om$.
Note that in the non-stationary case the two 
divergence equations are redundant by the two $\curl$-equations
and the complex property $\div\rot=0$.
Moreover, the second normal boundary condition for $\mu H$ is 
already given by the first tangential boundary condition for $E$ and the first $\curl$-equation 
as $n\times E=0$ implies $n\cdot\curl E=0$ at $(0,T)\times\ga$.
In the time-harmonic setting 
(all fields depend on a fixed frequency $\omega>0$ in a sinusodial way)
we have
\begin{align*}
\curl E+i\omega\mu H&=G,
&
-\curl H+i\omega\eps E&=F
&
\text{in }&\om,\\
\div\eps E&=f,
&
\div\mu H&=g
&
\text{in }&\om,\\
n\times E&=0,
&
n\cdot\mu H&=0
&
\text{at }&\ga,
\end{align*}
where the divergence equations and the second boundary condition are still redundant.
Finally, the electro-magnto static equations are given by 
\begin{align*}
\curl E&=G,
&
\curl H&=-F
&
\text{in }&\om,\\
\div\eps E&=f,
&
\div\mu H&=g
&
\text{in }&\om,\\
n\times E&=0,
&
n\cdot\mu H&=0
&
\text{at }&\ga
\end{align*}
and we emphasize that here the divergence equations and the boundary condition for $H$ 
are no longer redundant as the systems completely decouples into two separate systems,
the electro static equations for the electric field $E$ 
and the magneto static equations for magnetic field $H$.

Proper solution theories in the sense of Hadamard, i.e.,
unique and continuous solvability,
are well known, see e.g. \cite{leisbook}.
In the static and time-harmonic situations the essential tool is the
so-called Maxwell estimate (setting $H=E$ and $\mu=\eps$)
$$\normltepsom{E}^2
\leq\cmeps\big(\normltom{\curl E}^2+\normltom{\div\eps E}^2\big),$$
see \eqref{intromaxestelec} and \eqref{intromaxestmag},
being valid for all $E\in\ltom$ with $\curl E\in\ltom$ and $\div\eps E\in\ltom$
as well as either $n\times E|_{\ga}=0$ or $n\cdot\eps E|_{\ga}=0$
such that $E$ is perpendicular to the possible kernels
$\harmdepsom$ or $\harmnepsom$, respectively,
the so-called Dirichlet or Neumann fields.
Of course, all terms have to be understood in a weak way
which we define below in a suitable Sobolev setting.
Obviously, the best constant $\cmeps$ resp. $(1+\cmeps^2)^{\nicefrac{1}{2}}$ 
is the norm of the respective bounded inverse, mapping
the right hand sides to the solution $E$ (resp. $H$).

A more general situation can be considered if we assume
$\om$ to be a Riemannan manifold of dimension $N$.
In particular $\om$ may be an open subset of $\rN$
or some $N$-dimensional surface in $\reals^{M}$.
Then Maxwell's equations can be expressed independently 
of special coordinates by the calculus of differential forms
using the exterior derivative $\ed$ and co-derivative $\cd=\pm*\cd*$
as well as the Hodge star operator $*$.
Focusing on the static equations we have
for a $q$-from $\xi$ and a $(q+1)$-form $\zeta$
\begin{align*}
\ed\zeta&=\phi,
&
\text{in }&\om,\\
\cd\eps\,\zeta&=\theta,
&
\text{in }&\om,\\
\iota^{*}\zeta&=0,
&
\text{on }&\ga,
\end{align*}
where $\iota$ is the canonical embedding of the boundary manifold $\ga$ into $\ovl{\om}$
and $\iota^{*}$ its pull-back. 
For $N=3$, $q=1$ and the vector proxy $E=\vec\zeta$
we get back the classical electro static formulation of vector analysis from above.
For $N=3$, $q=2$ and the vector proxy $\mu H=\vec\zeta$ (setting $\eps=\mu^{-1}$)
we get back the classical magneto static formulation.
Here, the crucial tool for a proper solution theory is the so-called generalized Maxwell estimate
$$\normltepsom{\zeta}^2
\leq\cmeps\big(\normltom{\ed\zeta}^2+\normltom{\cd\eps\,\zeta}^2\big),$$
see \eqref{intromaxestelecdf},
being valid for all $\zeta\in\ltom$ with $\ed\zeta\in\ltom$ and $\div\eps\,\zeta\in\ltom$
such that the related boundary and kernel conditions hold in a suitable weak Sobolev sense.

\subsection{The Maxwell Constants}

Let $\om\subset\rt$ be a bounded weak Lipschitz domain,
see \cite[Defintion 2.3]{bauerpaulyschomburgmcpweaklip} for an exact definition.
We denote the standard Lebesgue and Sobolev spaces by
$\ltom$, $\hoom$, which might be scalar-, vector-, or tensor-valued, 
and by  $\hsymbol(\curl,\om)$, $\hsymbol(\div,\om)$
the respective Sobolebv spaces for the rotation $\curl$
and the divergence $\div$.
Moreover, we introduce homogeneous scalar, tangential, and normal boundary conditions in the spaces
$\hocom$, $\mathring\hsymbol(\curl,\om)$, $\mathring\hsymbol(\div,\om)$, respectively,
which are defined as closures of $\cicom$-functions, vector, or tensor fields
under the corresponding graph norms.
Moreover, let $\eps:\om\to\rttt$ be a symmetric, $\liom$-bounded, 
and uniformly positive definite matrix field.

It is well known that the tangential version of Weck's selection theorem, stating that the embedding
\begin{align}
\mylabel{introallcompactembmaxt}
\mathring\hsymbol(\curl,\om)\cap\eps^{-1}\hsymbol(\div,\om)\cptemb\ltom
\end{align}
is compact, see \cite{weckmax,picardcomimb,webercompmax,witschremmax,picardweckwitschxmas,bauerpaulyschomburgmcpweaklip}, 
is the crucial tool of any analysis for static or time-harmonic Maxwell equations.
Especially, \eqref{introallcompactembmaxt} implies by a standard indirect argument the following important 
Maxwell estimate for tangential boundary conditions:
There exists a constant $\cmteps>0$ such that for all
$E\in\mathring\hsymbol(\curl,\om)\cap\eps^{-1}\hsymbol(\div,\om)\cap\harmdepsom^{\bot_{\ltepsom}}$
\begin{align}
\mylabel{intromaxestelec}
\normltepsom{E}
\leq\cmteps\big(\normltom{\curl E}^2+\normltom{\div\eps E}^2\big)^{\nicefrac{1}{2}}
\end{align}
holds, where the kernel space of (harmonic) Dirichlet fields is denoted by
$$\harmdepsom:=\setb{E\in\mathring\hsymbol(\curl,\om)\cap\eps^{-1}\hsymbol(\div,\om)}{\curl E=0,\,\div\eps E=0}.$$
Note that $\harmdepsom$ is finite dimensional by \eqref{introallcompactembmaxt} as its unit ball is compact.
We also introduce the weighted $\eps$-$\ltom$-scalar product 
$\scpltepsom{\,\cdot\,}{\,\cdot\,}:=\scpltom{\eps\,\cdot\,}{\,\cdot\,}$
and the corresponding induced weighted 
$\eps$-$\ltom$-norm $\normltepsom{\,\cdot\,}:=\scpltepsom{\,\cdot\,}{\,\cdot\,}^{\nicefrac{1}{2}}=\normltom{\eps^{\nicefrac{1}{2}}\,\cdot\,}$. 
If we equip $\ltom$ with this weighted scalar product we write $\ltepsom$.
Moreover, $\bot_{\ltepsom}$ denotes orthogonality with respect to the $\eps$-$\ltom$-scalar product.
If $\eps$ equals the identity $\id$, it will be skipped in our notations,
e.g., we write $\bot_{\ltom}$ and $\harmdom=\harmdidom$.

The fact that a compact embedding implies by an indirect argument 
a corresponding Friedrichs/Poincar\'e type estimate, is a well known and powerful concept. 
Prominent examples are the Friedrichs and Poincar\'e estimates itself, i.e.,
\begin{align}
\mylabel{intropoincarehoc}
\exists\,\cpc&>0&\forall\,u&\in\hocom&\normltom{u}&\leq\cpc\normltom{\na u},\\
\mylabel{intropoincareho}
\exists\,\cp&>0&\forall\,v&\in\hoom\cap\reals^{\bot_{\ltom}}&\normltom{v}&\leq\cp\normltom{\na v},
\end{align}
which follow immediately using Rellich's selection theorem, i.e., the compactness of
\begin{align}
\mylabel{introallcompactembrellich}
\hocom\subset\hoom\cptemb\ltom.
\end{align}
For the best possible constants it holds
$$\cpc^2=\frac{1}{\lambda_{1}}<\frac{1}{\mu_{2}}=\cp^2,$$
where 
$$\lambda_{1}=\min_{u\in\hocom}\frac{\normltom{\na u}^2}{\normltom{u}^2},\qquad
\mu_{2}=\min_{v\in\hoom\cap\reals^{\bot_{\ltom}}}\frac{\normltom{\na v}^2}{\normltom{v}^2}$$
is the first Dirichlet resp. second Neumann eigenvalue of the negative Laplacian,
see, e.g., \cite{filonovdirneulapeigen} and the literature cited there.
Analogously to \eqref{introallcompactembmaxt} and \eqref{intromaxestelec},
the normal version of Weck's selection theorem,  i.e., the compactness of the embedding
\begin{align}
\mylabel{introallcompactembmaxn}
\hsymbol(\curl,\om)\cap\eps^{-1}\mathring\hsymbol(\div,\om)\cptemb\ltom,
\end{align}
shows the corresponding Maxwell estimate for normal boundary conditions:
There exists a constant $\cmneps>0$ such that for all
$H\in\hsymbol(\curl,\om)\cap\eps^{-1}\mathring\hsymbol(\div,\om)\cap\harmnepsom^{\bot_{\ltepsom}}$
\begin{align}
\mylabel{intromaxestmag}
\normltepsom{H}
\leq\cmneps\big(\normltom{\curl H}^2+\normltom{\div\eps H}^2\big)^{\nicefrac{1}{2}},
\end{align}
where we define the finite dimensional kernel space of (harmonic) Neumann fields by
$$\harmnepsom:=\setb{H\in\hsymbol(\curl,\om)\cap\eps^{-1}\mathring\hsymbol(\div,\om)}{\curl H=0,\,\div\eps H=0}.$$
Similarly to the Friedrichs and Poincare constants we always assume the best constants, i.e.,
\begin{align*}
\frac{1}{\cmteps^2}
&=\min_{E}\frac{\normltom{\curl E}^2+\normltom{\div\eps E}^2}{\normltepsom{E}^2},
&
\frac{1}{\cmneps^2}
&=\min_{H}\frac{\normltom{\curl H}^2+\normltom{\div\eps H}^2}{\normltepsom{H}^2},
\end{align*}
where the first minimum is taken over
$E\in\mathring\hsymbol(\curl,\om)\cap\eps^{-1}\hsymbol(\div,\om)\cap\harmdepsom^{\bot_{\ltepsom}}$
and the second over 
$H\in\hsymbol(\curl,\om)\cap\eps^{-1}\mathring\hsymbol(\div,\om)\cap\harmnepsom^{\bot_{\ltepsom}}$.

In \cite{paulymaxconst0,paulymaxconst1,paulymaxconst2} we have shown that for convex $\om$
and, provided that always the best possible constants are chosen, the estimates
\begin{align}
\mylabel{introconstest}
\frac{\cpc}{\epsh^3}\leq\cmteps,\cmneps\leq\cp\epsh\leq\frac{\diam(\om)}{\pi}\epsh
\end{align}
hold, where 
\begin{align}
\mylabel{epsuotwo}
\epsh:=\max\{\epsu,\epso\},
\end{align}
and the lower and upper bounds $\epsu,\epso>0$ for $\eps$ are defined by
\begin{align}
\mylabel{epsuoone}
\forall\,E\in\ltom\qquad
\epsu^{-2}\normltom{E}^2\leq\scpltom{\eps E}{E}\leq\epso^2\normltom{E}^2,
\end{align}
which exist by our assumptions.
Note that convex domains are even strong Lipschitz, see, e.g.,
\cite[Corollary 1.2.2.3]{grisvardbook}
and topologically trivial, i.e., they satisfy $\harmdepsom=\harmnepsom=\{0\}$
as $\dim\harmnepsom$ resp. $\dim\harmdepsom$
is given by the first resp. second Betti number of $\om$.

The aim of the paper at hand is to generalize and improve the estimates \eqref{introconstest}
for the Maxwell constants to convex domains $\om\subset\rN$.
In $\rN$ it is useful to work within the setting of alternating differential forms
of general order $q\in\{0,\dots,N\}$. More precisely, let
$\om\subset\rN$ be a bounded weak Lipschitz domain, 
whose definition is easily modified from the 3D case,
see again \cite[Defintion 2.3]{bauerpaulyschomburgmcpweaklip}.
We denote the standard Lebesgue and Sobolev spaces by $\Ltom{q}$, and
\begin{align*}
\eDom{q}:=\hsymbol^{q}(\ed,\om)&=\setb{\omega\in\Ltom{q}}{\ed\omega\in\Ltom{q+1}},\\
\cDom{q}:=\hsymbol^{q}(\cd,\om)&=\setb{\omega\in\Ltom{q}}{\cd\omega\in\Ltom{q-1}},
\end{align*}
where $\ed$ is the exterior derivative, $\delta:=(-1)^{(q-1)N}*\ed*$ the co-derivative,
and $*$ the Hodge-star-operator.
Moreover, we introduce so-called homogeneous tangential and normal boundary conditions in the spaces
\begin{align*}
\eDcom{q}=\mathring\hsymbol^{q}(\ed,\om),\quad
\cDcom{q}=\mathring\hsymbol^{q}(\cd,\om),
\end{align*}
respectively, which are defined as before as closures of $\Cicom{q}$-forms under the corresponding graph norms.
A vanishing derivative will always be indicated by an index zero at the lower right corner, e.g.,
$$\eDzom{q}:=\setb{\omega\in\eDom{q}}{\ed\omega=0},\quad
\cDczom{q}:=\setb{\omega\in\cDcom{q}}{\cd\omega=0}.$$
It holds
\begin{align}
\mylabel{starDdef}
*\,\eDom{q}=\cDom{N-q},\quad
*\,\cDom{q}=\eDom{N-q},\quad
*\,\eDcom{q}=\cDcom{N-q},\quad
*\,\cDcom{q}=\eDcom{N-q}.
\end{align}
Inner products and hence norms are defined by
\begin{align*}
\scp{\omega}{\zeta}_{\Ltom{q}}
&:=\int_{\om}\omega\wedge*\,\bar{\zeta},&
\omega,\zeta
&\in\Ltom{q},\\
\scp{\omega}{\zeta}_{\eDom{q}}
&:=\scp{\omega}{\zeta}_{\Ltom{q}}+\scp{\ed\omega}{\ed\zeta}_{\Ltom{q+1}},&
\omega,\zeta
&\in\eDom{q},\\
\scp{\omega}{\zeta}_{\cDom{q}}
&:=\scp{\omega}{\zeta}_{\Ltom{q}}+\scp{\cd\omega}{\cd\zeta}_{\Ltom{q-1}},&
\omega,\zeta
&\in\cDom{q}.
\end{align*}
We emphasize that for $q$-forms $\omega$ given in Cartesian coordinates (identity map/chart), i.e.,
$$\omega=\sum_{I}\omega_{I}\ed x^{I}$$
with ordered multi-indices $I=(i_{1},\dots,i_{q})$,
we have $\omega\in\Ltom{q}$ if and only if $\omega_{I}\in\ltom$ for all $I$.
The inner product for $\omega,\zeta\in\Ltom{q}$ is given by
$$\scp{\omega}{\zeta}_{\Ltom{q}}
=\int_{\om}\omega\wedge*\,\bar{\zeta}
=\sum_{I}\int_{\om}\omega_{I}\bar{\zeta}_{I}
=\sum_{I}\scpltom{\omega_{I}}{\zeta_{I}}
=\scpltom{\vec\omega}{\vec\zeta},$$
where we introduce the vector proxy notation
$$\vec\omega=[\omega_{I}]_{I}\in\lt(\om;\reals^{N_{q}}),\qquad
N_{q}:=\binom{N}{q}.$$

The spaces $\Ltepsom{q}$ with the inner products 
$\scp{\,\cdot\,}{\,\cdot\,}_{\Ltepsom{q}}=\scp{\eps\,\cdot\,}{\,\cdot\,}_{\Ltom{q}}$
are defined in the same way as for vector or tensor fields, where
$\eps:\Ltom{q}\to\Ltom{q}$ is a symmetric, bounded, and uniformly positive definite 
transformation on $q$-forms. 
Such transformations will be called admissible.
All other definitions and notations concerning $\eps$
carry over to $q$-forms as well, e.g., we have \eqref{epsuoone} and \eqref{epsuotwo}.
More precisely, by the assumptions on $\eps$ we have
\begin{align}
\mylabel{epsuoonedf}
\exists\,\epsu,\epso>0\quad\forall\,\omega\in\Ltom{q}\qquad
\epsu^{-2}\norm{\omega}_{\Ltom{q}}^2\leq\scp{\eps\,\omega}{\omega}_{\Ltom{q}}\leq\epso^2\norm{\omega}_{\Ltom{q}}^2
\end{align}
and we note $\norm{\omega}_{\Ltepsom{q}}^2=\scp{\eps\,\omega}{\omega}_{\Ltom{q}}=\norm{\eps^{\nicefrac{1}{2}}\omega}_{\Ltom{q}}^2$
as well as $\norm{\eps\,\omega}_{\Ltom{q}}=\norm{\eps^{\nicefrac{1}{2}}\omega}_{\Ltepsom{q}}$. Thus, for all $\omega\in\Ltom{q}$
\begin{align}
\mylabel{epsuotwodf}
\epsu^{-1}\norm{\omega}_{\Ltom{q}}\leq\norm{\omega}_{\Ltepsom{q}}\leq\epso\norm{\omega}_{\Ltom{q}},\quad
\epsu^{-1}\norm{\omega}_{\Ltepsom{q}}\leq\norm{\eps\,\omega}_{\Ltom{q}}\leq\epso\norm{\omega}_{\Ltepsom{q}}.
\end{align}

As in the vector-valued case we can also define the Sobolev spaces $\Hoom{q}$ resp. $\Hocom{q}$
component-wise by defining $\omega\in\Hoom{q}$ resp. $\omega\in\Hocom{q}$ 
if and only if $\omega_{I}\in\hoom$ resp. $\omega_{I}\in\hocom$ for all $I$. 
In these cases we have for $n=1,\dots,N$
$$\p_{n}\omega=\sum_{I}\p_{n}\omega_{I}\ed x^{I}$$
and we utilize the vector proxy notation also for the gradient, i.e.,
$$\na\vec\omega=[\p_{n}\omega_{I}]_{n,I}
=[\dots\na\omega_{I}\dots]_{I}\in\lt(\om;\reals^{N\times N_{q}}).$$
Hence, for $\omega,\zeta\in\Hoom{q}$
\begin{align*}
\scp{\omega}{\zeta}_{\Hoom{q}}
&:=\scp{\omega}{\zeta}_{\Ltom{q}}
+\sum_{n=1}^{N}\scp{\p_{n}\omega}{\p_{n}\zeta}_{\Ltom{q}}
=\int_{\om}\omega\wedge*\,\bar{\zeta}
+\sum_{n=1}^{N}\int_{\om}(\p_{n}\omega)\wedge*\,(\p_{n}\bar{\zeta})\\
&\;=\sum_{I}\big(\int_{\om}\omega_{I}\bar{\zeta}_{I}+
\sum_{n=1}^{N}\int_{\om}\p_{n}\omega_{I}\p_{n}\bar{\zeta}_{I}\big)
=\sum_{I}\big(\scpltom{\omega_{I}}{\zeta_{I}}
+\scpltom{\na\omega_{I}}{\na\zeta_{I}}\big)\\
&\;=\scpltom{\vec\omega}{\vec\zeta}
+\scpltom{\na\vec\omega}{\na\vec\zeta}
=\scp{\vec\omega}{\vec\zeta}_{\hoom}.
\end{align*}
Note that
$$\hoom=\Hoom{0}=\eDom{0}=*\,\cDom{N},\quad
\hocom=\Hocom{0}=\eDcom{0}=*\,\cDcom{N}$$
and 
$$\ed\omega=\sum_{n}\p_{n}\omega\ed x^{n},\qquad
\omega\in\hoom.$$

Like before, Weck's selection theorem (tangential version), stating that the embedding
\begin{align}
\mylabel{introallcompactembmaxtdf}
\eDcom{q}\cap\eps^{-1}\cDom{q}\cptemb\Ltom{q}
\end{align}
is compact, holds, see \cite{weckmax} for bounded strong Lipschitz domains (strong cone property)
and \cite{picardcomimb} for bounded weak Lipschitz domains.
The compact embeddings \eqref{introallcompactembmaxt}, \eqref{introallcompactembmaxn}
hold even for bounded weak Lipschitz domains and mixed boundary conditions,
see, e.g., the recent results \cite[Theorem 4.7, Theorem 4.8]{bauerpaulyschomburgmcpweaklip}.
The first proof of Weck's selection theorem \eqref{introallcompactembmaxtdf} 
for strong Lipschitz domains (strong/uniform cone property),
even for differential forms on Riemannian manifolds (and hence especially for $\om\subset\rN$),
has been given by Weck in \cite{weckmax}. 
The first proof for weak Lipschitz domains/manifolds 
is due to Picard and given in \cite{picardcomimb}. 
More related results and generalizations can be found in
\cite{leisbook,picardpotential,picardboundaryelectro,picarddeco,picardweckwitschxmas,
webercompmax,witschremmax,jochmanncompembmaxmixbc,goldshteinmitreairinamariushodgedecomixedbc,
jakabmitreairinamariusfinensolhodgedeco}.
Note that the boundedness of the underlying domain $\om$ is crucial,
since one has to work in polynomially weighted Sobolev spaces in unbounded (like exterior) domains,
see, e.g., \cite{kuhnpaulyregmax,leistheoem,leisbook,paulytimeharm,paulystatic,
paulydeco,paulyasym,picardpotential,picardweckwitschxmas}.

As we obtain the corresponding normal version
$$\eDom{q}\cap\eps^{-1}\cDcom{q}\cptemb\Ltom{q}$$
by applying the $*$-operator, see \eqref{starDdef},
we may concentrate on the tangential version \eqref{introallcompactembmaxtdf}.
Especially, \eqref{introallcompactembmaxtdf} implies (again by an indirect argument)
the following Maxwell type estimate:
There exists $\Cteps{q}>0$ such that for all 
$\omega\in\eDcom{q}\cap\eps^{-1}\cDom{q}\cap\Harmdepsom{q}^{\bot_{\Ltepsom{q}}}$
\begin{align}
\mylabel{intromaxestelecdf}
\norm{\omega}_{\Ltepsom{q}}
\leq\Cteps{q}\big(\norm{\ed\omega}_{\Ltom{q+1}}^2+\norm{\cd\eps\,\omega}_{\Ltom{q-1}}^2\big)^{\nicefrac{1}{2}}
\end{align}
holds. Here, we introduce the finite dimensional (again the unit ball is compact)
kernel space of (harmonic) Dirichlet forms by
$$\Harmdepsom{q}:=\eDczom{q}\cap\eps^{-1}\cDzom{q}.$$
Throughout this paper, as already mentioned, we assume that always the best possible constants are chosen, e.g.,
$\Cteps{q}>0$ are defined by
\begin{align}
\mylabel{intromaxestelecdfconstdef}
\frac{1}{\Cteps{q}^2}
:=\min_{\omega}\frac{\norm{\ed\omega}_{\Ltom{q+1}}^2+\norm{\cd\eps\,\omega}_{\Ltom{q-1}}^2}{\norm{\omega}_{\Ltepsom{q}}^2},
\end{align}
where the minimum is taken over $\eDcom{q}\cap\eps^{-1}\cDom{q}\cap\Harmdepsom{q}^{\bot_{\Ltepsom{q}}}$.

The main result of this paper is Theorem \ref{maintheo}, i.e., for convex $\om$ and for all $q$ it holds
\begin{align}
\mylabel{introconstestdf}
\frac{\cf}{\epsh}
\leq\Cteps{q}
\leq\cp\epsh,\qquad
\cp\leq\frac{\diam(\om)}{\pi}.
\end{align}
Corollary \ref{maintheostar} shows that in the case of the other (normal) boundary condition, 
where the boundary condition is placed on $\eps^{-1}\cDcom{q}$
and the corresponding constant is denoted by $\Cneps{q}$,
the same result holds for $\Cneps{q}$ as well.
Especially for $\eps=\id$ we have for all $q$
\begin{align}
\mylabel{introconstestiddf}
\cpc=\Ct{0}=\Cn{N}\leq\Ct{q},\Cn{q}\leq\Ct{N}=\Cn{0}=\cp\leq\frac{\diam(\om)}{\pi}.
\end{align}
Here and generally throughout this contribution, we skip the index $\eps$ in our notations
if the case $\eps=\id$ is considered.
We emphasize that \eqref{introconstestdf} 
not only generalizes \eqref{introconstest} to $N$-dimensions,
but even improves \eqref{introconstest} in $3$-dimensions to
\begin{align}
\mylabel{introconstesttwo}
\frac{\cpc}{\epsh}\leq\cmteps,\cmneps\leq\cp\epsh.
\end{align}

In Remark \ref{nonconvexdomrem} we will present a corresponding result for a certain class of non-convex domains,
so-called one-chart or one-map domains, which are bi-Lipschitz transformations of convex domains.
By a standard partition of unity argument we obtain results for general weak Lipschitz domains as well.

To prove our main result \eqref{introconstestdf} we will only use 
\begin{itemize}
\item
the well-known Friedrichs/Gaffney regularity and estimate 
for bounded and convex $\ci$-smooth domains $\om\subset\rN$, i.e.,
$\eDcom{q}\cap\cDom{q}$ and $\eDom{q}\cap\cDcom{q}$ are subspaces of $\Hoom{q}$ and 
\begin{align}
\mylabel{friedrichsgaffneysmoothconvex}
\forall\,\omega&\in\big(\eDcom{q}\cap\cDom{q}\big)\cup\big(\eDom{q}\cap\cDcom{q}\big)
&
\normltom{\na\vec\omega}^2
&\leq\norm{\ed\omega}_{\Ltom{q+1}}^2
+\norm{\cd\omega}_{\Ltom{q-1}}^2,
\end{align}
\item
Weck's selection theorem \eqref{introallcompactembmaxtdf},
which includes Rellich's selection theorems as special cases $q=0$ or $q=N$, 
\item
and some fundamental results from functional analysis.
\end{itemize}
For the regularity part of \eqref{friedrichsgaffneysmoothconvex}
see also \cite{kuhnpaulyregmax}.

Using vector proxies for the respective differential forms 
we get back the classical case of vector fields in $\rt$ or $\rN$
for the special choice $q=1$ or $q=N-1$.
Note that without using differential forms and vector proxies
$\curl E$ of a smooth vector field $E$ in $\rN$ 
may be defined point-wise as a vector in $\reals^{(N-1)N/2}$, which is
isomorphic to the skew-symmetric part of the Jacobian of $E$, i.e.,
$$\curl E\,\hat{=}\,2\skw\na E=\na E-(\na E)^{\top}\in\rNtN.$$
Finally, \eqref{introconstestdf} and \eqref{introconstestiddf}
hold for \eqref{intromaxestelec} and \eqref{intromaxestmag} in $\rN$ as well.

\section{Preliminaries}

Throughout this paper let $\om\subset\rN$, $N\geq2$, be a bounded weak Lipschitz domain.
Hence Weck's selection theorem \eqref{introallcompactembmaxtdf} and
the Maxwell type estimate \eqref{intromaxestelecdf} hold true.

\subsection{Functional Analysis Toolbox}
\label{fatsec}

Let $\A\!:\!D(\A)\subset\H_{1}\to\H_{2}$ denote a
closed and densely defined linear operator on two Hilbert spaces $\H_{1}$ and $\H_{2}$
with Hilbert space adjoint $\As\!:\!D(\As)\subset\H_{2}\to\H_{1}$.
Typically, $\A$ and $\As$ are unbounded. The adjoint is characterized by
\begin{align}
\mylabel{adjointchar}
\forall\,x\in D(\A)\quad
\forall\,y\in D(\As)\qquad
\scp{\A x}{y}_{\H_{2}}=\scp{x}{\As y}_{\H_{1}}.
\end{align}
Note $(\As)^{*}=\ovl{\A}=\A$, i.e., $(\A,\As)$ is a dual pair.
This shows the trivial but helpful result
\begin{align}
\mylabel{AssAb}
D(\A)=D\big((\As)^{*}\big)
=\setb{x\in\H_{1}}{\exists\,f\in\H_{2}\;\forall\,y\in D(\As)\quad\scp{x}{\As y}_{\H_{1}}=\scp{f}{y}_{\H_{2}}}.
\end{align}
By the projection theorem the Helmholtz type decompositions
\begin{align}
\mylabel{helm}
\H_{1}=N(\A)\oplus_{\H_{1}}\ovl{R(\As)},\qquad
\H_{2}=N(\As)\oplus_{\H_{2}}\ovl{R(\A)}
\end{align}
hold, where we introduce the notation $N$ for the kernel (or null space)
and $R$ for the range of a linear operator
and $\oplus_{\H}$ denotes orthogonality in a Hilbert space $\H$.
We define the reduced operators
\begin{align*}
\cA&:=\A|_{\ovl{R(\As)}}:D(\cA)\subset\ovl{R(\As)}\to\ovl{R(\A)},&
D(\cA)&:=D(\A)\cap N(\A)^{\bot_{\H_{1}}}=D(\A)\cap\ovl{R(\As)},\\
\cAs&:=\As|_{\ovl{R(\A)}}:D(\cAs)\subset\ovl{R(\A)}\to\ovl{R(\As)},&
D(\cAs)&:=D(\As)\cap N(\As)^{\bot_{\H_{2}}}=D(\As)\cap\ovl{R(\A)},
\end{align*}
which are also closed and densely defined linear operators.
We note that $\cA$ and $\cAs$ are indeed adjoint to each other, i.e.,
$(\cA,\cAs)$ is a dual pair as well. Now the inverse operators 
$$\cA^{-1}:R(\A)\to D(\cA),\qquad
(\cAs)^{-1}:R(\As)\to D(\cAs)$$
exist and they are bijective, 
since $\cA$ and $\cAs$ are injective by definition.
Furthermore, by \eqref{helm} we have
the refined Helmholtz type decompositions
\begin{align}
\label{DacA}
D(\A)&=N(\A)\oplus_{\H_{1}}D(\cA),&
D(\As)&=N(\As)\oplus_{\H_{2}}D(\cAs)
\intertext{and thus we obtain for the ranges}
\label{RacA}
R(\A)&=R(\cA),&
R(\As)&=R(\cAs).
\end{align}

Using the closed range theorem and the closed graph theorem we get the following result.

\begin{lem}
\label{poincarerange}
The following assertions are equivalent:
\begin{itemize}
\item[\bf(i)] 
$\exists\,c_{\A}\,\,\in(0,\infty)$ \quad 
$\forall\,x\in D(\cA)$ \qquad
$\norm{x}_{\H_{1}}\leq c_{\A}\norm{\A x}_{\H_{2}}$
\item[\bf(i${}^{*}$)] 
$\exists\,c_{\As}\in(0,\infty)$ \quad 
$\forall\,y\in D(\cAs)$ \quad\;
$\norm{y}_{\H_{2}}\leq c_{\As}\norm{\As y}_{\H_{1}}$
\item[\bf(ii)] 
$R(\A)=R(\cA)$ is closed in $\H_{2}$.
\item[\bf(ii${}^{*}$)] 
$R(\As)=R(\cAs)$ is closed in $\H_{1}$.
\item[\bf(iii)] 
$\cA^{-1}:R(\A)\to D(\cA)$ is continuous and bijective
with norm bounded by $(1+c_{\A}^2)^{\oh}$.
\item[\bf(iii${}^{*}$)] 
$(\cAs)^{-1}:R(\As)\to D(\cAs)$ is continuous and bijective
with norm bounded by $(1+c_{\As}^2)^{\oh}$.
\end{itemize}
If one of these assertions holds true, e.g., (ii), $R(\A)=R(\cA)$ is closed, then 
\begin{align*}
\cA:D(\cA)
&\subset R(\As)\to R(\A),
&
D(\cA)
&=D(\A)\cap R(\As),\\
\cAs:D(\cAs)
&\subset R(\A)\to R(\As),
&
D(\cAs)
&=D(\As)\cap R(\A),
\intertext{and the Helmholtz type decompositions}
\H_{1}
&=N(\A)\oplus_{\H_{1}}R(\As),
&
\H_{2}
&=N(\As)\oplus_{\H_{2}}R(\A),\\
D(\A)
&=N(\A)\oplus_{\H_{1}}D(\cA),
&
D(\As)
&=N(\As)\oplus_{\H_{2}}D(\cAs)
\end{align*}
hold.
\end{lem}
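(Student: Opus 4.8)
The plan is to prove the equivalences as two symmetric triples, $(i)\Leftrightarrow(ii)\Leftrightarrow(iii)$ and $(i^{*})\Leftrightarrow(ii^{*})\Leftrightarrow(iii^{*})$, each referring only to $\A$ resp.\ $\As$, and then to bridge the two triples by the closed range theorem, which supplies $(ii)\Leftrightarrow(ii^{*})$ and thereby closes the logical cycle.

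For the first triple I would argue as follows. To show $(i)\Rightarrow(ii)$, I use that the estimate in $(i)$ makes $\cA$ bounded below: given $x_{n}\in D(\cA)$ with $\cA x_{n}=\A x_{n}\to g$ in $\H_{2}$, the estimate forces $(x_{n})$ to be Cauchy in $\H_{1}$, hence $x_{n}\to x$; since $\A$ is closed and $D(\cA)=D(\A)\cap\ovl{R(\As)}$ is stable under such limits (as $\ovl{R(\As)}$ is closed), we get $x\in D(\cA)$ and $\A x=g$, so $g\in R(\cA)=R(\A)$ by \eqref{RacA}, proving closedness. For $(ii)\Rightarrow(iii)$, note that $\cA\colon D(\cA)\to R(\A)$ is closed (a restriction of the closed $\A$ to the closed subspace $\ovl{R(\As)}$), injective by construction, and surjective onto $R(\A)=R(\cA)$; hence $\cA^{-1}\colon R(\A)\to D(\cA)$ is a well-defined, closed, everywhere-defined operator between Hilbert spaces, and the closed graph theorem yields its boundedness. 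Finally $(iii)\Rightarrow(i)$ is immediate, since $x=\cA^{-1}\A x$ for $x\in D(\cA)$ gives $\norm{x}_{\H_{1}}\leq c_{\A}\norm{\A x}_{\H_{2}}$ with $c_{\A}$ the $\H_{1}$-operator norm of $\cA^{-1}$.

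The constant bookkeeping in $(iii)$ is where I would be most careful. Writing $f=\A x$ for $x=\cA^{-1}f\in D(\cA)$, the estimate in $(i)$ reads $\norm{\cA^{-1}f}_{\H_{1}}\leq c_{\A}\norm{f}_{\H_{2}}$, and since $\A\cA^{-1}f=f$ the graph norm satisfies $\norm{\cA^{-1}f}_{D(\cA)}^2=\norm{\cA^{-1}f}_{\H_{1}}^2+\norm{f}_{\H_{2}}^2\leq(1+c_{\A}^2)\norm{f}_{\H_{2}}^2$, which is exactly the claimed bound $(1+c_{\A}^2)^{\oh}$ for $\cA^{-1}$ into the graph-normed domain. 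The starred triple is obtained verbatim by replacing $(\A,\cA,\H_{1},\H_{2})$ with $(\As,\cAs,\H_{2},\H_{1})$, using that $(\A,\As)$ is a dual pair so that $\As$ is itself closed and densely defined with the analogous reduced operator and identities \eqref{RacA}.

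It remains to connect the two triples and to read off the concluding decompositions. The bridge $(ii)\Leftrightarrow(ii^{*})$ is precisely the closed range theorem for the closed densely defined $\A$: $R(\A)$ is closed in $\H_{2}$ if and only if $R(\As)$ is closed in $\H_{1}$. This single invocation is the only genuinely nontrivial input and hence the main obstacle, everything else being bounded-below and closed-graph bookkeeping; it merges the two chains into one equivalence class. Once any, and hence every, assertion holds, $R(\A)$ and $R(\As)$ are closed, so the closures may be dropped: substituting $\ovl{R(\A)}=R(\A)$ and $\ovl{R(\As)}=R(\As)$ into the already-established identities \eqref{helm}, \eqref{DacA}, \eqref{RacA} gives the stated mapping properties of $\cA,\cAs$ and the four Helmholtz type decompositions, completing the proof.
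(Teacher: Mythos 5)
Your proposal is correct and follows exactly the route the paper indicates (it states the lemma without a written proof, attributing it to the closed range theorem and the closed graph theorem, which are precisely the two ingredients you deploy, together with the standard bounded-below/Cauchy-sequence argument and the closedness of $D(\cA)=D(\A)\cap\ovl{R(\As)}$ under graph limits). The constant bookkeeping via the graph norm, $\norm{\cA^{-1}f}_{D(\cA)}^2=\norm{\cA^{-1}f}_{\H_{1}}^2+\norm{f}_{\H_{2}}^2\leq(1+c_{\A}^2)\norm{f}_{\H_{2}}^2$, matches the stated bound, so nothing is missing.
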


Throughout this paper we will assume that always the ``best'' Friedrichs/Poincar\'e type constants are chosen, i.e.,
$c_{\A},c_{\As}\in(0,\infty]$ are given by the usual Rayleigh quotients 
$$\frac{1}{c_{\A}}
:=\inf_{0\neq x\in D(\cA)}\frac{\norm{\A x}_{\H_{2}}}{\norm{x}_{\H_{1}}},\qquad
\frac{1}{c_{\As}}
:=\inf_{0\neq y\in D(\cAs)}\frac{\norm{\As y}_{\H_{1}}}{\norm{y}_{\H_{2}}}.$$

\begin{lem}
\label{lemconstants}
The Friedrichs/Poincar\'e type constants coincide, i.e., $c_{\A}=c_{\As}\in(0,\infty]$.
\end{lem}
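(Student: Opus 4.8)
The plan is to reduce the claim to the closed-range setting of Lemma~\ref{poincarerange} and then exploit that the reduced operators $\cA,\cAs$ form a dual pair. First I would observe that both constants automatically lie in $(0,\infty]$: for any fixed $0\neq x\in D(\cA)$ the quotient $\norm{\A x}_{\H_{2}}/\norm{x}_{\H_{1}}$ is finite, so the infimum defining $1/c_{\A}$ is finite and hence $c_{\A}>0$; the same reasoning gives $c_{\As}>0$. By the equivalence of (i) and (i${}^{*}$) in Lemma~\ref{poincarerange}, a finite Friedrichs/Poincar\'e constant exists for $\A$ if and only if one exists for $\As$; equivalently $c_{\A}<\infty$ holds if and only if $c_{\As}<\infty$. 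Thus either $c_{\A}=c_{\As}=\infty$, in which case there is nothing to prove, or both constants are finite and all ranges $R(\A)=R(\cA)$ and $R(\As)=R(\cAs)$ are closed.

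In the finite case I would rewrite the two constants as operator norms of the reduced inverses. Since $\cA$ is injective with $\cA x=\A x$ for $x\in D(\cA)$, every element of $D(\cA)$ has the form $\cA^{-1}f$ with $f=\A x\in R(\A)$ ranging over all of $R(\A)$, and $\norm{\A x}_{\H_{2}}=0$ forces $x\in N(\A)\cap\ovl{R(\As)}=\{0\}$ by \eqref{helm}. Therefore
\begin{align*}
c_{\A}=\sup_{0\neq x\in D(\cA)}\frac{\norm{x}_{\H_{1}}}{\norm{\A x}_{\H_{2}}}
=\sup_{0\neq f\in R(\A)}\frac{\norm{\cA^{-1}f}_{\H_{1}}}{\norm{f}_{\H_{2}}}
=\norm{\cA^{-1}},
\end{align*}
the operator norm of the bounded bijection $\cA^{-1}\colon R(\A)\to\ovl{R(\As)}$, and symmetrically $c_{\As}=\norm{(\cAs)^{-1}}$ with $(\cAs)^{-1}\colon R(\As)\to\ovl{R(\A)}$.

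The decisive step is then the identity $(\cA^{-1})^{*}=(\cAs)^{-1}$. Because $(\cA,\cAs)$ is a dual pair, $\cAs$ is the Hilbert space adjoint of $\cA$ on the spaces $\ovl{R(\As)}$ and $\ovl{R(\A)}$, and since in the closed-range case $\cA^{-1}$ is a bounded operator defined on all of $\ovl{R(\A)}=R(\A)$, the standard rule $(\cA^{-1})^{*}=(\cA^{*})^{-1}=(\cAs)^{-1}$ applies. Invoking the norm-preservation of the adjoint, $\norm{S}=\norm{S^{*}}$ with $S=\cA^{-1}$, I obtain
\begin{align*}
c_{\A}=\norm{\cA^{-1}}=\norm{(\cA^{-1})^{*}}=\norm{(\cAs)^{-1}}=c_{\As}.
\end{align*}
I expect the only genuinely delicate point to be the justification of $(\cA^{-1})^{*}=(\cAs)^{-1}$: one must confirm that the reduced operator indeed has $\cAs$ as its Hilbert space adjoint (as asserted when $(\cA,\cAs)$ was declared a dual pair) and that the adjoint-of-the-inverse rule is applicable, which hinges precisely on the boundedness and everywhere-definedness of $\cA^{-1}$ guaranteed by the closed range. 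All remaining manipulations are routine.
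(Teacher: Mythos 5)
Your argument is correct. The paper states Lemma \ref{lemconstants} in its functional-analysis toolbox without giving a proof, so there is nothing to compare line by line; but your reduction is sound: the equivalence (i)$\Leftrightarrow$(i${}^{*}$) of Lemma \ref{poincarerange} correctly disposes of the case $c_{\A}=c_{\As}=\infty$, the identification $c_{\A}=\norm{\cA^{-1}}$ is right (injectivity of $\cA$ via $N(\A)\cap\ovl{R(\As)}=\{0\}$ guarantees the quotient is well defined), and the identity $(\cA^{-1})^{*}=(\cAs)^{-1}$ does hold: for $g=\cAs y\in R(\cAs)$ and $f=\cA x\in R(\cA)$ one has $\scp{\cA^{-1}f}{g}_{\H_{1}}=\scp{x}{\cAs y}_{\H_{1}}=\scp{\cA x}{y}_{\H_{2}}=\scp{f}{y}_{\H_{2}}$, so the adjoint of the bounded, densely-ranged map $\cA^{-1}\colon R(\A)\to R(\As)$ is exactly $(\cAs)^{-1}$, and $\norm{S}=\norm{S^{*}}$ finishes the proof. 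The one point you flagged as delicate is thus fine, though note $\cA^{-1}$ is not surjective onto $R(\As)$ (its range is only the dense subspace $D(\cA)$), so the rule you invoke is really ``adjoint of a bounded injection with dense range,'' not ``adjoint of a bounded bijection''; the computation above makes this harmless. For comparison, the argument that is standard in this toolbox literature avoids adjoints of inverses entirely: given $0\neq y\in D(\cAs)\subset R(\A)$, write $y=\A x$ with $x\in D(\cA)$ and compute
\begin{align*}
\norm{y}_{\H_{2}}^2=\scp{\A x}{y}_{\H_{2}}=\scp{x}{\As y}_{\H_{1}}\leq\norm{x}_{\H_{1}}\norm{\As y}_{\H_{1}}\leq c_{\A}\norm{\A x}_{\H_{2}}\norm{\As y}_{\H_{1}}=c_{\A}\norm{y}_{\H_{2}}\norm{\As y}_{\H_{1}},
\end{align*}
which gives $c_{\As}\leq c_{\A}$, and symmetry gives the reverse inequality. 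This is shorter and sidesteps the operator-norm machinery, but your version buys a cleaner conceptual statement ($c_{\A}$ \emph{is} the norm of the reduced inverse, consistent with Lemma \ref{cptequi}(ii)), so both routes are legitimate.
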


\begin{lem}
\mylabel{cptequi}
The following assertions are equivalent:
\begin{itemize}
\item[\bf(i)] 
$D(\cA)\cptemb\H_{1}$ is compact.
\item[\bf(i${}^{*}$)] 
$D(\cAs)\cptemb\H_{2}$ is compact.
\item[\bf(ii)] 
$\cA^{-1}:R(\A)\to R(\As)$ is compact
with norm $c_{\A}$.
\item[\bf(ii${}^{*}$)] 
$(\cAs)^{-1}:R(\As)\to R(\A)$ is compact
with norm $c_{\As}=c_{\A}$.
\end{itemize}
If one of these assertions holds true, e.g., (i), $D(\cA)\cptemb\H_{1}$ is compact,
then (by a standard indirect argument showing Lemma \ref{poincarerange} (i)) the assertions of the latter two lemmas hold.
Especially, the Friedrichs/Poincar\'e type estimates hold,
all ranges are closed and the inverse operators are compact.
\end{lem}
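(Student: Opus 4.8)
The plan is to prove Lemma \ref{cptequi} by exploiting the abstract duality between $\cA$ and $\cAs$ together with the characterizations already assembled in Lemma \ref{poincarerange} and Lemma \ref{lemconstants}. First I would establish the symmetry of the statement: since $(\cA,\cAs)$ is a dual pair and $(\cAs)^{*}=\cA$, any argument proving $\text{(i)}\Rightarrow\text{(ii)}$ for $\cA$ applies verbatim to $\cAs$, yielding $\text{(i}^{*}\text{)}\Rightarrow\text{(ii}^{*}\text{)}$. Thus it suffices to prove the equivalence $\text{(i)}\Leftrightarrow\text{(ii)}$ and the equivalence $\text{(i)}\Leftrightarrow\text{(i}^{*}\text{)}$, from which the remaining implications follow by duality.

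\medskip

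For $\text{(i)}\Rightarrow\text{(ii)}$, I would first observe that compactness of the embedding $D(\cA)\cptemb\H_{1}$ forces, by a standard indirect (contradiction) argument, the Poincar\'e estimate Lemma \ref{poincarerange} (i): if it failed there would be a sequence $x_{n}\in D(\cA)$ with $\norm{x_{n}}_{\H_{1}}=1$ and $\norm{\A x_{n}}_{\H_{2}}\to0$; compactness extracts a subsequence converging in $\H_{1}$ to some $x$, and since $\A x_{n}\to0$ the graph-closedness of $\cA$ gives $x\in D(\cA)$ with $\cA x=0$, hence $x=0$ (as $\cA$ is injective), contradicting $\norm{x}_{\H_{1}}=1$. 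By Lemma \ref{poincarerange} this already yields that $R(\A)=R(\cA)$ is closed and that $\cA^{-1}\colon R(\A)\to D(\cA)$ is continuous. To upgrade continuity of $\cA^{-1}$ to compactness as a map into $\H_{1}$ (equivalently $R(\As)$), I would factor $\cA^{-1}$ as the composition $R(\A)\xrightarrow{\cA^{-1}} D(\cA)\hookrightarrow\H_{1}$, where the first arrow is bounded (into the graph norm) and the second is compact by hypothesis (i); the composition of a bounded operator with a compact one is compact. The norm identity $\norm{\cA^{-1}}=c_{\A}$ is immediate from the definition of $c_{\A}$ as the reciprocal of the infimum of the relevant Rayleigh quotient.

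\medskip

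For $\text{(ii)}\Rightarrow\text{(i)}$, I would run the factorization in reverse: given that $\cA^{-1}$ is compact, the identity on $D(\cA)$ regarded as a map into $\H_{1}$ coincides with $\cA^{-1}\circ\cA$ on $D(\cA)$, and since $\cA\colon D(\cA)\to R(\A)$ is bounded in the graph norm while $\cA^{-1}$ is compact, their composition is compact, which is precisely the statement that the embedding $D(\cA)\cptemb\H_{1}$ is compact. The cross-equivalence $\text{(i)}\Leftrightarrow\text{(i}^{*}\text{)}$ is the step I expect to carry the real content. I would derive it from Schauder's theorem: by $\text{(i)}\Leftrightarrow\text{(ii)}$ compactness of the embedding is equivalent to compactness of $\cA^{-1}$, and by Schauder's theorem $\cA^{-1}$ is compact if and only if its adjoint is compact. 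The adjoint of $\cA^{-1}$ is $(\cAs)^{-1}$ (since $(\cA,\cAs)$ is a dual pair and inversion commutes with adjunction on the reduced operators), so compactness of $\cA^{-1}$ is equivalent to compactness of $(\cAs)^{-1}$, which by the dual form of $\text{(i)}\Leftrightarrow\text{(ii)}$ is equivalent to $\text{(i}^{*}\text{)}$. The equality of norms $c_{\As}=c_{\A}$ is then supplied by Lemma \ref{lemconstants}. The main obstacle, and the point requiring the most care, is the clean identification of $(\cA^{-1})^{*}$ with $(\cAs)^{-1}$; once the reduced operators are understood as mutually adjoint bijections between the closed ranges $R(\A)$ and $R(\As)$, Schauder's theorem does the rest, and the final sentence of the statement follows because any one of (i)--(ii$^{*}$) produces Lemma \ref{poincarerange} (i), whence all conclusions of Lemma \ref{poincarerange} and Lemma \ref{lemconstants} hold.
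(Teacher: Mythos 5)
Your proof is correct. The paper states Lemma \ref{cptequi} without proof, as a standard functional-analysis fact (hinting only at ``a standard indirect argument''), and your argument --- the indirect/contradiction argument for the Friedrichs/Poincar\'e estimate, the factorization of $\cA^{-1}$ through the compact embedding $D(\cA)\cptemb\H_{1}$ and its reversal, and Schauder's theorem applied to the mutually adjoint bounded bijections $\cA^{-1}$ and $(\cAs)^{-1}$ between the closed ranges --- is exactly the standard route the author intends and fills the gap correctly.
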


Now, let $\Az\!:\!D(\Az)\subset\H_{0}\to\H_{1}$ and $\Ao\!:\!D(\Ao)\subset\H_{1}\to\H_{2}$ 
be (possibly unbounded) closed and densely defined linear operators 
on three Hilbert spaces $\H_{0}$, $\H_{1}$, and $\H_{2}$
with adjoints $\Azs\!:\!D(\Azs)\subset\H_{1}\to\H_{0}$
and $\Aos\!:\!D(\Aos)\subset\H_{2}\to\H_{1}$
as well as reduced operators $\cAz$, $\cAzs$, and $\cAo$, $\cAos$.
Furthermore, we assume the sequence or complex property of $\Az$ and $\Ao$, 
that is, $\Ao\Az\subset0$, i.e.,
\begin{align}
\mylabel{sequenceprop}
R(\Az)\subset N(\Ao).
\end{align}
Then also $\Azs\Aos\subset0$, i.e., $R(\Aos)\subset N(\Azs)$,
as for all $x\in D(\Az)$, $y\in R(\Aos)$ with $y=\Aos z$, $z\in D(\Aos)$
$$\scp{y}{\Az x}_{\H_{1}}
=\scp{\Aos z}{\Az x}_{\H_{1}}
=\scp{z}{\Ao\Az x}_{\H_{2}}
=0.$$
The Helmholtz type decompositions \eqref{helm} for $\A=\Az$ and $\A=\Ao$ read, e.g.,
\begin{align}
\label{helmappclone}
\H_{1}&=\ovl{R(\Az)}\oplus_{\H_{1}}N(\Azs),
&
\H_{1}&=N(\Ao)\oplus_{\H_{1}}\ovl{R(\Aos)},
\intertext{and by the complex properties \eqref{sequenceprop} we obtain}
\nonumber
D(\Ao)&=\ovl{R(\Az)}\oplus_{\H_{1}}\big(D(\Ao)\cap N(\Azs)\big),
&
D(\Azs)&=\big(D(\Azs)\cap N(\Ao)\big)\oplus_{\H_{1}}\ovl{R(\Aos)},\\
\nonumber
N(\Ao)&=\ovl{R(\Az)}\oplus_{\H_{1}}N_{0,1},
&
N(\Azs)&=N_{0,1}\oplus_{\H_{1}}\ovl{R(\Aos)},
\end{align}
where we define the cohomology group
$$N_{0,1}:=N(\Ao)\cap N(\Azs).$$
Putting things together, the general refined Helmholtz type decomposition
\begin{align}
\label{helmappfullcl}
\H_{1}=\ovl{R(\Az)}\oplus_{\H_{1}}N_{0,1}\oplus_{\H_{1}}\ovl{R(\Aos)},\qquad
R(\Az)=R(\cAz),\quad
R(\Aos)=R(\cAos)
\end{align}
holds. The previous results of this section imply immediately the following.

\begin{lem}
\label{helmrefined}
Let $\Az$, $\Ao$ be as introduced before with $\Ao\Az\subset0$, i.e., \eqref{sequenceprop}.
Moreover, let $R(\Az)$ and $R(\Ao)$ be closed.
Then, the assertions of Lemma \ref{poincarerange} and Lemma \ref{lemconstants}
hold for $\Az$ and $\Ao$. Moreover, the refined Helmholtz type decompositions
\begin{align*}
\H_{1}
&=R(\Az)\oplus_{\H_{1}}N_{0,1}\oplus_{\H_{1}}R(\Aos),
\\
N(\Ao)
&=R(\Az)\oplus_{\H_{1}}N_{0,1},
&
N(\Azs)
&=N_{0,1}\oplus_{\H_{1}}R(\Aos),\\
D(\Ao)
&=R(\Az)\oplus_{\H_{1}}N_{0,1}\oplus_{\H_{1}}D(\cAo),
&
D(\Azs)
&=D(\cAzs)\oplus_{\H_{1}}N_{0,1}\oplus_{\H_{1}}R(\Aos),\\
D(\Ao)\cap D(\Azs)
&=D(\cAzs)\oplus_{\H_{1}}N_{0,1}\oplus_{\H_{1}}D(\cAo)
\end{align*}
hold. Especially, 
\begin{align*}
R(\Az)
&=N(\Ao)\cap N_{0,1}^{\bot_{\H_{1}}},
&
R(\Azs)
&,
&
R(\Ao)
&,
&
R(\Aos)
&=N(\Azs)\cap N_{0,1}^{\bot_{\H_{1}}}
\end{align*} 
are closed, the respective inverse operators, i.e.,
\begin{align*}
\cAz^{-1}&:R(\Az)\to D(\cAz),
&
\cAo^{-1}&:R(\Ao)\to D(\cAo),\\
(\cAzs)^{-1}&:R(\Azs)\to D(\cAzs),
&
(\cAos)^{-1}&:R(\Aos)\to D(\cAos),
\end{align*}
are continuous, and there exist positive constants $c_{\Az}$, $c_{\Ao}$,
such that the Friedrichs/Poincar\'e type estimates
\begin{align*}
\forall\,x&\in D(\cAz)
&
\norm{x}_{\H_{0}}&\leq c_{\Az}\norm{\Az x}_{\H_{1}},
&
\forall\,y&\in D(\cAo)
&
\norm{y}_{\H_{1}}&\leq c_{\Ao}\norm{\Ao y}_{\H_{2}},\\
\forall\,y&\in D(\cAzs)
&
\norm{y}_{\H_{1}}&\leq c_{\Az}\norm{\Azs y}_{\H_{0}},
&
\forall\,z&\in D(\cAos)
&
\norm{z}_{\H_{2}}&\leq c_{\Ao}\norm{\Aos z}_{\H_{1}}
\end{align*}
hold.
\end{lem}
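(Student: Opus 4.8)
The plan is to treat this as an assembly result: all of the substance is already contained in Lemma \ref{poincarerange} and Lemma \ref{lemconstants} together with the refined decompositions \eqref{helmappclone}--\eqref{helmappfullcl}, so the proof consists of applying those to $\Az$ and to $\Ao$ separately and then combining the outputs via the complex property \eqref{sequenceprop}.

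First I would observe that the hypothesis ``$R(\Az)$ closed'' is precisely assertion (ii) of Lemma \ref{poincarerange} for the operator $\A=\Az$, and likewise ``$R(\Ao)$ closed'' is assertion (ii) for $\A=\Ao$. Invoking Lemma \ref{poincarerange} for each of these two operators therefore yields at once: the Poincar\'e type estimates (i) and (i${}^{*}$) for $\Az$ and for $\Ao$, hence for all four operators $\Az,\Azs,\Ao,\Aos$; the closedness of the adjoint ranges $R(\Azs)$ and $R(\Aos)$; the continuity and bijectivity of the four reduced inverses $\cAz^{-1}$, $(\cAzs)^{-1}$, $\cAo^{-1}$, $(\cAos)^{-1}$; and the identifications $D(\cAzs)=D(\Azs)\cap R(\Az)$ and $D(\cAo)=D(\Ao)\cap R(\Aos)$ from the concluding part of that lemma. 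Lemma \ref{lemconstants} then gives $c_{\Az}=c_{\Azs}$ and $c_{\Ao}=c_{\Aos}$, so the four stated Friedrichs/Poincar\'e estimates follow immediately after substituting $c_{\Azs}=c_{\Az}$ and $c_{\Aos}=c_{\Ao}$. This already disposes of the claims on closed ranges, continuous inverses, and the existence of the constants.

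Next I would upgrade the decompositions \eqref{helmappclone}--\eqref{helmappfullcl} by dropping the closure bars, which is now legitimate since $R(\Az)$, $R(\Aos)$ (and $R(\Azs)$, $R(\Ao)$) are closed. This turns \eqref{helmappfullcl} into $\H_{1}=R(\Az)\oplus_{\H_1}N_{0,1}\oplus_{\H_1}R(\Aos)$ and gives the two kernel decompositions $N(\Ao)=R(\Az)\oplus_{\H_1}N_{0,1}$ and $N(\Azs)=N_{0,1}\oplus_{\H_1}R(\Aos)$. For the domain decompositions I would insert the characterizations $D(\cAo)=D(\Ao)\cap R(\Aos)$ and $D(\cAzs)=D(\Azs)\cap R(\Az)$ into the (now closure-free) decompositions of $D(\Ao)$ and $D(\Azs)$ from \eqref{helmappclone}; since $N_{0,1}=N(\Ao)\cap N(\Azs)$ lies in both domains, this yields $D(\Ao)=R(\Az)\oplus_{\H_1}N_{0,1}\oplus_{\H_1}D(\cAo)$ and $D(\Azs)=D(\cAzs)\oplus_{\H_1}N_{0,1}\oplus_{\H_1}R(\Aos)$. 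Intersecting these two and using the mutual orthogonality of the three summands $R(\Az)$, $N_{0,1}$, $R(\Aos)$, together with $D(\cAzs)\subset R(\Az)$ and $D(\cAo)\subset R(\Aos)$, forces the $R(\Az)$-component into $D(\cAzs)$ and the $R(\Aos)$-component into $D(\cAo)$, giving $D(\Ao)\cap D(\Azs)=D(\cAzs)\oplus_{\H_1}N_{0,1}\oplus_{\H_1}D(\cAo)$.

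Finally I would read off the range identities from the kernel decompositions: since $N_{0,1}\subset N(\Ao)$, orthogonality in $N(\Ao)=R(\Az)\oplus_{\H_1}N_{0,1}$ gives $R(\Az)=N(\Ao)\cap N_{0,1}^{\bot_{\H_1}}$, and symmetrically $R(\Aos)=N(\Azs)\cap N_{0,1}^{\bot_{\H_1}}$; the closedness of all four ranges was already noted above. I do not expect any genuine obstacle here, as the whole argument is the orchestration of previously proven facts; the only point requiring a little care is the intersection step for $D(\Ao)\cap D(\Azs)$, where one must keep track of which summand each component is constrained to lie in.
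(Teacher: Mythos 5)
Your proposal is correct and coincides with the paper's own (implicit) argument: the paper offers no separate proof, stating only that ``the previous results of this section imply immediately'' the lemma, and your assembly -- applying Lemma \ref{poincarerange} and Lemma \ref{lemconstants} to $\Az$ and $\Ao$ separately via the closed-range hypothesis, dropping the closure bars in \eqref{helmappclone}--\eqref{helmappfullcl}, and sorting the components of $D(\Ao)\cap D(\Azs)$ by orthogonality -- is exactly that intended chain of deductions. No gaps.
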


\begin{rem}
\label{clrangecompembrem}
If, e.g., $D(\cAz)\cptemb\H_{0}$ and $D(\cAo)\cptemb\H_{1}$ are compact,
then $R(\Az)$ and $R(\Ao)$ are closed and hence the assertions of Lemma \ref{helmrefined} hold.
Moreover, the respective inverse operators, i.e.,
\begin{align*}
\cAz^{-1}&:R(\Az)\to R(\Azs),
&
\cAo^{-1}&:R(\Ao)\to R(\Aos),\\
(\cAzs)^{-1}&:R(\Azs)\to R(\Az),
&
(\cAos)^{-1}&:R(\Aos)\to R(\Ao),
\end{align*}
are compact.
\end{rem}

By the complex property we observe
$D(\cAo),D(\cAzs)\subset D(\Ao)\cap D(\Azs)$.
Utilizing the Helmholtz type decomposition \eqref{helmappfullcl} we immediately see the following. 

\begin{lem}
\label{compemblem}
The embeddings 
$D(\cAz)\cptemb\H_{0}$, $D(\cAo)\cptemb\H_{1}$,
and $N_{0,1}\cptemb\H_{1}$ are compact, if and only if the embedding
$D(\Ao)\cap D(\Azs)\cptemb\H_{1}$ is compact.
In this case, $N_{0,1}$ has finite dimension.
\end{lem}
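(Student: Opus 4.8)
The plan is to read everything off the orthogonal Helmholtz decomposition \eqref{helmappfullcl} of $\H_{1}$ and its restriction to the intersected domain. First I would record the structural identity
\begin{align*}
D(\Ao)\cap D(\Azs)=D(\cAzs)\oplus_{\H_{1}}N_{0,1}\oplus_{\H_{1}}D(\cAo),
\end{align*}
which I claim holds with no closedness hypothesis, since \eqref{helmappfullcl} is itself derived only from the projection theorem and the complex property \eqref{sequenceprop}. To check it, I decompose $x=x_{R}+x_{N}+x_{R^{*}}$ according to $\H_{1}=\ovl{R(\Az)}\oplus_{\H_{1}}N_{0,1}\oplus_{\H_{1}}\ovl{R(\Aos)}$ and use $\ovl{R(\Az)}\subset N(\Ao)$ and $\ovl{R(\Aos)}\subset N(\Azs)$ together with $N_{0,1}\subset N(\Ao)\cap N(\Azs)$. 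Then $\Ao x=\Ao x_{R^{*}}$ and $\Azs x=\Azs x_{R}$, so membership $x\in D(\Ao)\cap D(\Azs)$ forces $x_{R^{*}}\in D(\Ao)\cap\ovl{R(\Aos)}=D(\cAo)$ and $x_{R}\in D(\Azs)\cap\ovl{R(\Az)}=D(\cAzs)$, while $x_{N}\in N_{0,1}$ is automatic.

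The decisive point, which I would isolate next, is that this decomposition is orthogonal not only in $\H_{1}$ but in the full graph norm: for $x$ as above,
\begin{align*}
\norm{x}_{\H_{1}}^2+\norm{\Ao x}_{\H_{2}}^2+\norm{\Azs x}_{\H_{0}}^2
=\big(\norm{x_{R}}_{\H_{1}}^2+\norm{\Azs x_{R}}_{\H_{0}}^2\big)+\norm{x_{N}}_{\H_{1}}^2+\big(\norm{x_{R^{*}}}_{\H_{1}}^2+\norm{\Ao x_{R^{*}}}_{\H_{2}}^2\big),
\end{align*}
so the graph norm of $D(\Ao)\cap D(\Azs)$ splits exactly as the orthogonal sum of the graph norms on $D(\cAzs)$, $N_{0,1}$, $D(\cAo)$. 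With this in hand both implications become bookkeeping. For the ``if'' direction I observe that each of the three summands is a closed subspace of $D(\Ao)\cap D(\Azs)$, so compactness of $D(\Ao)\cap D(\Azs)\cptemb\H_{1}$ restricts to give $D(\cAzs)\cptemb\H_{1}$, $N_{0,1}\cptemb\H_{1}$, and $D(\cAo)\cptemb\H_{1}$; converting $D(\cAzs)\cptemb\H_{1}$ into $D(\cAz)\cptemb\H_{0}$ through the equivalence (i)$\Leftrightarrow$(i${}^{*}$) of Lemma \ref{cptequi} (applied to the dual pair $(\Az,\Azs)$) yields the claim. For the ``only if'' direction I take a graph-bounded sequence in $D(\Ao)\cap D(\Azs)$, split it by the identity above into three graph-bounded sequences in the summands, extract $\H_{1}$-convergent subsequences from each using the three given compact embeddings (again passing $D(\cAz)\cptemb\H_{0}$ to $D(\cAzs)\cptemb\H_{1}$ via Lemma \ref{cptequi}), and recombine; orthogonality in $\H_{1}$ guarantees that the reassembled sequence converges.

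Finally, the finite-dimensionality of $N_{0,1}$ is immediate: on $N_{0,1}$ the graph norm reduces to $\norm{\,\cdot\,}_{\H_{1}}$, so $N_{0,1}\cptemb\H_{1}$ being compact says exactly that the identity on the Hilbert space $N_{0,1}$ is compact, whence its closed unit ball is compact and $\dim N_{0,1}<\infty$ by Riesz's theorem. I expect the only genuinely delicate step to be the verification of the graph-norm orthogonality displayed above; everything else is manipulation of orthogonal projections. In particular, one must be careful that the \emph{closures} $\ovl{R(\Az)}$ and $\ovl{R(\Aos)}$, not merely the ranges, lie in $N(\Ao)$ and $N(\Azs)$ respectively, as this is precisely what makes the cross terms $\Ao x_{R}$, $\Ao x_{N}$, $\Azs x_{R^{*}}$, $\Azs x_{N}$ all vanish and lets the proof proceed without any closed-range assumption.
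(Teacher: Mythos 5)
Your argument is correct and is precisely the route the paper intends: the paper gives no written proof beyond the remark that $D(\cAo),D(\cAzs)\subset D(\Ao)\cap D(\Azs)$ and that the claim follows ``immediately'' from the Helmholtz type decomposition \eqref{helmappfullcl}, and your graph-norm-orthogonal splitting $D(\Ao)\cap D(\Azs)=D(\cAzs)\oplus_{\H_{1}}N_{0,1}\oplus_{\H_{1}}D(\cAo)$ together with Lemma \ref{cptequi} (i)$\Leftrightarrow$(i${}^{*}$) is exactly the omitted bookkeeping. Your emphasis that the closed kernels absorb the \emph{closures} of the ranges, so that no closed-range assumption is needed, is the right point to isolate.
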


\begin{rem}
\label{complexone}
Let us consider the sequence or complex
\begin{align}
\mylabel{primalcomplex}
\begin{CD}
D(\Az)\subset\H_{0} @> \Az >>
D(\Ao)\subset\H_{1} @> \Ao >>
\H_{2}.
\end{CD}
\end{align}
\begin{itemize}
\item[\bf(i)]
The general assumptions on $\Az$ and $\Ao$ are equivalent to the assumption that
\eqref{primalcomplex} is a Hilbert complex, meaning that the operators are closed 
and satisfy the complex property \eqref{sequenceprop}. 
\item[\bf(ii)]
The assumption that the ranges $R(\Az)$ and $R(\Ao)$ are closed is equivalent to the assumption that
\eqref{primalcomplex} is a closed Hilbert complex. 
\item[\bf(iii)]
The assumption that the embeddings $D(\cAz)\cptemb\H_{0}$ and $D(\cAo)\cptemb\H_{1}$ are compact 
is equivalent to the assumption that \eqref{primalcomplex} is a compact Hilbert complex,
which is always closed. 
\item[\bf(iv)]
The assumption that the embedding $D(\Ao)\cap D(\Azs)\cptemb\H_{1}$ is compact is equivalent to the assumption that
\eqref{primalcomplex} is a Fredholm complex, meaning that the complex is compact 
and the cohomology group $N_{0,1}$ is finite dimensional.
\end{itemize}
The strongest property (iv) is the most desirable one,
and we can realize this is our applications.
By the previous results, any property of the primal complex \eqref{primalcomplex} 
is transferred to the corresponding property of the dual complex
$$\begin{CD}
\H_{0} @< \Azs <<
D(\Azs)\subset\H_{1} @< \Aos <<
D(\Aos)\subset\H_{2}
\end{CD}$$
and vise verse.
\end{rem}

We can summarize.

\begin{theo}
\label{fatbmaintheogen}
Let $\Az$, $\Ao$ be as introduced, i.e., having the complex property $R(\Az)\subset N(\Ao)$. 
Moreover, let $D(\Ao)\cap D(\Azs)\cptemb\H_{1}$ be compact.
Then the assertions of Lemma \ref{helmrefined} hold, $N_{0,1}$ is finite dimensional
and the corresponding inverse operators are continuous resp. compact.
Especially, all ranges are closed and the corresponding Friedrichs/Poincar\'e type estimates hold.
\end{theo}

\begin{theo}
\label{constesthilbert}
Let $\Az$, $\Ao$ be as introduced, i.e., having the complex property $R(\Az)\subset N(\Ao)$,
and let $D(\Ao)\cap D(\Azs)\cptemb\H_{1}$ be compact. Then
$$\forall\,x\in D(\Ao)\cap D(\Azs)\cap N_{0,1}^{\perp_{\H_{1}}}\qquad
\norm{x}_{\H_{1}}^2
\leq c_{\Az}^2\norm{\Azs x}_{\H_{0}}^2
+c_{\Ao}^2\norm{\Ao x}_{\H_{2}}^2.$$
Especially,
$$\forall\,x\in D(\Ao)\cap D(\Azs)\cap N_{0,1}^{\perp_{\H_{1}}}\qquad
\norm{x}_{\H_{1}}
\leq\max\{c_{\Az},c_{\Ao}\}
\big(\norm{\Azs x}_{\H_{0}}^2
+\norm{\Ao x}_{\H_{2}}^2\big)^{\oh}.$$
\end{theo}

\begin{proof}
Let $x\in D(\Ao)\cap D(\Azs)\cap N_{0,1}^{\perp_{\H_{1}}}$.
By the Helmholtz type decomposition of Lemma \ref{helmrefined} we have 
$$D(\Ao)\cap D(\Azs)\cap N_{0,1}^{\perp_{\H_{1}}}=D(\cAzs)\oplus_{\H_{1}}D(\cAo)$$
and hence we can decompose
$$x=x_{0}+x_{1}\in D(\cAzs)\oplus_{\H_{1}}D(\cAo),\qquad
\Azs x=\Azs x_{0},\quad
\Ao x=\Ao x_{1}.$$
By orthogonality and the Friedrichs/Poincar\'e type estimates we get
\begin{align*}
\norm{x}_{\H_{1}}^2
=\norm{x_{0}}_{\H_{1}}^2
+\norm{x_{1}}_{\H_{1}}^2
\leq c_{\Az}^2\norm{\Azs x_{0}}_{\H_{0}}^2
+c_{\Ao}^2\norm{\Ao x_{1}}_{\H_{2}}^2
=c_{\Az}^2\norm{\Azs x}_{\H_{0}}^2
+c_{\Ao}^2\norm{\Ao x}_{\H_{2}}^2,
\end{align*}
completing the proof.
\end{proof}

\begin{rem}
\label{constesthilbertrem}
In Theorem \ref{constesthilbert}
$\max\{c_{\Az},c_{\Ao}\}=c_{\Az,\Ao}$ is the best constant (or sharp), where
$$\frac{1}{c_{\Az,\Ao}^2}
:=\inf_{0\neq x\in D(\Ao)\cap D(\Azs)\cap N_{0,1}^{\perp_{\H_{1}}}}
\frac{\norm{\Azs x}_{\H_{0}}^2+\norm{\Ao x}_{\H_{2}}^2}{\norm{x}_{\H_{1}}^2}.$$
It is clear that $c_{\Az,\Ao}\leq\max\{c_{\Az},c_{\Ao}\}$ holds by Theorem \ref{constesthilbert}.
On the other hand, looking at the subspaces (ranges) of the Helmholtz type decompositions
one obtains immediately $c_{\Az}\leq c_{\Az,\Ao}$, if , e.g., $\max\{c_{\Az},c_{\Ao}\}=c_{\Az}$.
\end{rem}

\subsection{Applications to Differential Forms}

We will apply Theorem \ref{constesthilbert} in our differential form setting.
As closure of the exterior derivative defined on $\Cicom{q}$ 
as an unbounded operator on $\ltom$ we get that 
$$\edc_{q}:\eDcom{q}\subset\Ltom{q}\to\Ltom{q+1}$$
is a closed and densely defined linear operator with closed adjoint
$$\edc_{q}^{*}=\cd_{q+1}:\cDom{q+1}\subset\Ltom{q+1}\to\Ltom{q}.$$
These operators satisfy the natural complex property $\edc_{q+1}\edc_{q}\subset0$, i.e.,
$R(\edc_{q})\subset N(\edc_{q+1})$, and thus also
$\cd_{q}\cd_{q+1}\subset0$, i.e.,
$R(\cd_{q+1})\subset N(\cd_{q})$. 
Analogously or using the $*$-operator we can define 
closed operators for the other boundary condition, i.e.,
$$\ed_{q}:\eDom{q}\subset\Ltom{q}\to\Ltom{q+1},\quad
\ed_{q}^{*}=\cdc_{q+1}:\cDcom{q+1}\subset\Ltom{q+1}\to\Ltom{q},$$
which also satisfy the complex properties, i.e.,
$\ed_{q+1}\ed_{q}\subset0$ and $\cdc_{q}\cdc_{q+1}\subset0$. Note that
\begin{align*}
D(\edc_{q})
&=\eDcom{q},
&
D(\ed_{q})
&=\eDom{q},
&
D(\cdc_{q})
&=\cDcom{q},
&
D(\cd_{q})
&=\cDom{q},\\
N(\edc_{q})
&=\eDczom{q},
&
N(\ed_{q})
&=\eDzom{q},
&
N(\cdc_{q})
&=\cDczom{q},
&
N(\cd_{q})
&=\cDzom{q}.
\end{align*}
By \eqref{adjointchar} we get trivially the rules of partial integration, i.e.,
\begin{align}
\mylabel{partintdifff}
\begin{split}
\forall\,\omega\in\eDcom{q}\quad
\forall\,\zeta\in\cDom{q+1}\qquad
\scp{\edc_{q}\omega}{\zeta}_{\Ltom{q+1}}
&=-\scp{\omega}{\cd_{q+1}\zeta}_{\Ltom{q}},\\
\forall\,\omega\in\eDom{q}\quad
\forall\,\zeta\in\cDcom{q+1}\qquad
\scp{\ed_{q}\omega}{\zeta}_{\Ltom{q+1}}
&=-\scp{\omega}{\cdc_{q+1}\zeta}_{\Ltom{q}}.
\end{split}
\end{align}
\eqref{AssAb} provides a useful characterization of homogeneous boundary conditions, i.e.,
\begin{align*}
\eDcom{q}
&=D(\edc_{q})
=D\big((\edc_{q}^{*})^{*}\big)
=D(\cd_{q+1}^{*})\\
&=\setb{\omega\in\Ltom{q}}{\exists\,\zeta\in\Ltom{q+1}\;\forall\,\varphi\in D(\cd_{q+1})=\cDom{q+1}\quad
\scp{\omega}{\cd_{q+1}\varphi}_{\Ltom{q}}=\scp{\zeta}{\varphi}_{\Ltom{q+1}}}\\
&=\setb{\omega\in\eDom{q}}{\forall\,\varphi\in\cDom{q+1}\quad
\scp{\omega}{\cd_{q+1}\varphi}_{\Ltom{q}}=\scp{\ed_{q}\omega}{\varphi}_{\Ltom{q+1}}},
\end{align*}
and analogously or by the $*$-operator we also get
\begin{align}
\mylabel{cDccharac}
\cDcom{q}
&=\setb{\omega\in\Ltom{q}}{\exists\,\xi\in\Ltom{q-1}\;\forall\,\varphi\in\eDom{q-1}\quad
\scp{\omega}{\ed_{q-1}\varphi}_{\Ltom{q}}=\scp{\xi}{\varphi}_{\Ltom{q-1}}}.
\end{align}

In the following we will skip the index $q$ on the operators 
and write just $\edc$, $\ed$ and $\cdc$, $\cd$. 
To incorporate the material law $\eps$ we need to modify these operators slightly.
For this, let us fix some $q=0,\dots,N$ and 
let $\eps$ be an admissible transformation on $q$-forms. Defining
the closed and densely defined linear operators
\begin{align*}
\Az:=\edc:\eDcom{q-1}
&\subset\Ltom{q-1}\to\Ltepsom{q},
&
\Ao:=\edc:\eDcom{q}
&\subset\Ltepsom{q}\to\Ltom{q+1},
\intertext{we see that their closed adjoints are}
\Azs=\edc^{*}=\cd\eps:\eps^{-1}\cDom{q}
&\subset\Ltepsom{q}\to\Ltom{q-1},
&
\Aos=\edc^{*}=\eps^{-1}\cd:\cDom{q+1}
&\subset\Ltom{q+1}\to\Ltepsom{q}.
\end{align*}
Again these operators satisfy the complex property $\Ao\Az=\edc\edc\subset0$, i.e.,
$R(\edc)\subset N(\edc)$, and thus also
$\Azs\Aos=\cd\eps\,\eps^{-1}\cd\subset0$, i.e.,
$R(\eps^{-1}\cd)\subset N(\cd\eps)$. As before, analogously or using the $*$-operator 
we can also define the closed operators
\begin{align*}
\tilde\A_{0}:=\ed:\eDom{q-1}
&\subset\Ltom{q-1}\to\Ltepsom{q},
&
\tilde\A_{1}:=\ed:\eDom{q}
&\subset\Ltepsom{q}\to\Ltom{q+1},\\
\tilde\A_{0}^{*}=\ed^{*}=\cdc\eps:\eps^{-1}\cDcom{q}
&\subset\Ltepsom{q}\to\Ltom{q-1},
&
\tilde\A_{1}^{*}=\ed^{*}=\eps^{-1}\cdc:\cDcom{q+1}
&\subset\Ltom{q+1}\to\Ltepsom{q},
\end{align*}
which satisfy the complex properties as well. 

We will focus on the operators
$\Az$, $\Ao$, $\Azs$, $\Aos$. 
At this point let us note that all results of the Functional Analysis Toolbox Section \ref{fatsec}
are applicable since by Weck's selection theorem \eqref{introallcompactembmaxtdf} the embedding
$$D(\Ao)\cap D(\Azs)=\eDcom{q}\cap\eps^{-1}\cDom{q}\cptemb\Ltepsom{q}=\H_{1}$$
is compact, see, e.g., Theorem \ref{fatbmaintheogen}.
Especially, all ranges are closed, the inverse operators are continuous resp. compact,
the corresponding Friedrichs/Poincar\'e type estimates 
and Helmholtz type decompositions hold, and the cohomology group
$$N_{0,1}=N(\Ao)\cap N(\Azs)=\eDczom{q}\cap\eps^{-1}\cDzom{q}=\Harmdepsom{q}$$
has finite dimension. The corresponding reduced operators are
\begin{align*}
\cAz=\edc:\eDcom{q-1}\cap\cd\cDom{q}
&\subset\cd\cDom{q}\to\edc\eDcom{q-1},\\
\cAzs=\cd\eps:\eps^{-1}\cDom{q}\cap\edc\eDcom{q-1}
&\subset\edc\eDcom{q-1}\to\cd\cDom{q},\\
\cAo=\edc:\eDcom{q}\cap\eps^{-1}\cd\cDom{q+1}
&\subset\eps^{-1}\cd\cDom{q+1}\to\edc\eDcom{q},\\
\cAos=\eps^{-1}\cd:\cDom{q+1}\cap\edc\eDcom{q}
&\subset\edc\eDcom{q}\to\eps^{-1}\cd\cDom{q+1},
\end{align*}
where $\edc\eDcom{q-1}$ and $\eps^{-1}\cd\cDom{q+1}$ 
have to be understood as closed subspaces of $\Ltepsom{q}$.
In this case, Lemma \ref{helmrefined} and Theorem \ref{fatbmaintheogen} read as follows.

\begin{cor}
\label{helmrefinedcor}
The refined Helmholtz type decompositions
\begin{align*}
\Ltepsom{q}
&=\edc\eDcom{q-1}\oplus_{\Ltepsom{q}}\Harmdepsom{q}\oplus_{\Ltepsom{q}}\eps^{-1}\cd\cDom{q+1},\\
\eDczom{q}
&=\edc\eDcom{q-1}\oplus_{\Ltepsom{q}}\Harmdepsom{q},\\
\eps^{-1}\cDzom{q}
&=\Harmdepsom{q}\oplus_{\Ltepsom{q}}\eps^{-1}\cd\cDom{q+1},\\
\eDcom{q}
&=\edc\eDcom{q-1}\oplus_{\Ltepsom{q}}\Harmdepsom{q}
\oplus_{\Ltepsom{q}}\big(\eDcom{q}\cap\eps^{-1}\cd\cDom{q+1}\big),\\
\eps^{-1}\cDom{q}
&=\big(\eps^{-1}\cDom{q}\cap\edc\eDcom{q-1}\big)
\oplus_{\Ltepsom{q}}\Harmdepsom{q}\oplus_{\Ltepsom{q}}\eps^{-1}\cd\cDom{q+1},\\
\eDcom{q}\cap\eps^{-1}\cDom{q}
&=\big(\eps^{-1}\cDom{q}\cap\edc\eDcom{q-1}\big)
\oplus_{\Ltepsom{q}}\Harmdepsom{q}\oplus_{\Ltepsom{q}}\big(\eDcom{q}\cap\eps^{-1}\cd\cDom{q+1}\big)
\end{align*}
hold, all ranges
\begin{align*}
\eDczom{q}\cap\Harmdepsom{q}^{\bot_{\Ltepsom{q}}}
=\edc\eDcom{q-1}
&=\edc\big(\eDcom{q-1}\cap\cd\cDom{q}\big),\\
\edc\eDcom{q}
&=\edc\big(\eDcom{q}\cap\eps^{-1}\cd\cDom{q+1}\big),\\
\cd\cDom{q}
&=\cd\big(\eps^{-1}\cDom{q}\cap\edc\eDcom{q-1}\big),\\
\eps^{-1}\cDzom{q}\cap\Harmdepsom{q}^{\bot_{\Ltepsom{q}}}
=\eps^{-1}\cd\cDom{q+1}
&=\eps^{-1}\cd\big(\cDom{q+1}\cap\edc\eDcom{q}\big)
\end{align*}
are closed, the space of Dirichlet forms $\Harmdepsom{q}=\eDczom{q}\cap\eps^{-1}\cDzom{q}$ is finite dimensional,
the respective inverse operators, i.e.,
\begin{align*}
\cAz^{-1}=\edc^{-1}:\edc\eDcom{q-1}&\to\eDcom{q-1}\cap\cd\cDom{q},\\
\cAo^{-1}=\edc^{-1}:\edc\eDcom{q}&\to\eDcom{q}\cap\eps^{-1}\cd\cDom{q+1},\\
(\cAzs)^{-1}=(\cd\eps)^{-1}:\cd\cDom{q}&\to\big(\eps^{-1}\cDom{q}\cap\edc\eDcom{q-1}\big),\\
(\cAos)^{-1}=(\eps^{-1}\cd)^{-1}:\eps^{-1}\cd\cDom{q+1}&\to\cDom{q+1}\cap\edc\eDcom{q},
\end{align*}
are continuous, and there exist positive constants $c_{\Az}=\tCedcteps{q-1}$ and $c_{\Ao}=\Cedcteps{q}$,
such that the Friedrichs/Poincar\'e type estimates
\begin{align*}
\forall\,\xi&\in\eDcom{q-1}\cap\cd\cDom{q}
&
\norm{\xi}_{\Ltom{q-1}}&\leq\tCedcteps{q-1}\norm{\edc\xi}_{\Ltepsom{q}},\\
\forall\,\omega&\in\eDcom{q}\cap\eps^{-1}\cd\cDom{q+1}
&
\norm{\omega}_{\Ltepsom{q}}&\leq\Cedcteps{q}\norm{\edc\omega}_{\Ltom{q+1}},\\
\forall\,\omega&\in\eps^{-1}\cDom{q}\cap\edc\eDcom{q-1}
&
\norm{\omega}_{\Ltepsom{q}}&\leq\tCedcteps{q-1}\norm{\cd\eps\,\omega}_{\Ltom{q-1}},\\
\forall\,\zeta&\in\cDom{q+1}\cap\edc\eDcom{q}
&
\norm{\zeta}_{\Ltom{q+1}}&\leq\Cedcteps{q}\norm{\eps^{-1}\cd\zeta}_{\Ltepsom{q}}
\end{align*}
hold.
\end{cor}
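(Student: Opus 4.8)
The plan is to apply the abstract machinery of Section \ref{fatsec} — specifically Theorem \ref{fatbmaintheogen} together with Lemma \ref{helmrefined} and Remark \ref{clrangecompembrem} — to the concrete pair $\Az=\edc:\eDcom{q-1}\subset\Ltom{q-1}\to\Ltepsom{q}$ and $\Ao=\edc:\eDcom{q}\subset\Ltepsom{q}\to\Ltom{q+1}$ with adjoints $\Azs=\cd\eps$ and $\Aos=\eps^{-1}\cd$ as identified before the statement. First I would verify the two hypotheses of Theorem \ref{fatbmaintheogen}. The complex property $\Ao\Az=\edc\edc\subset0$, i.e.\ $R(\Az)\subset N(\Ao)$, was already recorded when these operators were introduced. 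The compactness of $D(\Ao)\cap D(\Azs)=\eDcom{q}\cap\eps^{-1}\cDom{q}\cptemb\Ltepsom{q}$ is exactly Weck's selection theorem \eqref{introallcompactembmaxtdf}, where admissibility of $\eps$ makes $\Ltepsom{q}$ norm-equivalent to $\Ltom{q}$ so that the weighting is irrelevant for compactness. Hence Theorem \ref{fatbmaintheogen} applies and delivers, via Lemma \ref{helmrefined} and Remark \ref{clrangecompembrem}, all qualitative conclusions in abstract form: closed ranges, finite-dimensional cohomology $N_{0,1}$, the full family of Helmholtz type decompositions, continuity (indeed compactness) of the reduced inverses, and the Friedrichs/Poincar\'e type estimates with finite best constants $c_{\Az},c_{\Ao}$.

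The remaining work is pure translation into differential-form notation via the identifications fixed above. I would substitute $R(\Az)=\edc\eDcom{q-1}$, $R(\Aos)=\eps^{-1}\cd\cDom{q+1}$, $R(\Ao)=\edc\eDcom{q}$, $R(\Azs)=\cd\cDom{q}$ for the ranges, $N(\Ao)=\eDczom{q}$ and $N(\Azs)=\eps^{-1}\cDzom{q}$ for the kernels, and $N_{0,1}=N(\Ao)\cap N(\Azs)=\Harmdepsom{q}$ for the cohomology group. With these in hand the abstract splitting $\H_{1}=\ovl{R(\Az)}\oplus_{\H_{1}}N_{0,1}\oplus_{\H_{1}}\ovl{R(\Aos)}$ of \eqref{helmappfullcl} becomes the first displayed decomposition; the identities $N(\Ao)=\ovl{R(\Az)}\oplus_{\H_{1}}N_{0,1}$ and $N(\Azs)=N_{0,1}\oplus_{\H_{1}}\ovl{R(\Aos)}$ become the second and third; and the decompositions of $D(\Ao)$, $D(\Azs)$, and $D(\Ao)\cap D(\Azs)$ from Lemma \ref{helmrefined} become the last three.

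For the reduced operators I would use $D(\cA)=D(\A)\cap\ovl{R(\As)}$ together with the now-established closed-range property, giving $D(\cAz)=\eDcom{q-1}\cap\cd\cDom{q}$, $D(\cAo)=\eDcom{q}\cap\eps^{-1}\cd\cDom{q+1}$, $D(\cAzs)=\eps^{-1}\cDom{q}\cap\edc\eDcom{q-1}$, and $D(\cAos)=\cDom{q+1}\cap\edc\eDcom{q}$ — precisely the domains in the displayed inverses and the four estimates, and precisely the range descriptions $\edc\eDcom{q-1}=\edc\big(\eDcom{q-1}\cap\cd\cDom{q}\big)$ etc. Finally I would rename the abstract constants, setting $\tCedcteps{q-1}:=c_{\Az}$ and $\Cedcteps{q}:=c_{\Ao}$. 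By Lemma \ref{lemconstants} the single constant $c_{\Az}$ governs both the estimate for $\Az$ on $D(\cAz)$ and the estimate for $\Azs$ on $D(\cAzs)$, and likewise $c_{\Ao}$ for $\Ao$ and $\Aos$; this explains why $\tCedcteps{q-1}$ appears in the first and third estimates and $\Cedcteps{q}$ in the second and fourth.

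The only genuine care needed — and the closest thing to an obstacle — is the bookkeeping of weighted versus unweighted spaces ($\Ltepsom{q}$ in the middle slot against $\Ltom{q\pm1}$ in the outer slots) and of which adjoint carries the factor $\eps$ or $\eps^{-1}$. Since these are fixed once and for all by the identifications $\Azs=\cd\eps$ and $\Aos=\eps^{-1}\cd$ stated before the corollary, no new argument is required, and the proof reduces to invoking Theorem \ref{fatbmaintheogen} and reading off Lemma \ref{helmrefined} in the present notation.
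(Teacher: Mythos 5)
Your proposal is correct and follows exactly the route the paper takes: the paper states this corollary as a direct specialization of Theorem \ref{fatbmaintheogen} and Lemma \ref{helmrefined} to the operators $\Az=\edc$, $\Ao=\edc$ with adjoints $\cd\eps$ and $\eps^{-1}\cd$, with Weck's selection theorem \eqref{introallcompactembmaxtdf} supplying the compactness hypothesis and the remainder being the same notational translation you describe. Your additional remarks on Lemma \ref{lemconstants} governing the pairing of constants and on the weighted-versus-unweighted bookkeeping match the paper's identifications precisely.
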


\begin{rem}
\label{helmrefinedcorremone}
The corresponding corollary holds for the other boundary conditions on $\cDcom{\dots}$
for the operators $\tilde\A_{0}$, $\tilde\A_{0}^{*}$, $\tilde\A_{1}$, $\tilde\A_{1}^{*}$ as well.
\end{rem}

For $\eps=\id$ just one constant for a single $q$ is needed. More precisely:

\begin{lem}
\label{helmrefinedcorlem}
Let $\eps=\id$. Then for all $q$
$$\tCedct{q}=\Cedct{q}$$
and the Friedrichs/Poincar\'e type estimates
\begin{align*}
\forall\,\omega&\in\eDcom{q}\cap\cd\cDom{q+1}
&
\norm{\omega}_{\Ltom{q}}&\leq\Cedct{q}\norm{\edc\omega}_{\Ltom{q+1}},\\
\forall\,\zeta&\in\cDom{q+1}\cap\edc\eDcom{q}
&
\norm{\zeta}_{\Ltom{q+1}}&\leq\Cedct{q}\norm{\cd\zeta}_{\Ltom{q}}
\end{align*}
hold. Applying the $*$-operator we have
\begin{align*}
\forall\,\omega&\in\cDcom{N-q}\cap\ed\eDom{N-q-1}
&
\norm{\omega}_{\Ltom{N-q}}&\leq\Cedct{q}\norm{\cdc\omega}_{\Ltom{N-q-1}},\\
\forall\,\zeta&\in\eDom{N-q-1}\cap\cdc\cDcom{N-q}
&
\norm{\zeta}_{\Ltom{N-q-1}}&\leq\Cedct{q}\norm{\ed\zeta}_{\Ltom{N-q}}.
\end{align*}
All these four Friedrichs/Poincar\'e type estimates
hold with the same best constants $\Cedct{q}$.
\end{lem}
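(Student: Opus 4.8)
The plan is to recognize all four constants as one intrinsic Friedrichs/Poincar\'e constant of the operator $\edc$ and then to transport the two tangential estimates onto the two normal ones via the Hodge star isometry.

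First I would settle $\tCedct{q}=\Cedct{q}$ as a matter of the index bookkeeping of Corollary \ref{helmrefinedcor}. There, $\Cedct{q}=c_{\Ao}$ is attached to $\Ao=\edc:\eDcom{q}\to\Ltom{q+1}$ viewed as the \emph{second} operator of the complex centered at $\Ltom{q}$, whereas $\tCedct{q}=c_{\Az}$ is attached to \emph{the same} operator $\edc:\eDcom{q}\to\Ltom{q+1}$, now viewed as the \emph{first} operator of the complex centered at $\Ltom{q+1}$. Since for $\eps=\id$ all weighted spaces reduce to the unweighted $L^2$ spaces, both numbers are the intrinsic constant of the single dual pair $(\edc_{q},\cd_{q+1})$, whose reduced domain $\eDcom{q}\cap\cd\cDom{q+1}$ does not depend on whether $\edc_{q}$ is labelled $\Az$ or $\Ao$. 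Hence $\tCedct{q}=\Cedct{q}$.

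Next, the first two estimates I would read off Corollary \ref{helmrefinedcor} with $\eps=\id$: the estimate on $\eDcom{q}\cap\cd\cDom{q+1}$ is the Poincar\'e estimate for $\cAo$ with constant $c_{\Ao}=\Cedct{q}$, while the estimate on $\cDom{q+1}\cap\edc\eDcom{q}$ is the one for $\cAos$ with constant $c_{\Aos}$. By Lemma \ref{lemconstants} the dual-pair constants coincide, $c_{\Aos}=c_{\Ao}=\Cedct{q}$, so both hold with the single constant $\Cedct{q}$.

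For the last two estimates I would apply the Hodge star. Since $*$ is a bijective isometry from $\Ltom{q}$ onto $\Ltom{N-q}$ and, by \eqref{starDdef}, $*\eDcom{q}=\cDcom{N-q}$ and $*\cDom{q+1}=\eDom{N-q-1}$, together with the intertwining relations $*\edc=\pm\cdc\,*$ and $*\cd=\pm\ed\,*$, the substitution $\eta=*\omega$ turns the first estimate into $\norm{\eta}_{\Ltom{N-q}}\leq\Cedct{q}\norm{\cdc\eta}_{\Ltom{N-q-1}}$ on $\cDcom{N-q}\cap\ed\eDom{N-q-1}$, that is, the third estimate; likewise the second estimate becomes the fourth. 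Because $*$ is a bijective isometry carrying each reduced domain exactly onto the next, it preserves the relevant Rayleigh quotients, so the best constants are preserved and all four estimates share the single best constant $\Cedct{q}$. The main bookkeeping obstacle is keeping the degree shifts and the signs in the intertwining relations straight so that the spaces in \eqref{starDdef} match the domains of the four estimates; the signs are irrelevant for the norms, but one must verify that $*$ maps each reduced domain \emph{onto} (not merely into) the next, which is exactly what secures the equality of the best constants rather than only an inequality.
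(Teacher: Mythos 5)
Your proposal is correct and fills in exactly the argument the paper leaves implicit: for $\eps=\id$ the constants $\tCedct{q}$ and $\Cedct{q}$ are the same Rayleigh quotient over the same reduced domain $\eDcom{q}\cap\cd\cDom{q+1}$, the adjoint estimate carries the same constant by Lemma \ref{lemconstants}, and the last two estimates follow by the Hodge star isometry via \eqref{starDdef}. This is essentially the paper's own (unstated) route, so nothing further is needed.
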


With these settings our estimate of interest \eqref{intromaxestelecdf}, i.e., 
$$\norm{\omega}_{\Ltepsom{q}}
\leq\Cteps{q}\big(\norm{\edc\omega}_{\Ltom{q+1}}^2+\norm{\cd\eps\,\omega}_{\Ltom{q-1}}^2\big)^{\nicefrac{1}{2}}$$
for all $\omega\in\eDcom{q}\cap\eps^{-1}\cDom{q}\cap\Harmdepsom{q}^{\bot_{\Ltepsom{q}}}$, reads
$$\forall\,x\in D(\Ao)\cap D(\Azs)\cap N_{0,1}^{\perp_{\H_{1}}}\qquad
\norm{x}_{\H_{1}}
\leq c_{\Az,\Ao} 
\big(\norm{\Ao x}_{\H_{2}}^2
+\norm{\Azs x}_{\H_{0}}^2\big)^{\oh}$$
and by Theorem \ref{constesthilbert} and Remark \ref{constesthilbertrem} we know
$$\Cteps{q}=c_{\Az,\Ao}=\max\{c_{\Az},c_{\Ao}\}=\max\{\tCedcteps{q-1},\Cedcteps{q}\}$$
using the notations from Corollary \ref{helmrefinedcor}. More precisely, Theorem \ref{constesthilbert} shows:

\begin{cor}
\label{constesthilbertdf}
For all $\omega\in\eDcom{q}\cap\eps^{-1}\cDom{q}\cap\Harmdepsom{q}^{\bot_{\Ltepsom{q}}}$
$$\norm{\omega}_{\Ltepsom{q}}^2
\leq\Cedcteps{q}^2\norm{\edc\omega}_{\Ltom{q+1}}^2
+\tCedcteps{q-1}^2\norm{\cd\eps\,\omega}_{\Ltom{q-1}}^2$$
and hence
$$\norm{\omega}_{\Ltepsom{q}}
\leq\Cteps{q}\big(\norm{\edc\omega}_{\Ltom{q+1}}^2+\norm{\cd\eps\,\omega}_{\Ltom{q-1}}^2\big)^{\nicefrac{1}{2}},\qquad
\Cteps{q}=\max\{\tCedcteps{q-1},\Cedcteps{q}\}.$$
\end{cor}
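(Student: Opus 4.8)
The plan is to read off this corollary as the specialization of Theorem \ref{constesthilbert} to the differential-form operators $\Az:=\edc$ and $\Ao:=\edc$ with adjoints $\Azs=\cd\eps$ and $\Aos=\eps^{-1}\cd$ introduced just before Corollary \ref{helmrefinedcor}, under the identifications $\H_{1}=\Ltepsom{q}$, $\H_{0}=\Ltom{q-1}$, $\H_{2}=\Ltom{q+1}$. First I would confirm that the hypotheses of Theorem \ref{constesthilbert} are in force: the complex property $R(\Az)\subset N(\Ao)$ holds because $\Ao\Az=\edc\edc\subset0$, and the required compactness $D(\Ao)\cap D(\Azs)=\eDcom{q}\cap\eps^{-1}\cDom{q}\cptemb\Ltepsom{q}=\H_{1}$ is precisely Weck's selection theorem \eqref{introallcompactembmaxtdf}. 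Hence Theorem \ref{constesthilbert} applies verbatim.

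Next I would translate its conclusion through the dictionary already set up. With $x=\omega$ the admissible set $D(\Ao)\cap D(\Azs)\cap N_{0,1}^{\perp_{\H_{1}}}$ becomes exactly $\eDcom{q}\cap\eps^{-1}\cDom{q}\cap\Harmdepsom{q}^{\bot_{\Ltepsom{q}}}$, since $N_{0,1}=\eDczom{q}\cap\eps^{-1}\cDzom{q}=\Harmdepsom{q}$. Moreover $\Azs x=\cd\eps\,\omega$ and $\Ao x=\edc\omega$, so the two derivative norms $\norm{\Azs x}_{\H_{0}}$ and $\norm{\Ao x}_{\H_{2}}$ turn into $\norm{\cd\eps\,\omega}_{\Ltom{q-1}}$ and $\norm{\edc\omega}_{\Ltom{q+1}}$, while $\norm{x}_{\H_{1}}=\norm{\omega}_{\Ltepsom{q}}$. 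Inserting the sharp constants $c_{\Az}=\tCedcteps{q-1}$ and $c_{\Ao}=\Cedcteps{q}$ from Corollary \ref{helmrefinedcor} yields the first displayed inequality immediately.

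Finally, for the ``hence'' part I would simply bound each of the two squared constants on the right-hand side by $\max\{\tCedcteps{q-1},\Cedcteps{q}\}^2$, factor this scalar out of the sum of the two squared derivative norms, and take square roots; this produces the second estimate with $\Cteps{q}=\max\{\tCedcteps{q-1},\Cedcteps{q}\}$. That this value is in fact the best constant in \eqref{intromaxestelecdfconstdef} is exactly the content of Remark \ref{constesthilbertrem} applied to the present operators, so no separate optimality argument is needed.

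I expect no genuine obstacle here, as the entire content is bookkeeping through the operator dictionary; the only point requiring a moment of care is verifying that the orthogonal complement and the two domain intersections align precisely with the refined Helmholtz decomposition $D(\Ao)\cap D(\Azs)\cap N_{0,1}^{\perp_{\H_{1}}}=D(\cAzs)\oplus_{\H_{1}}D(\cAo)$ used inside the proof of Theorem \ref{constesthilbert}, which is guaranteed by Lemma \ref{helmrefined} and hence by Corollary \ref{helmrefinedcor}.
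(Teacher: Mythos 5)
Your proposal is correct and is essentially the paper's own argument: the corollary is obtained by specializing Theorem \ref{constesthilbert} to the operators $\Az=\edc$, $\Ao=\edc$ with adjoints $\cd\eps$ and $\eps^{-1}\cd$, using Weck's selection theorem for the compactness hypothesis and reading off $c_{\Az}=\tCedcteps{q-1}$, $c_{\Ao}=\Cedcteps{q}$ from Corollary \ref{helmrefinedcor}, with optimality of $\max\{\tCedcteps{q-1},\Cedcteps{q}\}$ supplied by Remark \ref{constesthilbertrem}. No discrepancies to report.
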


\section{Main Results}

By Corollary \ref{constesthilbertdf} we have to find upper and lower bounds for 
the constants $\tCedcteps{q-1}$ and $\Cedcteps{q}$.
As a first step, we take care of the dependencies on the transformation $\eps$.

\begin{lem}
\label{constestremoveeps}
It holds
$$\frac{\Cedct{q-1}}{\epso}\leq\tCedcteps{q-1}\leq\Cedct{q-1}\epsu,\qquad
\frac{\Cedct{q}}{\epsu}\leq\Cedcteps{q}\leq\Cedct{q}\epso.$$
Moreover, 
$$\frac{\min\{\Cedct{q-1},\Cedct{q}\}}{\epsh}
\leq\Cteps{q}
=\max\{\tCedcteps{q-1},\Cedcteps{q}\}
\leq\max\{\Cedct{q-1},\Cedct{q}\}\epsh.$$
\end{lem}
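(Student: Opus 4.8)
The plan is to treat the two constants $\tCedcteps{q-1}$ and $\Cedcteps{q}$ separately, in each case reading off the precise reduced operators and Poincar\'e estimates from Corollary \ref{helmrefinedcor} and then inserting the norm equivalences \eqref{epsuotwodf}. The crucial observation, which decides whether one argues with the primal or the dual estimate, is that in each of the two pairs exactly one of the reduced domains is independent of $\eps$: passing between the weighted and the unweighted ($\eps=\id$) setting should only change a norm, never the space over which the Rayleigh quotient is minimized. I would exploit this to write each $\eps$-constant and its $\eps=\id$ counterpart as infima of quotients over one and the same fixed space.

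For $\tCedcteps{q-1}$ I would use the primal estimate $\norm{\xi}_{\Ltom{q-1}}\le\tCedcteps{q-1}\norm{\edc\xi}_{\Ltepsom{q}}$ on $\eDcom{q-1}\cap\cd\cDom{q}$. Here $R(\Azs)=\cd\cDom{q}$ is visibly $\eps$-free, so this Rayleigh domain is the same as for $\eps=\id$. Inserting the first chain of \eqref{epsuotwodf} with $\omega=\edc\xi$, i.e. $\epsu^{-1}\norm{\edc\xi}_{\Ltom{q}}\le\norm{\edc\xi}_{\Ltepsom{q}}\le\epso\norm{\edc\xi}_{\Ltom{q}}$, dividing by $\norm{\xi}_{\Ltom{q-1}}$ and passing to the infimum over this fixed space gives $\epsu^{-1}\Cedct{q-1}^{-1}\le\tCedcteps{q-1}^{-1}\le\epso\,\Cedct{q-1}^{-1}$, and reciprocals yield $\Cedct{q-1}/\epso\le\tCedcteps{q-1}\le\Cedct{q-1}\epsu$.

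For $\Cedcteps{q}$ the primal reduced domain $\eDcom{q}\cap\eps^{-1}\cd\cDom{q+1}$ genuinely depends on $\eps$, so a direct comparison fails. The key move is to pass to the dual estimate, legitimated by $c_{\Ao}=c_{\Aos}$ (Lemma \ref{lemconstants}): Corollary \ref{helmrefinedcor} gives $\norm{\zeta}_{\Ltom{q+1}}\le\Cedcteps{q}\norm{\eps^{-1}\cd\zeta}_{\Ltepsom{q}}$ on $\cDom{q+1}\cap\edc\eDcom{q}$, whose domain $R(\Ao)=\edc\eDcom{q}$ is now $\eps$-free. Setting $\omega=\eps^{-1}\cd\zeta$, so that $\eps\,\omega=\cd\zeta$, in the second chain of \eqref{epsuotwodf} yields $\epso^{-1}\norm{\cd\zeta}_{\Ltom{q}}\le\norm{\eps^{-1}\cd\zeta}_{\Ltepsom{q}}\le\epsu\norm{\cd\zeta}_{\Ltom{q}}$, and the same infimum/reciprocal argument over the fixed space produces $\Cedct{q}/\epsu\le\Cedcteps{q}\le\Cedct{q}\epso$.

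Finally, for the statement on $\Cteps{q}$ I would combine the two displays using $\Cteps{q}=\max\{\tCedcteps{q-1},\Cedcteps{q}\}$ (Corollary \ref{constesthilbertdf}) and $\epsh=\max\{\epsu,\epso\}$. The upper bound follows from $\tCedcteps{q-1}\le\Cedct{q-1}\epsu\le\Cedct{q-1}\epsh$ and $\Cedcteps{q}\le\Cedct{q}\epso\le\Cedct{q}\epsh$ together with monotonicity of $\max$; the lower bound follows from $\tCedcteps{q-1}\ge\Cedct{q-1}/\epso\ge\Cedct{q-1}/\epsh$ and $\Cedcteps{q}\ge\Cedct{q}/\epsu\ge\Cedct{q}/\epsh$ (which in fact even gives $\max\{\Cedct{q-1},\Cedct{q}\}/\epsh$, hence the stated bound with $\min$ a fortiori). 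I expect the main obstacle to be not computational but structural: correctly recognizing, for each constant, which of the primal or dual reduced domains is $\eps$-independent. Once the right formulation is selected the norm equivalences \eqref{epsuotwodf} do all the work.
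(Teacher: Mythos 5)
Your proof is correct, and for the first pair of inequalities it coincides with the paper's argument: both exploit that the Rayleigh domain $\eDcom{q-1}\cap\cd\cDom{q}$ is independent of $\eps$ and simply insert \eqref{epsuotwodf}. For the second pair, however, you take a genuinely different route. The paper stays on the primal side, where the domain $\eDcom{q}\cap\eps^{-1}\cd\cDom{q+1}$ does depend on $\eps$, and bridges the two domains by the Helmholtz decomposition $\eDcom{q}=\eDczom{q}\oplus\big(\eDcom{q}\cap\cd\cDom{q+1}\big)$ (in the appropriate inner product), writing $\omega=\omega_{0}+\omega_{\cd}$ and using orthogonality plus Cauchy--Schwarz, e.g.\ $\norm{\omega}_{\Ltepsom{q}}^2=\scp{\eps\,\omega}{\omega_{\cd}}_{\Ltom{q}}\leq\Cedct{q}\epso\,\norm{\omega}_{\Ltepsom{q}}\norm{\edc\omega}_{\Ltom{q+1}}$. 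You instead switch to the dual estimate on $\cDom{q+1}\cap\edc\eDcom{q}$, whose domain is $\eps$-free, and rely on the equality of primal and dual best constants (Lemma \ref{lemconstants}, already encoded in Corollary \ref{helmrefinedcor} and Lemma \ref{helmrefinedcorlem}) to transfer the bound back to $\Cedcteps{q}$. Both arguments are sound; yours is more symmetric with the first half and avoids the decomposition, while the paper's avoids invoking the primal--dual coincidence of constants at this point. Your side remark that the lower bound actually holds with $\max\{\Cedct{q-1},\Cedct{q}\}$ in place of $\min$ is also correct and is, in effect, what the paper proves later in Lemma \ref{constestfriedrichstwo}.
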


\begin{proof}
Let $\xi\in\eDcom{q-1}\cap\cd\cDom{q}$.
By Lemma \ref{helmrefinedcorlem} and \eqref{epsuoonedf}, \eqref{epsuotwodf} we see
$$\norm{\xi}_{\Ltom{q-1}}
\leq\Cedct{q-1}\norm{\edc\xi}_{\Ltom{q}}
\leq\Cedct{q-1}\epsu\,\norm{\edc\xi}_{\Ltepsom{q}},$$
and hence $\tCedcteps{q-1}\leq\Cedct{q-1}\epsu$. On the other hand, by 
Corollary \ref{helmrefinedcor} and \eqref{epsuoonedf}, \eqref{epsuotwodf}
$$\norm{\xi}_{\Ltom{q-1}}
\leq\tCedcteps{q-1}\norm{\edc\xi}_{\Ltepsom{q}}
\leq\tCedcteps{q-1}\epso\,\norm{\edc\xi}_{\Ltom{q}}$$
holds, and hence by Lemma \ref{helmrefinedcorlem} 
$\Cedct{q-1}\leq\tCedcteps{q-1}\epso$.
Now, pick $\omega\in\eDcom{q}\cap\eps^{-1}\cd\cDom{q+1}$.
According to Corollary \ref{helmrefinedcor} (with $\eps=\id$) it holds
$$\eDcom{q}
=\eDczom{q}\oplus_{\Ltom{q}}\big(\eDcom{q}\cap\cd\cDom{q+1}\big)$$
and we can decompose
$$\omega=\omega_{0}+\omega_{\cd},\qquad
\omega_{0}\in\eDczom{q},\quad
\omega_{\cd}\in\eDcom{q}\cap\cd\cDom{q+1}$$
with $\edc\omega=\edc\omega_{\cd}$. By orthogonality
as well as Lemma \ref{helmrefinedcorlem} and \eqref{epsuoonedf}, \eqref{epsuotwodf} we have
$$\norm{\omega}_{\Ltepsom{q}}^2
=\scp{\eps\,\omega}{\omega_{\cd}}_{\Ltom{q}}
\leq\Cedct{q}\norm{\eps\,\omega}_{\Ltom{q}}\norm{\edc\omega}_{\Ltom{q+1}}
\leq\Cedct{q}\epso\,\norm{\omega}_{\Ltepsom{q}}\norm{\edc\omega}_{\Ltom{q+1}},$$
and thus $\Cedcteps{q}\leq\Cedct{q}\epso$.
On the other hand, let $\omega\in\eDcom{q}\cap\cd\cDom{q+1}$.
According to Corollary \ref{helmrefinedcor} it holds
$$\eDcom{q}
=\eDczom{q}\oplus_{\Ltepsom{q}}\big(\eDcom{q}\cap\eps^{-1}\cd\cDom{q+1}\big)$$
and we can decompose
$$\omega=\omega_{0}+\omega_{\cd},\qquad
\omega_{0}\in\eDczom{q},\quad
\omega_{\cd}\in\eDcom{q}\cap\eps^{-1}\cd\cDom{q+1}$$
with $\edc\omega=\edc\omega_{\cd}$. By orthogonality
as well as Corollary \ref{helmrefinedcor} and \eqref{epsuoonedf}, \eqref{epsuotwodf} we have
$$\norm{\omega}_{\Ltom{q}}^2
=\scp{\omega}{\omega_{\cd}}_{\Ltom{q}}
\leq\epsu\,\norm{\omega}_{\Ltom{q}}\norm{\omega_{\cd}}_{\Ltepsom{q}}
\leq\Cedcteps{q}\epsu\,\norm{\omega}_{\Ltom{q}}\norm{\edc\omega}_{\Ltom{q+1}},$$
and thus $\Cedct{q}\leq\Cedcteps{q}\epsu$.
\end{proof}

It remains to estimate for all $q$ the constants $\Cedct{q}$.
For this we need the following result about regularity and Gaffney's inequality
in convex domains.

\begin{lem}
\label{gaffneyconvex}
Assume $\om$ additionally to be convex.
Let $\omega\in\eDcom{q}\cap\cDom{q}$ or $\omega\in\eDom{q}\cap\cDcom{q}$.
Then $\omega\in\Hoom{q}$ and 
$$\normltom{\na\vec\omega}^2
\leq\norm{\ed\omega}_{\Ltom{q+1}}^2
+\norm{\cd\omega}_{\Ltom{q-1}}^2.$$
\end{lem}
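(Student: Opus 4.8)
The plan is to establish the Gaffney inequality together with the $\Hoom{q}$-regularity by first proving it for smooth domains and then transferring it to convex domains through an approximation argument. I would begin from the pointwise Bochner--Weitzenb\"ock--type identity for differential forms, which on $\rN$ (flat metric) reduces to the fact that the Hodge Laplacian $\ed\cd+\cd\ed$ acts componentwise as the (negative) scalar Laplacian on the coefficients $\omega_{I}$. Concretely, for a sufficiently smooth form $\omega$ one has the integration-by-parts identity
\begin{align*}
\normltom{\na\vec\omega}^2
=\norm{\ed\omega}_{\Ltom{q+1}}^2
+\norm{\cd\omega}_{\Ltom{q-1}}^2
+\mathcal{B}(\omega),
\end{align*}
where $\mathcal{B}(\omega)$ is a boundary term collecting the difference between the full Dirichlet integral and the Hodge energy. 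The entire content of the lemma is that for either choice of boundary condition ($\iota^{*}\omega=0$, i.e. $\omega\in\eDcom{q}\cap\cDom{q}$, or the normal condition $\omega\in\eDom{q}\cap\cDcom{q}$) this boundary term $\mathcal{B}(\omega)$ is \emph{nonnegative} precisely when $\om$ is convex.

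First I would treat the case of a bounded, $\ci$-smooth, convex domain, where the density of smooth forms lets one perform the integrations by parts without scruple. On such a domain the boundary term $\mathcal{B}(\omega)$ can be rewritten, via the structure equations for the induced connection on $\ga$, as an integral over $\ga$ of a quadratic form in the tangential (resp. normal) trace of $\omega$ whose coefficient is the second fundamental form $\mathrm{II}$ of $\ga$. Because $\om$ is convex, $\mathrm{II}\geq0$, so this boundary integral is nonnegative, yielding
\begin{align*}
\normltom{\na\vec\omega}^2
\leq\norm{\ed\omega}_{\Ltom{q+1}}^2
+\norm{\cd\omega}_{\Ltom{q-1}}^2
\end{align*}
for all smooth $\omega$ satisfying the relevant homogeneous boundary condition, and then for all of $\eDcom{q}\cap\cDom{q}$ (resp. $\eDom{q}\cap\cDcom{q}$) by density. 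Simultaneously this estimate controls the full $\Hoom{q}$-norm, giving the asserted regularity $\omega\in\Hoom{q}$. This is exactly the content of \eqref{friedrichsgaffneysmoothconvex}, which the paper has already flagged as known, citing \cite{kuhnpaulyregmax} for the regularity part.

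Second I would remove the smoothness assumption on $\om$ by approximating a general bounded convex $\om$ from inside by an increasing sequence of smooth bounded convex domains $\om_{k}\uparrow\om$ (e.g. via smoothing of the supporting-hyperplane representation, so that convexity is preserved at each stage). Given $\omega\in\eDcom{q}\cap\cDom{q}$ on $\om$, I would apply the smooth-domain estimate on each $\om_{k}$ to suitable restrictions/extensions of $\omega$, obtaining a uniform bound on $\normltom[\om_{k}]{\na\vec\omega}$ by the $\ed$- and $\cd$-energies; passing to the limit $k\to\infty$ and using weak lower semicontinuity of the $\Hoom{q}$-seminorm then delivers $\omega\in\Hoom{q}(\om)$ together with the inequality on $\om$ itself, with the constant $1$ preserved throughout since the convexity sign of $\mathrm{II}$ never degrades under the approximation.

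\textbf{Main obstacle.} The delicate point is the second step rather than the first: making the domain approximation compatible with the two distinct boundary conditions, so that the boundary traces of $\omega$ behave well under restriction to $\om_{k}$ and no spurious boundary contribution survives in the limit. Equivalently, one must ensure that the class $\eDcom{q}\cap\cDom{q}$ is stable under the approximation and that the uniform constant $1$ is genuinely independent of $k$. The cleanest route is to invoke the already-cited regularity result \eqref{friedrichsgaffneysmoothconvex} and the references \cite{kuhnpaulyregmax}, treating the lemma essentially as a restatement of a known Friedrichs/Gaffney estimate and supplying only the density and limiting arguments needed to pass from the smooth convex setting to the general convex one.
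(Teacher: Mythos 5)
Your first step is unobjectionable but is also not where the work lies: the paper simply takes the smooth convex case \eqref{friedrichsgaffneysmoothconvex} as known input, exactly as you propose, so nothing is gained or lost there. The genuine gap is in your second step, and it is precisely the obstacle you name but do not resolve. If $\om_{k}\uparrow\om$ are smooth convex subdomains, the restriction $\omega|_{\om_{k}}$ of $\omega\in\eDom{q}\cap\cDcom{q}$ does \emph{not} satisfy the homogeneous (normal or tangential) boundary condition on $\p\om_{k}$ --- the boundary condition lives only on $\p\om$ --- so the smooth-domain Gaffney inequality simply does not apply to $\omega|_{\om_{k}}$, and the boundary term $\mathcal{B}(\omega|_{\om_{k}})$ on $\p\om_{k}$ is uncontrolled. ``Suitable restrictions/extensions'' is not a mechanism; an extension would have to land in a larger smooth convex domain while preserving membership in the Maxwell space with boundary condition, which is just as hard as the original problem. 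The paper explicitly warns that a published proof (\cite[p.~834]{amrouchebernardidaugegiraultvectorpot}) founders on exactly this point.

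The paper's actual device, which is absent from your proposal, is a corrector: for each $\om_{n}$ one solves the coercive variational problem \eqref{pdedefomn} for $\zeta_{n}\in\eD{q-1}(\om_{n})$ and sets $\omega_{n}:=\omega-\ed\zeta_{n}$. By construction $\omega_{n}\in\cDc{q}(\om_{n})$ (it genuinely satisfies the normal boundary condition on $\p\om_{n}$), while $\ed\omega_{n}=\ed\omega$ and $\cd\omega_{n}=\cd\omega-\zeta_{n}$, with $\norm{\zeta_{n}}_{\eD{q-1}(\om_{n})}$ uniformly bounded by $\norm{\omega}_{\cDom{q}}$. Only then can \eqref{friedrichsgaffneysmoothconvex} be applied on $\om_{n}$, yielding uniform $\ho(\om_{n})$-bounds on $\vec{\omega}_{n}$; after extracting weak limits one shows, using $\omega\in\cDcom{q}$, that the limit $\zeta$ of $\zeta_{n}$ vanishes, so the corrector disappears and $\omega=\hat\omega\in\Hoom{q}$ with the stated estimate. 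Without some such corrector (or an equivalent way of producing admissible competitors on $\om_{n}$), your limiting argument cannot start, so the proposal as written does not prove the lemma.
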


We will give a simple proof in Appendix \ref{appproofconvex}, 
only based on the well known corresponding result for smooth and convex domains,
see \eqref{friedrichsgaffneysmoothconvex}.
A proof of Lemma \ref{gaffneyconvex} can also be found in the nice paper of Mitrea
\cite[Theorem 5.5]{mitreamariusdirgafffriconvex}, see also
\cite[Corollary 5.6]{mitreamariusdirgafffriconvex}.
For $N=3$, partial and weaker results have been established earlier in
\cite[1.4 Satz, 5.5 Satz]{saranenmaxkegel},
\cite[Theorem 3.1]{saranenineqfried},
\cite[Corollary 3.6, Theorem 3.9]{giraultraviartbook},
\cite[Theorem 2.17]{amrouchebernardidaugegiraultvectorpot}.
Note that for all $\omega\in\Hocom{q}$ Gaffney's equation
\begin{align}
\label{gaffneyeq}
\norm{\na\vec\omega}_{\ltom}^2
=\norm{\ed\omega}_{\Ltom{q+1}}^2
+\norm{\cd\omega}_{\Ltom{q-1}}^2
\end{align}
holds, and that for convex domains all cohomology groups are trivial, i.e., $\Harmdepsom{q}=\{0\}$.

Now we can prove the key result for upper bounds.

\begin{lem}
\label{constestpoincareconvex}
Assume $\om$ additionally to be convex.
Then $\Cedct{q}\leq\cp$.
\end{lem}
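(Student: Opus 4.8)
The goal is to prove $\Cedct{q}\leq\cp$, where $\Cedct{q}$ is the Friedrichs/Poincar\'e type constant appearing in
$$\norm{\omega}_{\Ltom{q}}\leq\Cedct{q}\norm{\edc\omega}_{\Ltom{q+1}}$$
for $\omega\in\eDcom{q}\cap\cd\cDom{q+1}$, and $\cp$ is the classical Poincar\'e constant from \eqref{intropoincareho}. The plan is to take such an $\omega$ and compare its $\edc$-norm against the full gradient norm using Gaffney's inequality, then exploit the orthogonality structure built into the space $\eDcom{q}\cap\cd\cDom{q+1}$.

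First I would observe that $\omega\in\eDcom{q}\cap\cd\cDom{q+1}$ in particular satisfies $\cd\omega=0$, since by Corollary \ref{helmrefinedcor} (with $\eps=\id$) the space $\cd\cDom{q+1}$ is contained in $\cDzom{q}$ (it is the range of $\cd$, and $\cd\cd\subset0$). Thus $\omega\in\eDcom{q}\cap\cDom{q}$ with $\cd\omega=0$, so Lemma \ref{gaffneyconvex} applies: $\omega\in\Hoom{q}$ and
$$\normltom{\na\vec\omega}^2\leq\norm{\edc\omega}_{\Ltom{q+1}}^2+\norm{\cd\omega}_{\Ltom{q-1}}^2=\norm{\edc\omega}_{\Ltom{q+1}}^2.$$
This is the crucial regularity/estimate input and reduces the problem to controlling $\norm{\omega}_{\Ltom{q}}$ by $\normltom{\na\vec\omega}$.

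Next I would pass to the scalar component picture via the vector proxy $\vec\omega=[\omega_I]_I$. Writing everything componentwise, $\normltom{\na\vec\omega}^2=\sum_I\normltom{\na\omega_I}^2$ and $\norm{\omega}_{\Ltom{q}}^2=\sum_I\normltom{\omega_I}^2$. If each component $\omega_I$ had mean value zero, the scalar Poincar\'e inequality \eqref{intropoincareho} would give $\normltom{\omega_I}\leq\cp\normltom{\na\omega_I}$ for each $I$, and summing would yield $\norm{\omega}_{\Ltom{q}}\leq\cp\normltom{\na\vec\omega}\leq\cp\norm{\edc\omega}_{\Ltom{q+1}}$, which is exactly the claim. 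So the main obstacle is justifying that the components are orthogonal to constants, i.e. that $\omega\perp$ the constant forms. I expect this to follow from the condition $\omega\in\cd\cDom{q+1}$: since constant $q$-forms $\kappa$ satisfy $\ed\kappa=0$ and lie in $\eDom{q}$ (indeed in every relevant space), partial integration \eqref{partintdifff} gives $\scp{\omega}{\kappa}=\scp{\cd\zeta}{\kappa}=\pm\scp{\zeta}{\ed\kappa}=0$ for the appropriate potential $\zeta\in\cDom{q+1}$ — more carefully, one uses that $\omega\in\cDzom{q}$ is orthogonal to $\edc\eDcom{q-1}$ and checks the constants sit in the right summand of the Helmholtz decomposition.

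The delicate point I would watch is this orthogonality-to-constants step: the naive scalar Poincar\'e inequality requires each $\omega_I$ separately to have zero mean, whereas the differential-form condition only gives one global orthogonality relation per constant direction. In fact, the constant $q$-forms $\ed x^I$ span exactly the space of harmonic fields' constant representatives, and each $\ed x^I$ is itself closed and co-closed; testing $\omega$ against $\ed x^I$ recovers precisely $\int_\om\omega_I$. Since $\omega\in\cd\cDom{q+1}\subset\cDzom{q}$ and constants are $\ed$-closed, the orthogonality $\scp{\omega}{\ed x^I}_{\Ltom{q}}=0$ holds for every $I$ by \eqref{partintdifff}, so $\int_\om\omega_I=0$ for each $I$ individually. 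This is exactly what makes the componentwise Poincar\'e estimate legitimate, and completes the argument. The remaining inequality $\cp\leq\diam(\om)/\pi$ is the classical Payne--Weinberger bound for convex domains, already recorded in \eqref{introconstest}.
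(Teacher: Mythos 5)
Your setup is fine as far as it goes: for $\omega\in\eDcom{q}\cap\cd\cDom{q+1}$ you correctly deduce $\cd\omega=0$, so Lemma \ref{gaffneyconvex} gives $\omega\in\Hoom{q}$ and $\normltom{\na\vec\omega}\leq\norm{\edc\omega}_{\Ltom{q+1}}$. The gap is exactly at the step you yourself flag as delicate: the claimed orthogonality $\scp{\omega}{\ed x^{I}}_{\Ltom{q}}=0$ is not justified by \eqref{partintdifff} and is in fact false. To integrate by parts in $\scp{\cd\zeta}{\ed x^{I}}_{\Ltom{q}}$ you would need either $\ed x^{I}\in\eDcom{q}$ or $\zeta\in\cDcom{q+1}$; a constant form does not satisfy the tangential boundary condition and $\zeta\in\cDom{q+1}$ carries no boundary condition, so a boundary term survives. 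Structurally, the constant forms are co-closed, hence lie in $\cDzom{q}$, and for convex $\om$ (where the Dirichlet forms $\eDczom{q}\cap\cDzom{q}$ vanish) Corollary \ref{helmrefinedcor} gives $\cDzom{q}=\cd\cDom{q+1}$: the constants sit \emph{inside} the subspace you are working with, not orthogonal to it (nor inside $\edc\eDcom{q-1}$, as your ``more careful'' variant would need). The case $q=0$ makes this concrete: there $\eDcom{0}\cap\cd\cDom{1}=\hocom$, and a nonnegative bump function has nonzero mean, even though the conclusion $\Cedct{0}=\cf\leq\cp$ happens to be true.

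The repair is to dualize, which is what the paper does. By Lemma \ref{helmrefinedcorlem} the same best constant $\Cedct{q}$ governs the adjoint estimate $\norm{\zeta}_{\Ltom{q+1}}\leq\Cedct{q}\norm{\cd\zeta}_{\Ltom{q}}$ for $\zeta\in\cDom{q+1}\cap\edc\eDcom{q}$. Such $\zeta$ is exact, $\zeta=\edc\omega$ with a potential $\omega\in\eDcom{q}$ that \emph{does} carry the tangential boundary condition, so the integration by parts is now legitimate: $\scp{\zeta}{a\ed x^{I}}_{\Ltom{q+1}}=\scp{\edc\omega}{a\ed x^{I}}_{\Ltom{q+1}}=-a\,\scp{\omega}{\cd\ed x^{I}}_{\Ltom{q}}=0$, whence every component $\zeta_{I}$ lies in $\hoom\cap\reals^{\bot_{\ltom}}$. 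Your Gaffney-plus-componentwise-Poincar\'e argument then goes through verbatim on the dual side, using $\ed\zeta=0$ so that Lemma \ref{gaffneyconvex} yields $\normltom{\na\vec\zeta}\leq\norm{\cd\zeta}_{\Ltom{q}}$. The asymmetry to internalize: exact forms with tangential-BC potentials are orthogonal to the constants, while co-exact forms with unconstrained potentials are not.
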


\begin{proof}
By Lemma \ref{helmrefinedcorlem} we may pick 
$\zeta\in\cDom{q+1}\cap\edc\eDcom{q}=\cDom{q+1}\cap\eDczom{q+1}$.
Hence $\zeta=\edc\omega$ with some $\omega\in\eDcom{q}$.
Lemma \ref{gaffneyconvex} shows $\zeta\in\Hoom{q+1}$ and
for all $a\in\reals$ and all $I$ it holds 
$$\scp{\zeta_{I}}{a}_{\ltom}
=\scp{\zeta}{a\ed x^{I}}_{\Ltom{q+1}}
=a\,\scp{\edc\omega}{\ed x^{I}}_{\Ltom{q+1}}
=-a\,\scp{\omega}{\cd\ed x^{I}}_{\Ltom{q}}
=0.$$
Thus $\zeta_{I}\in\hoom\cap\reals^{\bot_{\ltom}}$ for all $I$
and we can apply the Poincar\'e estimate
and Lemma \ref{gaffneyconvex} to obtain
$$\norm{\zeta}_{\Ltom{q+1}}^2
=\sum_{I}\norm{\zeta_{I}}_{\ltom}^2
\leq\cp^2\sum_{I}\norm{\na\zeta_{I}}_{\ltom}^2
=\cp^2\norm{\na\vec\zeta}_{\ltom}^2
\leq\cp^2\norm{\cd\zeta}_{\Ltom{q}}^2.$$
Hence $\Cedct{q}\leq\cp$.
\end{proof}

A proof of Lemma \ref{constestpoincareconvex} can also be found in 
\cite[Corollary 5.10]{mitreamariusdirgafffriconvex},
where the estimates are equivalently formulated in terms of estimates for eigenvalues.
For $N=3$, the tangential boundary condition in $\mathring\hsymbol(\curl,\om)$,
and smooth convex domains the result has also been established in 
\cite[Theorem 3.1]{baozhouinvprobscat}.
In both papers, especially in \cite{baozhouinvprobscat},
the proof is more lengthy and complicated than our short proof.

For lower bounds we have the following.

\begin{lem}
\label{constestfriedrichs}
Assume $\om$ additionally to be topologically trivial.
Then $\Ct{q}\geq\cf$.
\end{lem}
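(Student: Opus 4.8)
The plan is to bypass the splitting $\Ct{q}=\max\{\tCedct{q-1},\Cedct{q}\}$ entirely and instead read the lower bound off the \emph{full} Maxwell quotient, for which I will produce one explicit test form realizing the Friedrichs quotient. Since $\om$ is topologically trivial, the harmonic Dirichlet forms vanish, $\Harmdepsom{q}=\{0\}$ with $\eps=\id$, so the orthogonality side condition in the definition \eqref{intromaxestelecdfconstdef} (see also Corollary \ref{constesthilbertdf}) disappears and $\Ct{q}$ is simply the best constant in
$$\norm{\omega}_{\Ltom{q}}\leq\Ct{q}\big(\norm{\edc\omega}_{\Ltom{q+1}}^2+\norm{\cd\omega}_{\Ltom{q-1}}^2\big)^{\nicefrac{1}{2}}$$
valid for all $\omega\in\eDcom{q}\cap\cDom{q}$. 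Consequently $\Ct{q}$ equals the supremum of the reciprocal quotient, and it suffices to exhibit, for an arbitrary $u\in\hocom$, an admissible $q$-form whose Maxwell quotient equals $\normltom{u}/\normltom{\na u}$.

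The key step is the choice $\omega:=u\,\ed x^{I}$ for a fixed ordered multi-index $I$ of length $q$, which exists since $0\leq q\leq N$ (take $I=(1,\dots,q)$, and $I=\emptyset$ for $q=0$). Because $u\in\hocom$ we have $\omega\in\Hocom{q}\subset\eDcom{q}\cap\cDom{q}$, so $\omega$ is admissible, and its vector proxy $\vec\omega$ carries $u$ in its $I$-slot and zeros elsewhere. Hence $\norm{\omega}_{\Ltom{q}}=\normltom{u}$ and $\normltom{\na\vec\omega}=\normltom{\na u}$. I then invoke Gaffney's equation \eqref{gaffneyeq}, which holds for every form in $\Hocom{q}$ and requires no convexity, to obtain (using $\edc\omega=\ed\omega$ as $\omega\in\eDcom{q}$)
$$\norm{\edc\omega}_{\Ltom{q+1}}^2+\norm{\cd\omega}_{\Ltom{q-1}}^2=\normltom{\na\vec\omega}^2=\normltom{\na u}^2.$$

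Inserting $\omega$ into the Maxwell estimate above then gives $\normltom{u}\leq\Ct{q}\normltom{\na u}$. As $u\in\hocom$ was arbitrary and $\cf$ is the best (smallest) constant in the Friedrichs inequality \eqref{intropoincarehoc}, i.e. $\cf=\sup_{0\neq u\in\hocom}\normltom{u}/\normltom{\na u}$, passing to the supremum over $u$ yields $\cf\leq\Ct{q}$, which is the claim.

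I do not anticipate a genuine obstacle here. The only substantive ingredient is Gaffney's equation \eqref{gaffneyeq} for $\Hocom{q}$-forms, which is already stated and needs neither convexity nor any regularity beyond membership in $\Hocom{q}$; topological triviality enters solely to discard the orthogonality constraint against $\Harmdepsom{q}$. The single point to keep straight is that, by working directly with the full Maxwell quotient rather than with the two Poincar\'e-type constants $\tCedct{q-1},\Cedct{q}$ separately, no co-exactness side condition on the test form has to be arranged, which is exactly what makes the product form $u\,\ed x^{I}$ legitimate.
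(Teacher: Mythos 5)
Your proposal is correct and follows essentially the same route as the paper: the same test form $\omega=u\,\ed x^{I}\in\Hocom{q}$, Gaffney's equation \eqref{gaffneyeq} to identify $\norm{\ed\omega}_{\Ltom{q+1}}^2+\norm{\cd\omega}_{\Ltom{q-1}}^2$ with $\normltom{\na u}^2$, and the full Maxwell estimate \eqref{intromaxestelecdf} with topological triviality used only to discard the orthogonality constraint against $\Harmdepsom{q}$. Your write-up merely spells out more explicitly why working with the full quotient avoids any co-exactness side condition, which is implicit in the paper's one-line computation.
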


\begin{proof}
As $\om$ is topologically trivial, all cohomology groups vanish.
Therefore, for all $u\in\hocom$ and some $I$ 
and with $\omega:=u\ed x^{I}\in\Hocom{q}\subset\eDcom{q}\cap\cDcom{q}$ 
we compute by \eqref{intromaxestelecdf} and \eqref{gaffneyeq}
\begin{align*}
\norm{u}_{\ltom}
=\norm{\omega}_{\Ltom{q}}
\leq\Ct{q}\big(\norm{\ed\omega}_{\Ltom{q+1}}^2
+\norm{\cd\omega}_{\Ltom{q-1}}^2\big)^{\oh}
=\Ct{q}\norm{\na\vec\omega}_{\ltom}
=\Ct{q}\norm{\na u}_{\ltom}.
\end{align*}
Thus $\cf\leq\Ct{q}$.
\end{proof}

\begin{lem}
\label{constestfriedrichstwo}
Assume $\om$ additionally to be topologically trivial.
Then $\displaystyle\Cteps{q}\geq\frac{\cf}{\epsh}$.
\end{lem}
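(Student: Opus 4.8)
The goal is to prove $\Cteps{q}\geq\cf/\epsh$ assuming $\om$ is topologically trivial. The plan is to combine the previously established constant $\eps$-dependence from Lemma \ref{constestremoveeps} with the Friedrichs lower bound from Lemma \ref{constestfriedrichs}, rather than re-deriving anything from scratch. By Corollary \ref{constesthilbertdf} we already know the identification $\Cteps{q}=\max\{\tCedcteps{q-1},\Cedcteps{q}\}$, so it suffices to bound this maximum from below.

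First I would invoke the lower-bound half of the combined estimate in Lemma \ref{constestremoveeps}, namely
$$
\Cteps{q}
=\max\{\tCedcteps{q-1},\Cedcteps{q}\}
\geq\frac{\min\{\Cedct{q-1},\Cedct{q}\}}{\epsh}.
$$
This reduces the problem to showing $\min\{\Cedct{q-1},\Cedct{q}\}\geq\cf$, i.e.\ that \emph{both} of the identity-weight constants $\Cedct{q-1}$ and $\Cedct{q}$ are at least $\cf$. The key observation is that Lemma \ref{constestfriedrichs}, which gives $\Ct{q}\geq\cf$, is stated for every order $q$ (under the same topological triviality assumption), and by Corollary \ref{constesthilbertdf} in the case $\eps=\id$ together with Lemma \ref{helmrefinedcorlem} we have the identification $\Ct{q}=\max\{\Cedct{q-1},\Cedct{q}\}$.

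The slightly delicate point—and the one I expect to be the main obstacle—is upgrading the bound on the maximum $\max\{\Cedct{q-1},\Cedct{q}\}\geq\cf$ to a bound on the \emph{minimum}. A bound on a maximum does not automatically bound the minimum, so one cannot simply quote Lemma \ref{constestfriedrichs} for a single order. Instead I would apply Lemma \ref{constestfriedrichs} at \emph{two consecutive orders}: applied at order $q$ it yields $\max\{\Cedct{q-1},\Cedct{q}\}\geq\cf$, and applied at order $q-1$ it yields $\max\{\Cedct{q-2},\Cedct{q-1}\}\geq\cf$. A cleaner route, which I would pursue, is to re-run the short argument of Lemma \ref{constestfriedrichs} directly at the level of a single reduced constant: the test form $\omega=u\,\ed x^{I}$ with $u\in\hocom$ lies in $\eDcom{q}\cap\cd\cDom{q+1}$ (since $\cd\omega=0$ by $\cd\ed x^{I}=0$), so it is admissible in the reduced Friedrichs estimate of Lemma \ref{helmrefinedcorlem} defining $\Cedct{q}$, and Gaffney's equation \eqref{gaffneyeq} gives $\norm{\edc\omega}_{\Ltom{q+1}}=\norm{\na u}_{\ltom}$. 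This shows $\Cedct{q}\geq\cf$ for every $q$ separately, hence $\min\{\Cedct{q-1},\Cedct{q}\}\geq\cf$.

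Putting the two pieces together yields
$$
\Cteps{q}\geq\frac{\min\{\Cedct{q-1},\Cedct{q}\}}{\epsh}\geq\frac{\cf}{\epsh},
$$
which is the claim. In summary, the proof is essentially a two-line assembly once one isolates each reduced constant $\Cedct{q}$ individually; the only subtlety is recognizing that the maximum-form statements must be replaced by per-order statements so that the minimum in Lemma \ref{constestremoveeps} can be controlled.
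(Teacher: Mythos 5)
Your overall strategy --- pass to the identity-weight constants via Lemma \ref{constestremoveeps} and then invoke the Friedrichs-type lower bound of Lemma \ref{constestfriedrichs} --- is the right one, but the particular reduction you chose forces you to prove more than is true, and the step you supply to close the resulting gap is incorrect. You correctly identify the obstacle: Lemma \ref{constestfriedrichs} only controls the maximum $\Ct{q}=\max\{\Cedct{q-1},\Cedct{q}\}$, whereas your route through $\Cteps{q}\geq\min\{\Cedct{q-1},\Cedct{q}\}/\epsh$ requires a lower bound on the \emph{minimum}. Your fix is to claim that $\omega=u\,\ed x^{I}$ with $u\in\hocom$ satisfies $\cd\omega=0$ ``since $\cd\,\ed x^{I}=0$'' and hence lies in $\eDcom{q}\cap\cd\cDom{q+1}$. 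This is false for $q\geq1$: the co-derivative obeys no such product rule, and $\cd(u\,\ed x^{I})$ is, up to sign, the contraction of $\na u$ into $\ed x^{I}$, which is generically nonzero. This is precisely why the paper's proof of Lemma \ref{constestfriedrichs} keeps \emph{both} terms $\norm{\ed\omega}_{\Ltom{q+1}}^2+\norm{\cd\omega}_{\Ltom{q-1}}^2$ and converts their sum into $\norm{\na u}_{\ltom}^2$ via Gaffney's equation \eqref{gaffneyeq}; neither summand vanishes on its own. Consequently $u\,\ed x^{I}$ is not an admissible test form for the reduced constant $\Cedct{q}$, and the per-order bound $\Cedct{q}\geq\cf$ is not established (it is, in fact, not proved anywhere in the paper for the interior range of $q$; cf.\ the conjecture in Remark \ref{convexandmorerem}). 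Worse, the target $\min\{\Cedct{q-1},\Cedct{q}\}\geq\cf$ fails outright at the endpoints: for $q=N$ the operator $\edc$ on $N$-forms is degenerate, the reduced domain $\eDcom{N}\cap\cd\cDom{N+1}$ is trivial, so $\Cedct{N}$ contributes nothing and the minimum gives no information, even though the lemma itself still holds there via $\Ct{N}=\Cedct{N-1}=\cp$.

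The repair is to avoid the minimum altogether, which is what the paper does. Lemma \ref{constestremoveeps} provides the \emph{individual} lower bounds $\tCedcteps{q-1}\geq\Cedct{q-1}/\epso$ and $\Cedcteps{q}\geq\Cedct{q}/\epsu$, and $\Cteps{q}=\max\{\tCedcteps{q-1},\Cedcteps{q}\}$ dominates each of the two quantities separately. Splitting into cases according to which of $\Cedct{q-1}$, $\Cedct{q}$ realizes the maximum $\Ct{q}$, one obtains either $\Cteps{q}\geq\tCedcteps{q-1}\geq\Cedct{q-1}/\epso=\Ct{q}/\epso$ or $\Cteps{q}\geq\Cedcteps{q}\geq\Cedct{q}/\epsu=\Ct{q}/\epsu$, and in both cases $\Cteps{q}\geq\Ct{q}/\epsh\geq\cf/\epsh$ by Lemma \ref{constestfriedrichs}. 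Only the bound on the maximum is ever needed.
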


\begin{proof}
It holds $\Ct{q}=\max\{\Cedct{q-1},\Cedct{q}\}$
and $\Cteps{q}=\max\{\tCedcteps{q-1},\Cedcteps{q}\}$.
If $\Ct{q}=\Cedct{q-1}$, then by Lemma \ref{constestremoveeps}
and Lemma \ref{constestfriedrichs}
$$\Cteps{q}
\geq\tCedcteps{q-1}
\geq\frac{\Cedct{q-1}}{\epso}
=\frac{\Ct{q}}{\epso}
\geq\frac{\cf}{\epsh}.$$
If $\Ct{q}=\Cedct{q}$, then by Lemma \ref{constestremoveeps}
and Lemma \ref{constestfriedrichs}
$$\Cteps{q}
\geq\Cedcteps{q}
\geq\frac{\Cedct{q}}{\epsu}
=\frac{\Ct{q}}{\epsu}
\geq\frac{\cf}{\epsh},$$
completing the proof.
\end{proof}

Combining Corollary \ref{constesthilbertdf}, Lemma \ref{constestremoveeps},
Lemma \ref{constestpoincareconvex}, Lemma \ref{constestfriedrichs}, and Lemma \ref{constestfriedrichstwo} 
we can formulate our main result.

\begin{theo}
\label{maintheo}
Assume $\om$ additionally to be convex.
Then for all $\omega\in\eDcom{q}\cap\eps^{-1}\cDom{q}$
\begin{align*}
\norm{\omega}_{\Ltepsom{q}}^2
&\leq\Cedcteps{q}^2\norm{\edc\omega}_{\Ltom{q+1}}^2
+\tCedcteps{q-1}^2\norm{\cd\eps\,\omega}_{\Ltom{q-1}}^2\\
&\leq\Cteps{q}^2\big(\norm{\edc\omega}_{\Ltom{q+1}}^2
+\norm{\cd\eps\,\omega}_{\Ltom{q-1}}^2\big).
\end{align*}
Moreover,
$$\frac{\Cedct{q-1}}{\epso}\leq\tCedcteps{q-1}\leq\Cedct{q-1}\epsu\leq\cp\epsu,\qquad
\frac{\Cedct{q}}{\epsu}\leq\Cedcteps{q}\leq\Cedct{q}\epso\leq\cp\epso$$
as well as
$$\frac{\cf}{\epsh}
\leq\Cteps{q}
=\max\{\tCedcteps{q-1},\Cedcteps{q}\}
\leq\cp\epsh,\qquad
\cp\leq\frac{\diam(\om)}{\pi}.$$
Especially, for $\eps=\id$ it holds for all $q$
\begin{align}
\label{cfcqmocqcp}
\tCedct{q}=\Cedct{q}\leq\cp,\qquad
\cf\leq\Ct{q}
=\max\{\Cedct{q-1},\Cedct{q}\}
\leq\cp\leq\frac{\diam(\om)}{\pi}.
\end{align}
\end{theo}

The corresponding theorem holds for the other boundary condition as well.

\begin{cor}
\label{maintheostar}
Assume $\om$ additionally to be convex.
Then for all $\omega\in\eDom{q}\cap\eps^{-1}\cDcom{q}$
\begin{align*}
\norm{\omega}_{\Ltepsom{q}}^2
&\leq\tCedctmu{N-q-1}^2\norm{\ed\omega}_{\Ltom{q+1}}^2
+\Cedctmu{N-q}^2\norm{\cdc\eps\,\omega}_{\Ltom{q-1}}^2\\
&\leq\Ctmu{N-q}^2\big(\norm{\ed\omega}_{\Ltom{q+1}}^2
+\norm{\cdc\eps\,\omega}_{\Ltom{q-1}}^2\big),
\end{align*}
where $\mu:=(-1)^{q(N-q)}*\eps^{-1}*$. Moreover,
$$\frac{\Cedct{N-q-1}}{\epsu}\leq\tCedctmu{N-q-1}\leq\Cedct{N-q-1}\epso\leq\cp\epso,\qquad
\frac{\Cedct{N-q}}{\epso}\leq\Cedctmu{N-q}\leq\Cedct{N-q}\epsu\leq\cp\epsu$$
as well as
$$\frac{\cf}{\epsh}
\leq\Ctmu{N-q}
=\max\{\tCedctmu{N-q-1},\Cedctmu{N-q}\}
\leq\cp\epsh,\qquad
\cp\leq\frac{\diam(\om)}{\pi}.$$
Especially, \eqref{cfcqmocqcp} holds for $\eps=\id$ and for all $q$.
\end{cor}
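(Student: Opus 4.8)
The plan is to deduce Corollary \ref{maintheostar} from Theorem \ref{maintheo} by means of the Hodge star operator, exactly in the spirit in which the normal version of Weck's selection theorem was obtained from the tangential one via \eqref{starDdef}. Given $\omega\in\eDom{q}\cap\eps^{-1}\cDcom{q}$, I would introduce the $(N-q)$-form $\eta:=*\,\eps\,\omega$. Since $\eps\,\omega\in\cDcom{q}$, the identity $*\,\cDcom{q}=\eDcom{N-q}$ from \eqref{starDdef} gives $\eta\in\eDcom{N-q}$; and since $\omega\in\eDom{q}$, we have $*\,\omega\in*\,\eDom{q}=\cDom{N-q}$. A short computation using $*\,*=(-1)^{q(N-q)}$ on $q$-forms and the definition $\mu=(-1)^{q(N-q)}*\eps^{-1}*$ shows $\mu\,\eta=*\,\omega\in\cDom{N-q}$, so that $\eta\in\eDcom{N-q}\cap\mu^{-1}\cDom{N-q}$ is precisely an admissible field for Theorem \ref{maintheo} at order $N-q$ with material law $\mu$.

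Next I would translate the three norms appearing in the estimate of Theorem \ref{maintheo} (applied to $\eta$, order $N-q$, law $\mu$) back to $\omega$, using that $*$ is an isometry on $\Ltom{}$ and that $\cd=(-1)^{(q-1)N}*\ed*$. Concretely, $\norm{\eta}_{\Ltmuom{N-q}}^2=\scp{\mu\,\eta}{\eta}_{\Ltom{N-q}}=\scp{*\,\omega}{*\,\eps\,\omega}_{\Ltom{N-q}}=\norm{\omega}_{\Ltepsom{q}}^2$, while $\norm{\edc\eta}_{\Ltom{N-q+1}}=\norm{\cdc\eps\,\omega}_{\Ltom{q-1}}$ and $\norm{\cd\mu\,\eta}_{\Ltom{N-q-1}}=\norm{\ed\omega}_{\Ltom{q+1}}$, the latter two because $*$ intertwines $\ed$ and $\cd$ up to sign while preserving $\Ltom{}$-norms. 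Substituting these identities into $\norm{\eta}_{\Ltmuom{N-q}}^2\leq\Cedctmu{N-q}^2\norm{\edc\eta}^2+\tCedctmu{N-q-1}^2\norm{\cd\mu\eta}^2$ yields exactly the claimed bound for $\omega$. I expect the sign bookkeeping in $\cd=(-1)^{(q-1)N}*\ed*$ and in $*\,*=(-1)^{q(N-q)}$ to be the only genuinely delicate point; since every occurrence enters inside a norm or a symmetric pairing, the signs are immaterial for the final inequalities, yet they must be tracked to confirm that $\eta$ lands in the correct domain and that $\mu$ is well defined.

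Finally, for the constant estimates I would verify that $\mu$ is admissible with lower and upper bounds $\epso$ and $\epsu$ (in the roles played by $\epsu$ and $\epso$ for $\eps$), since $\scp{\mu\,\eta}{\eta}_{\Ltom{N-q}}=\scp{\eps^{-1}*\eta}{*\eta}_{\Ltom{q}}$ and the eigenvalue bounds for $\eps^{-1}$ are the reciprocals of those for $\eps$; hence $\hat\mu=\max\{\epso,\epsu\}=\epsh$. Applying Lemma \ref{constestremoveeps} at order $N-q$ with law $\mu$ then gives the displayed two-sided bounds for $\tCedctmu{N-q-1}$ and $\Cedctmu{N-q}$, and combining with Lemma \ref{constestpoincareconvex} (which yields $\Cedct{N-q-1},\Cedct{N-q}\leq\cp$ by convexity) produces the upper bounds $\cp\epso$ and $\cp\epsu$. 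The bound $\frac{\cf}{\epsh}\leq\Ctmu{N-q}\leq\cp\epsh$ then follows from Lemma \ref{constestfriedrichstwo} applied to $\mu$ together with the just-established upper bounds and $\hat\mu=\epsh$, while $\cp\leq\frac{\diam(\om)}{\pi}$ is quoted from Theorem \ref{maintheo}. For $\eps=\id$ one has $\mu=\id$, so \eqref{cfcqmocqcp} carries over verbatim.
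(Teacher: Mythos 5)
Your proposal is correct and follows essentially the same route as the paper: you set $\eta=*\,\eps\,\omega$ (the paper's $\zeta$), check $\eta\in\eDcom{N-q}\cap\mu^{-1}\cDom{N-q}$ with $\mu=(-1)^{q(N-q)}*\eps^{-1}*$, apply Theorem \ref{maintheo} at order $N-q$ with law $\mu$, translate the three norms back via the isometry of $*$, and obtain the constant bounds by noting that $\mu$ is admissible with the roles of $\epsu$ and $\epso$ interchanged, so that $\hat\mu=\epsh$. The paper's proof is the same computation, including the identity $\scp{\mu\,\zeta}{\zeta}_{\Ltom{N-q}}=\scp{\eps^{-1}*\zeta}{*\,\zeta}_{\Ltom{q}}$ you use to verify admissibility of $\mu$.
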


In the introduction we have denoted $\Ctmu{N-q}$ by $\Cneps{q}$.

\begin{proof}
Let $\omega\in\eDom{q}\cap\eps^{-1}\cDcom{q}$. Then $*\,\omega\in\cDom{N-q}$ and 
with $\mu^{-1}=(-1)^{q(N-q)}*\eps\,*$ we have
$$\zeta:=*\,\eps\,\omega=(-1)^{q(N-q)}*\eps**\,\omega\in\eDcom{N-q}\cap\mu^{-1}\cDom{N-q}.$$
As $\eps$ is admissible, so is $(-1)^{q(N-q)}*\eps\,*$ and hence also its inverse $\mu$. 
Theorem \ref{maintheostar} applied to $N-q$, $\zeta$, $\mu$ instead of $q$, $\omega$, $\eps$ shows
\begin{align*}
\norm{\zeta}_{\Ltmuom{N-q}}^2
&\leq\Cedctmu{N-q}^2\norm{\edc\zeta}_{\Ltom{N-q+1}}^2
+\tCedctmu{N-q-1}^2\norm{\cd\mu\,\zeta}_{\Ltom{N-q-1}}^2\\
&\leq\Ctmu{N-q}^2\big(\norm{\edc\zeta}_{\Ltom{N-q+1}}^2
+\norm{\cd\mu\,\zeta}_{\Ltom{N-q-1}}^2\big).
\end{align*}
Moreover, $*\,\eps\,*$ has the same properties \eqref{epsuoonedf}, \eqref{epsuotwodf} as $\eps$ 
and hence, as inverse, $\mu$ inherits these properties with 
$\epsu$ and $\epso$ interchanged. Note that, e.g.,
$$\scp{\mu\,\zeta}{\zeta}_{\Ltom{N-q}}
=\scp{\eps^{-1}*\zeta}{*\,\zeta}_{\Ltom{q}}
=\norm{\eps^{\moh}*\zeta}_{\Ltom{q}}^2
\leq\epsu^2\norm{\eps^{\moh}*\zeta}_{\Ltepsom{q}}^2
=\epsu^2\norm{\zeta}_{\Ltom{N-q}}^2$$
holds by \eqref{epsuotwodf}.
Hence the estimates for the constants follow immediately. Plugging in 
\begin{align*}
\norm{\zeta}_{\Ltmuom{N-q}}^2
&=\scp{\mu\,\zeta}{\zeta}_{\Ltom{N-q}}
=(-1)^{q(N-q)}\scp{*\,\eps^{-1}**\,\eps\,\omega}{*\,\eps\,\omega}_{\Ltom{N-q}}\\
&=\scp{\omega}{\eps\,\omega}_{\Ltom{q}}
=\norm{\omega}_{\Ltepsom{q}}^2,\\
\norm{\edc\zeta}_{\Ltom{N-q+1}}
&=\norm{\edc*\,\eps\,\omega}_{\Ltom{N-q+1}}
=\norm{\cdc\eps\,\omega}_{\Ltom{q-1}},\\
\norm{\cd\mu\,\zeta}_{\Ltom{N-q-1}}
&=\norm{\cd*\,\eps^{-1}**\,\eps\,\omega}_{\Ltom{N-q-1}}
=\norm{\ed\omega}_{\Ltom{q+1}}
\end{align*}
we obtain
\begin{align*}
\norm{\omega}_{\Ltepsom{q}}^2
&\leq\Cedctmu{N-q}^2\norm{\cdc\eps\,\omega}_{\Ltom{q-1}}^2
+\tCedctmu{N-q-1}^2\norm{\ed\omega}_{\Ltom{q+1}}^2\\
&\leq\Ctmu{N-q}^2\big(\norm{\cdc\eps\,\omega}_{\Ltom{q-1}}^2
+\norm{\ed\omega}_{\Ltom{q+1}}^2\big),
\end{align*}
completing the proof.
\end{proof}

The same transformation technique or 
just repeating the previous arguments
shows that Corollary \ref{helmrefinedcor}, especially the Friedrichs/Poincar\'e type estimates,
Corollary \ref{constesthilbertdf} and Lemma \ref{constestremoveeps}
hold for the other boundary condition placed on $\eps^{-1}\cDcom{q}$ as well.
More precisely, with $\mu$ as before and defining the (harmonic) Neumann forms by
$$\Harmnepsom{q}:=\eDzom{q}\cap\eps^{-1}\cDczom{q}$$
we have the following results.

\begin{cor}
\label{constesthilbertdfcdc}
For all $\omega\in\eDom{q}\cap\eps^{-1}\cDcom{q}\cap\Harmnepsom{q}^{\bot_{\Ltepsom{q}}}$
\begin{align*}
\norm{\omega}_{\Ltepsom{q}}^2
&\leq\tCedctmu{N-q-1}^2\norm{\ed\omega}_{\Ltom{q+1}}^2
+\Cedctmu{N-q}^2\norm{\cdc\eps\,\omega}_{\Ltom{q-1}}^2\\
&\leq\Ctmu{N-q}\big(\norm{\ed\omega}_{\Ltom{q+1}}^2+\norm{\cdc\eps\,\omega}_{\Ltom{q-1}}^2\big)^{\nicefrac{1}{2}}
\end{align*}
with $\Ctmu{N-q}=\max\{\tCedctmu{N-q-1},\Cedctmu{N-q}\}$. Especially,
\begin{align*}
\forall\,\xi&\in\eDom{q-1}\cap\cdc\cDcom{q}
&
\norm{\xi}_{\Ltom{q-1}}&\leq\Cedctmu{N-q}\norm{\ed\xi}_{\Ltepsom{q}},\\
\forall\,\omega&\in\eDom{q}\cap\eps^{-1}\cdc\cDcom{q+1}
&
\norm{\omega}_{\Ltepsom{q}}&\leq\tCedctmu{N-q-1}\norm{\ed\omega}_{\Ltom{q+1}},\\
\forall\,\omega&\in\eps^{-1}\cDcom{q}\cap\ed\eDom{q-1}
&
\norm{\omega}_{\Ltepsom{q}}&\leq\Cedctmu{N-q}\norm{\cdc\eps\,\omega}_{\Ltom{q-1}},\\
\forall\,\zeta&\in\cDcom{q+1}\cap\ed\eDom{q}
&
\norm{\zeta}_{\Ltom{q+1}}&\leq\tCedctmu{N-q-1}\norm{\eps^{-1}\cdc\zeta}_{\Ltepsom{q}}.
\end{align*}
\end{cor}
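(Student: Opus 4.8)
The plan is to reduce the whole statement to its tangential counterparts, Corollary \ref{helmrefinedcor} and Corollary \ref{constesthilbertdf}, by transporting everything through the Hodge star operator, exactly as in the proof of Corollary \ref{maintheostar}. All of the analysis --- closedness of ranges, compactness, and the Friedrichs/Poincar\'e type estimates --- is already available there for the $\edc$-complex, so no new functional-analytic input is needed; in contrast to Corollary \ref{maintheostar}, however, convexity is not assumed here, so the Neumann forms $\Harmnepsom{q}$ need not vanish and the orthogonality projection onto $\Harmnepsom{q}^{\bot_{\Ltepsom{q}}}$ must be carried along.

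First I would fix $\omega\in\eDom{q}\cap\eps^{-1}\cDcom{q}\cap\Harmnepsom{q}^{\bot_{\Ltepsom{q}}}$ and set $\zeta:=*\,\eps\,\omega$. As in Corollary \ref{maintheostar}, \eqref{starDdef} together with the admissibility of $\mu=(-1)^{q(N-q)}*\eps^{-1}*$ gives $\zeta\in\eDcom{N-q}\cap\mu^{-1}\cDom{N-q}$, and the computation there shows that $*\,\eps$ is an isometry of $\Ltepsom{q}$ onto $\Ltmuom{N-q}$ carrying the Neumann forms $\Harmnepsom{q}$ onto the $\mu$-Dirichlet forms. Hence $\omega\bot_{\Ltepsom{q}}\Harmnepsom{q}$ transfers to $\zeta$ being $\Ltmuom{N-q}$-orthogonal to the $\mu$-Dirichlet forms, and Corollary \ref{constesthilbertdf}, applied with $q$, $\eps$, $\omega$ replaced by $N-q$, $\mu$, $\zeta$, yields
$$\norm{\zeta}_{\Ltmuom{N-q}}^2
\leq\Cedctmu{N-q}^2\norm{\edc\zeta}_{\Ltom{N-q+1}}^2
+\tCedctmu{N-q-1}^2\norm{\cd\mu\,\zeta}_{\Ltom{N-q-1}}^2.$$
Substituting the three identities already verified in Corollary \ref{maintheostar}, namely $\norm{\zeta}_{\Ltmuom{N-q}}=\norm{\omega}_{\Ltepsom{q}}$, $\norm{\edc\zeta}_{\Ltom{N-q+1}}=\norm{\cdc\eps\,\omega}_{\Ltom{q-1}}$, and $\norm{\cd\mu\,\zeta}_{\Ltom{N-q-1}}=\norm{\ed\omega}_{\Ltom{q+1}}$, gives exactly the first displayed estimate; the second follows by replacing both constants with $\Ctmu{N-q}=\max\{\tCedctmu{N-q-1},\Cedctmu{N-q}\}$.

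The four individual Friedrichs/Poincar\'e type estimates are obtained in the same manner, as the pullbacks through $*$ of the four reduced-operator estimates of Corollary \ref{helmrefinedcor} written for index $N-q$ and weight $\mu$; equivalently, one applies Lemma \ref{helmrefined} directly to the normal-boundary-condition operators $\tilde\A_{0}=\ed$ and $\tilde\A_{1}=\ed$, whose intersection domain $\eDom{q}\cap\eps^{-1}\cDcom{q}\cptemb\Ltepsom{q}$ is compact by the normal version of Weck's selection theorem, and identifies the resulting constants $c_{\tilde\A_{0}}$ and $c_{\tilde\A_{1}}$ with $\Cedctmu{N-q}$ and $\tCedctmu{N-q-1}$, respectively. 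The hard part will be purely organizational: one must check once and for all that $*\,\eps$ is a genuine isometry between the two weighted $L^2$-spaces, so that the best constants are preserved under the transformation, and then keep careful track of the index shift $q\mapsto N-q$ together with the interchange of $\ed$ and $\cd$ (and of the circled and uncircled domains) forced by the star. Once these bookkeeping identifications are fixed, each estimate is a direct pullback of its tangential analogue and requires no new argument.
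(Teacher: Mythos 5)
Your proposal is correct and follows exactly the route the paper intends: the paper gives no separate proof, stating only that ``the same transformation technique or just repeating the previous arguments'' yields the result, and you carry out precisely both of these options --- the Hodge-star pullback $\zeta=*\,\eps\,\omega$ with $\mu=(-1)^{q(N-q)}*\eps^{-1}*$ as in the proof of Corollary \ref{maintheostar} (correctly noting that, absent convexity, the isometry must also carry $\Harmnepsom{q}$ onto the $\mu$-Dirichlet forms so the orthogonality constraint transfers), and alternatively the direct application of the functional-analytic toolbox to $\tilde\A_{0},\tilde\A_{1}$. No gaps.
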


\begin{cor}
\label{constestremoveepscdc}
It holds
$$\frac{\Cedct{N-q-1}}{\epsu}\leq\tCedctmu{N-q-1}\leq\Cedct{N-q-1}\epso,\qquad
\frac{\Cedct{N-q}}{\epso}\leq\Cedctmu{N-q}\leq\Cedct{N-q}\epsu,$$
and
$$\frac{\min\{\Cedct{N-q-1},\Cedct{N-q}\}}{\epsh}
\leq\Ctmu{N-q}
=\max\{\tCedctmu{N-q-1},\Cedctmu{N-q}\}
\leq\max\{\Cedct{N-q-1},\Cedct{N-q}\}\epsh.$$
\end{cor}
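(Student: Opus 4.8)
The plan is to deduce this corollary from Lemma \ref{constestremoveeps} by the very same Hodge-star transformation already employed in the proof of Corollary \ref{maintheostar}, rather than reproving anything from scratch. The crucial observation is that the constants $\tCedctmu{N-q-1}$, $\Cedctmu{N-q}$, and $\Ctmu{N-q}$ occurring here are nothing but the tangential-type constants of Corollary \ref{helmrefinedcor} and Corollary \ref{constesthilbertdf} for the admissible transformation $\mu=(-1)^{q(N-q)}*\eps^{-1}*$ at form order $N-q$. Since Lemma \ref{constestremoveeps} was established for an \emph{arbitrary} admissible transformation at an arbitrary order, it applies verbatim with $\eps$ replaced by $\mu$ and $q$ replaced by $N-q$, and the present corollary is then just the result of carrying out that substitution.

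First I would recall from the proof of Corollary \ref{maintheostar} that $\mu$ is admissible and inherits \eqref{epsuoonedf}, \eqref{epsuotwodf} from $\eps$ with the lower and upper bounds interchanged. Concretely, writing $\unl\mu,\ovl\mu,\hat\mu$ for the bounds of $\mu$, one has $\unl\mu=\epso$, $\ovl\mu=\epsu$, and therefore $\hat\mu=\max\{\unl\mu,\ovl\mu\}=\epsh$. This is the only genuine computation needed: because $\mu$ is the $*$-conjugate of $\eps^{-1}$ and $*$ is an isometry between $\Ltom{q}$ and $\Ltom{N-q}$, the spectral bounds of $\mu$ coincide with those of $\eps^{-1}$, which are exactly $\epso$ and $\epsu$; this is precisely the estimate made explicit in the proof of Corollary \ref{maintheostar}.

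Next I would substitute into the two chains supplied by Lemma \ref{constestremoveeps}. Under $\eps\mapsto\mu$, $q\mapsto N-q$ and using $\ovl\mu=\epsu$, $\unl\mu=\epso$, the first chain $\Cedct{q-1}/\epso\leq\tCedcteps{q-1}\leq\Cedct{q-1}\epsu$ turns into $\Cedct{N-q-1}/\epsu\leq\tCedctmu{N-q-1}\leq\Cedct{N-q-1}\epso$, and the second chain dualizes in the same manner to the stated bounds for $\Cedctmu{N-q}$. The unweighted constants appearing throughout are the tangential constants $\Cedct{N-q-1}$, $\Cedct{N-q}$; these carry no boundary-condition ambiguity, since for $\eps=\id$ Lemma \ref{helmrefinedcorlem} identifies the tangential and normal versions. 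Combining the two chains with the identity $\Ctmu{N-q}=\max\{\tCedctmu{N-q-1},\Cedctmu{N-q}\}$ (from Corollary \ref{constesthilbertdfcdc}, itself the $\mu$-image of the corresponding identity in Lemma \ref{constestremoveeps}) and with $\hat\mu=\epsh$ then yields the displayed two-sided estimate for $\Ctmu{N-q}$.

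I expect no serious obstacle here: the single point demanding care is the bookkeeping of the $\epsu\leftrightarrow\epso$ interchange induced by passing from $\eps$ to $\mu=(-1)^{q(N-q)}*\eps^{-1}*$, together with verifying that the unweighted constants do not pick up a spurious dependence on the boundary condition. As the paper's own phrasing suggests, one could instead bypass the transformation entirely and simply repeat the short Helmholtz-decomposition and orthogonality argument of Lemma \ref{constestremoveeps} directly for the operators $\tilde\A_{0},\tilde\A_{0}^{*},\tilde\A_{1},\tilde\A_{1}^{*}$, invoking the normal-boundary analogues of Corollary \ref{helmrefinedcor} and Lemma \ref{helmrefinedcorlem}; this equally routine route produces the identical constants.
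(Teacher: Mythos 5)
Your proposal is correct and follows exactly the route the paper intends: the paper itself offers no separate proof of this corollary but states that it follows by "the same transformation technique or just repeating the previous arguments," and your substitution $\eps\mapsto\mu$, $q\mapsto N-q$ into Lemma \ref{constestremoveeps}, with the $\epsu\leftrightarrow\epso$ interchange ($\unl{\mu}=\epso$, $\ovl{\mu}=\epsu$, $\hat{\mu}=\epsh$) taken verbatim from the proof of Corollary \ref{maintheostar}, is precisely that technique. The bookkeeping in both chains and in the $\max/\min$ estimate checks out.
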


\subsection{Some Remarks}

\begin{rem}
\mylabel{polyhedrarem}
Our results extend also to all possibly non-convex polyhedra which allow the 
$\Hoom{q}$-regularity in Lemma \ref{gaffneyconvex}
of the Maxwell spaces $\eDcom{q}\cap\cDom{q}$ and $\eDom{q}\cap\cDcom{q}$
or to domains whose boundaries consist of combinations of convex boundary parts 
and polygonal parts which allow the $\Hoom{q}$-regularity.
Such domains exist, depending on the special type of the singularities, 
which are not allowed to by too pointy, see, e.g., \cite{saranenmaxkegel,saranenmaxnichtglatt}.
It is well known that \eqref{gaffneyeq} even holds
for $\omega\in\Hoom{q}\cap\eDcom{q}$ or $\omega\in\Hoom{q}\cap\cDcom{q}$ 
if $\om$ is a polyhedron, since the unit normal is piecewise constant and hence the curvature is zero.
\end{rem}

\begin{rem}
\mylabel{convexandmorerem}
Let $\om$ be additionally convex and 
let us recall $\Cn{q}=\Ct{N-q}$ and \eqref{cfcqmocqcp}, especially
$$\cpc=\Ct{0}=\Cn{N}\leq\Ct{q},\Cn{q}\leq\Ct{N}=\Cn{0}=\cp\leq\frac{\diam(\om)}{\pi}.$$
\begin{itemize}
\item[\bf(i)] 
In generell, we conjecture $\cf<\Ct{q},\Cn{q}<\cp$ for $1\leq q\leq N-1$.
\item[\bf(ii)] 
As a byproduct, by
$$0<\mu_{2}=\frac{1}{\cp^2}\leq\frac{1}{\Ct{q}^2}\leq\frac{1}{\cf^2}=\lambda_{1}$$
we have shown a new proof of the well known fact, that
the first Dirichlet eigenvalue of the negative Laplacian $\lambda_{1}$
is not smaller than the second Neumann eigenvalue of the negative Laplacian $\mu_{2}$.
\end{itemize}
\end{rem}

\begin{rem}
\mylabel{nonconvexdomrem}
Our results extend to a certain class of non-convex domains,
so-called one-chart domains, as well.
For this, as before, let $\om\subset\rN$ be a bounded weak Lipschitz domain
and let $\Xi\subset\rN$ be a bounded and convex domain,
e.g., the unit cube or unit ball.
For example, $\om$ could be an L-shaped domain or a Fich\`era corner.
Moreover, we assume that there exists an orientation preserving bi-Lipschitz transformation
$\Phi:\Xi\to\om$ with inverse $\Psi:=\Phi^{-1}:\om\to\Xi$.

Then for $\omega\in\eDcom{q}\cap\eps^{-1}\cDom{q}$ we have
$$\Phi^{*}\omega\in\eDc{q}(\Xi)\cap\mu^{-1}\cD{q}(\Xi),\qquad
\mu:=(-1)^{qN-1}*\Phi^{*}*\eps\,\Psi^{*},$$
with
\begin{align}
\mylabel{PhisdLip}
\edc\Phi^{*}\omega
=\Phi^{*}\edc\omega,\qquad
\cd\mu\,\Phi^{*}\omega
=\pm*\ed\Phi^{*}*\eps\,\omega
=*\,\Phi^{*}*\cd\eps\,\omega,
\end{align}
see Appendix \ref{proofbiliptrans} for a proof of \eqref{PhisdLip} in the bi-Lipschitz case.
By the transformation formula, straight forward estimates, 
which we will carry out in Appendix \ref{appremnonconvex} as well, and Theorem \ref{maintheo} we get 
$$\norm{\omega}_{\Ltom{q}}
\leq\Cteps{q}
\big(\norm{\edc\omega}_{\Ltom{q+1}}^2
+\norm{\cd\eps\,\omega}_{\Ltom{q-1}}^2\big)^{\oh},$$
where
$$\Cteps{q}\leq c_{N}^3c_{\na\Phi,\na\Psi}^3\,\hat{\eps}\,c_{\mathsf{p},\Xi}$$
and $c_{\mathsf{p},\Xi}$ is the Poincar\'e constant for the convex domain $\Xi$,
$c_{N}$ depends just on $N$, and $c_{\na\Phi,\na\Phi}$ 
just on bounds for $\na\Phi$ and $\na\Psi$,
see \eqref{roughconstdef} in Appendix \ref{appremnonconvex} for more details.
These constants can be refined, if one takes a closer look 
at the actual dependence on $q$ and special algebraic operations on $\na\Phi$ and $\na\Psi$.
In Appendix \ref{appremnonconvexcl} we will present sharper estimates for the special case
$N=3$ and $q=1$ of vector proxy fields $\vec\omega$.

Using a partition of unity,
we can even extend our results to general bounded weak Lipschitz domains $\om\subset\rN$.
\end{rem}

\begin{acknow}
We cordially thank the anonymous referee for a very careful reading 
and valuable suggestions for improving the paper.
\end{acknow}

\bibliographystyle{plain} 
\bibliography{paule}

\appendix
\section{Proof of Lemma \ref{gaffneyconvex}}
\label{appproofconvex}

By the $*$-operator it is sufficient to discuss, e.g., $\omega\in\eDom{q}\cap\cDcom{q}$.
For a proof we follow the nice book of Grisvard, see \cite[Theorem 3.2.1.2, Theorem 3.2.1.3]{grisvardbook}.
This proof has been carried out in
\cite[Corollary 3.6, Theorem 3.9]{giraultraviartbook}
and \cite[Theorem 2.17]{amrouchebernardidaugegiraultvectorpot}
for the Maxwell case and $N=3$.
Our proof will avoid the misleading notion of traces and solutions of second order elliptic systems.
Let us note that in \cite[p. 834]{amrouchebernardidaugegiraultvectorpot}
the proof for $X_{N}(\om)$ is wrong. One cannot work in the space
$V_{T}(\om_{k})$ due to the solenoidal condition.
Working in the space $X_{T}(\om_{k})$ is needed, but this destroys their argument 
for the second order elliptic system for $\zeta$.
Our approach corrects these unconsistencies.

Let us pick a sequence of increasing, convex, and $\ci$-smooth subdomains $(\om_{n})\subset\om$
converging to $\om$, i.e.,
$$\om_{n}\subset\overline{\om}_{n}\subset\om_{n+1}\subset\dots\subset\om,\qquad
\dist(\om,\om_{n})=\dist(\p\om,\p\om_{n})\to0,$$
see, e.g., \cite[Lemma 3.2.1.1]{grisvardbook}. 
Of course, $\ct$-smooth is also sufficient.
For $\om_{n}$ we find
$\zeta_{n}\in\eD{q-1}(\om_{n})$ such that for all $\varphi\in\eD{q-1}(\om_{n})$
\begin{align}
\mylabel{pdedefomn}
\scp{\zeta_{n}}{\varphi}_{\eD{q-1}(\om_{n})}
=\scp{\cd\omega}{\varphi}_{\Lt{q-1}(\om_{n})}
+\scp{\omega}{\ed\varphi}_{\Lt{q}(\om_{n})},
\end{align}
which is a trivially well defined problem. 
Note $\scp{\zeta_{n}}{\varphi}_{\eD{q-1}(\om_{n})}
=\scp{\zeta_{n}}{\varphi}_{\Lt{q-1}(\om_{n})}
+\scp{\ed\zeta_{n}}{\ed\varphi}_{\Lt{q}(\om_{n})}$. Hence
\begin{align*}
\scp{\omega-\ed\zeta_{n}}{\ed\varphi}_{\Lt{q}(\om_{n})}
=\scp{\zeta_{n}-\cd\omega}{\varphi}_{\Lt{q-1}(\om_{n})}
\end{align*}
for all $\varphi\in\eD{q-1}(\om_{n})$, showing by \eqref{cDccharac} that
$\omega_{n}:=\omega-\ed\zeta_{n}\in\cDc{q}(\om_{n})$ and $\cd\omega_{n}=\cd\omega-\zeta_{n}$.
Moreover, $\omega_{n}\in\eD{q}(\om_{n})$ with $\ed\omega_{n}=\ed\omega$.
By \eqref{friedrichsgaffneysmoothconvex}
we have $\omega_{n}\in\genHo{q+1}(\om_{n})$ with
\begin{align}
\label{naomeganest}
\norm{\na\vec{\omega}_{n}}_{\lt(\om_{n})}^2
&\leq\norm{\ed\omega_{n}}_{\Lt{q+1}(\om_{n})}^2
+\norm{\cd\omega_{n}}_{\Lt{q-1}(\om_{n})}^2
=\norm{\ed\omega}_{\Lt{q+1}(\om_{n})}^2
+\norm{\cd\omega-\zeta_{n}}_{\Lt{q-1}(\om_{n})}^2.
\end{align}
By setting $\varphi=\zeta_{n}$ in \eqref{pdedefomn} we see
\begin{align}
\label{zetanest}
\begin{split}
\norm{\zeta_{n}}_{\eD{q-1}(\om_{n})}^2
&=\scp{\cd\omega}{\zeta_{n}}_{\Lt{q-1}(\om_{n})}
+\scp{\omega}{\ed\zeta_{n}}_{\Lt{q}(\om_{n})}\\
&\leq\norm{\cd\omega}_{\Lt{q-1}(\om_{n})}\norm{\zeta_{n}}_{\Lt{q-1}(\om_{n})}
+\norm{\omega}_{\Lt{q}(\om_{n})}\norm{\ed\zeta_{n}}_{\Lt{q}(\om_{n})}
\leq\norm{\omega}_{\cD{q}(\om_{n})}\norm{\zeta_{n}}_{\eD{q-1}(\om_{n})}
\end{split}
\end{align}
and thus
\begin{align}
\label{zetanesttwo}
\norm{\zeta_{n}}_{\eD{q-1}(\om_{n})}
&\leq\norm{\omega}_{\cD{q}(\om_{n})}
\leq\norm{\omega}_{\cDom{q}}.
\end{align}
Combining \eqref{naomeganest} and the equation part of \eqref{zetanest} we observe
\begin{align*}
\norm{\vec{\omega}_{n}}_{\ho(\om_{n})}^2
&=\norm{\omega_{n}}_{\Lt{q}(\om_{n})}^2
+\norm{\na\vec{\omega}_{n}}_{\lt(\om_{n})}^2
\leq\norm{\omega_{n}}_{\Lt{q}(\om_{n})}^2
+\norm{\ed\omega}_{\Lt{q+1}(\om_{n})}^2
+\norm{\cd\omega-\zeta_{n}}_{\Lt{q-1}(\om_{n})}^2\\
&=\norm{\omega}_{\Lt{q}(\om_{n})}^2
+\norm{\ed\zeta_{n}}_{\Lt{q}(\om_{n})}^2
+\norm{\ed\omega}_{\Lt{q+1}(\om_{n})}^2
+\norm{\cd\omega}_{\Lt{q-1}(\om_{n})}^2
+\norm{\zeta_{n}}_{\Lt{q-1}(\om_{n})}^2\\
&\qquad-2\scp{\omega}{\ed\zeta_{n}}_{\Lt{q}(\om_{n})}
-2\scp{\cd\omega}{\zeta_{n}}_{\Lt{q-1}(\om_{n})}\\
&=\norm{\omega}_{\eD{q}(\om_{n})\cap\cD{q}(\om_{n})}^2
+\norm{\zeta_{n}}_{\eD{q}(\om_{n})}^2
-2\norm{\zeta_{n}}_{\eD{q}(\om_{n})}^2
\leq\norm{\omega}_{\eD{q}(\om_{n})\cap\cD{q}(\om_{n})}^2
\end{align*}
and therefore
\begin{align}
\label{hoomeganest}
\begin{split}
\norm{\vec{\omega}_{n}}_{\ho(\om_{n})}
\leq\norm{\omega}_{\eD{q}(\om_{n})\cap\cD{q}(\om_{n})}
\leq\norm{\omega}_{\eDom{q}\cap\cDom{q}}.
\end{split}
\end{align}
Let us denote the extension by zero to $\om$ by $\tilde{\cdot}$.
Then by \eqref{zetanesttwo} and \eqref{hoomeganest}
the sequences $(\tilde{\zeta}_{n})$, $(\widetilde{\ed\zeta}_{n})$,
and $(\tilde{\vec{\omega}}_{n})$, $(\widetilde{\na\vec{\omega}}_{n})$
are bounded in $\Ltom{q-1}$, $\Ltom{q}$, resp. $\ltom$ and
we can extract weakly converging subsequences, again denoted by the index $n$, such that
\begin{align*}
\tilde{\zeta}_{n}
&\wto{\Ltom{q-1}}\zeta\in\Ltom{q-1},
&
\tilde{\vec{\omega}}_{n}
&\wto{\ltom}\vec{\hat{\omega}}\in\ltom,\\
(\widetilde{\ed\zeta}_{n})
&\wto{\Ltom{q}}\xi\in\Ltom{q},
&
\widetilde{\na\vec{\omega}}_{n}
&\wto{\ltom}\hat\Theta\in\ltom.
\end{align*}
Let $\psi\in\cicom$ and $n$ be large enough such that $\supp\psi\subset\om_{n}$.
Then $\psi\in\cic(\om_{n})$ and we calculate for $i=1,\dots,N$ and 
the $\ell$-th component $\vec{\hat{\omega}}_{\ell}$ of $\vec{\hat{\omega}}$
\begin{align*}
\scpltom{\vec{\hat{\omega}}_{\ell}}{\p_{i}\psi}
\ot\scpltom{\tilde{\vec{\omega}}_{n,\ell}}{\p_{i}\psi}
&=\scp{\vec{\omega}_{n,\ell}}{\p_{i}\psi}_{\lt(\om_{n})}\\
&=-\scp{\p_{i}\vec{\omega}_{n,\ell}}{\psi}_{\lt(\om_{n})}
=-\scpltom{\widetilde{\p_{i}\vec{\omega}}_{n,\ell}}{\psi}
\to-\scpltom{\hat{\Theta}_{i,\ell}}{\psi},
\end{align*}
yielding $\vec{\hat{\omega}}\in\hoom$ and $\na\vec{\hat{\omega}}=\hat{\Theta}$.
Analogously we obtain for $\phi\in\Cicom{q}$
with $\phi\in\Cic{q}(\om_{n})$ for $n$ large enough
\begin{align*}
\scp{\zeta}{\cd\phi}_{\Ltom{q-1}}
\ot\scp{\tilde{\zeta}_{n}}{\cd\phi}_{\Ltom{q-1}}
&=\scp{\zeta_{n}}{\cd\phi}_{\Lt{q-1}(\om_{n})}\\
&=-\scp{\ed\zeta_{n}}{\phi}_{\Lt{q}(\om_{n})}
=-\scp{\widetilde{\ed\zeta}_{n}}{\phi}_{\Ltom{q}}
\to-\scp{\xi}{\phi}_{\Ltom{q}},
\end{align*}
showing $\zeta\in\eDom{q-1}$ and $\ed\zeta=\xi$.
Moreover, for $\varphi\in\eDom{q-1}\subset\eD{q-1}(\om_{n})$
we have by \eqref{pdedefomn}
\begin{align*}
\scp{\zeta}{\varphi}_{\eDom{q-1}}
&=\scp{\zeta}{\varphi}_{\Ltom{q-1}}
+\scp{\ed\zeta}{\ed\varphi}_{\Ltom{q}}
\ot\scp{\tilde{\zeta}_{n}}{\varphi}_{\Ltom{q-1}}
+\scp{\widetilde{\ed\zeta}_{n}}{\ed\varphi}_{\Ltom{q}}
=\scp{\zeta_{n}}{\varphi}_{\eD{q-1}(\om_{n})}\\
&=\scp{\cd\omega}{\varphi}_{\Lt{q-1}(\om_{n})}
+\scp{\omega}{\ed\varphi}_{\Lt{q}(\om_{n})}
\to\scp{\cd\omega}{\varphi}_{\Ltom{q-1}}
+\scp{\omega}{\ed\varphi}_{\Ltom{q}}
=0,
\end{align*}
as $\omega\in\cDcom{q}$, where the last convergence follows by Lebesgue's dominated convergence theorem.
For $\varphi=\zeta$ we get $\norm{\zeta}_{\eDom{q-1}}=0$, i.e., $\zeta=0$.
Furthermore, we observe by \eqref{hoomeganest}
\begin{align*}
\norm{\vec{\hat\omega}}_{\hoom}^2
&=\scpltom{\vec{\hat\omega}}{\vec{\hat\omega}}
+\scpltom{\na\vec{\hat\omega}}{\na\vec{\hat\omega}}
\ot\scpltom{\vec{\hat\omega}}{\tilde{\vec{\omega}}_{n}}
+\scpltom{\na\vec{\hat\omega}}{\widetilde{\na\vec{\omega}}_{n}}\\
&=\scp{\vec{\hat\omega}}{\vec{\omega}_{n}}_{\lt(\om_{n})}
+\scp{\na\vec{\hat\omega}}{\na\vec{\omega}_{n}}_{\lt(\om_{n})}
\leq\norm{\vec{\hat\omega}}_{\ho(\om_{n})}
\norm{\vec{\omega}_{n}}_{\ho(\om_{n})}
\leq\norm{\vec{\hat\omega}}_{\hoom}
\norm{\omega}_{\eDom{q}\cap\cDom{q}},
\end{align*}
showing
\begin{align}
\mylabel{hoomnormomegahat}
\norm{\vec{\hat\omega}}_{\hoom}
\leq\norm{\omega}_{\eDom{q}\cap\cDom{q}}.
\end{align}
Finally, we have $\omega=\omega_{n}+\ed\zeta_{n}$ in $\om_{n}$, i.e., in $\om$
$$\chi_{\om_{n}}\omega
=\tilde{\omega}_{n}+\widetilde{\ed\zeta}_{n}
\wto{\Ltom{q}}\hat\omega+\ed\zeta
=\hat\omega.$$
On the other hand, by Lebesgue's dominated convergence theorem we see
$\chi_{\om_{n}}\omega\to\omega$ in $\Ltom{q}$. Thus
$\omega=\hat\omega\in\Hoom{q}$ and by \eqref{hoomnormomegahat}
\begin{align*}
\norm{\omega}_{\Hoom{q}}
&=\norm{\vec{\hat\omega}}_{\hoom}
\leq\norm{\omega}_{\eDom{q}\cap\cDom{q}},
\end{align*}
especially, 
\begin{align*}
\normltom{\na\vec\omega}^2
&\leq\norm{\ed\omega}_{\Ltom{q+1}}^2
+\norm{\cd\omega}_{\Ltom{q-1}}^2.
\end{align*}

\section{Calculations for Remark \ref{nonconvexdomrem}}
\label{appremnonconvex}

For a multi index $I$ of length $|I|=q$ (not necessarily ordered) it holds
\begin{align*}
\Phi^{*}\ed x^{I}
&=\Phi^{*}(\ed x^{i_{1}}\wedge\dots\wedge\ed x^{i_{q}})
=(\Phi^{*}\ed x^{i_{1}})\wedge\dots\wedge(\Phi^{*}\ed x^{i_{q}})
=(\ed\Phi_{i_{1}})\wedge\dots\wedge(\ed\Phi_{i_{q}})
=\ed\Phi^{I}\\
&=\sum_{j_{1},\dots,j_{q}}\p_{j_{1}}\Phi_{i_{1}}\dots\p_{j_{q}}\Phi_{i_{q}}
\ed x^{j_{1}}\wedge\dots\wedge\ed x^{j_{q}}
=\sum_{|J|=q}\p_{J}\Phi_{I}\ed x^{J}
\end{align*}
and especially
$$\Phi^{*}(\ed x^{1}\wedge\dots\wedge\ed x^{N})
=\det(\na\Phi)\ed x^{1}\wedge\dots\wedge\ed x^{N}.$$
For multi indices $I,J$ of length $q$ we have
\begin{align*}
(\Phi^{*}\ed x^{I})\wedge*(\Phi^{*}\ed x^{J})
&=\sum_{|K|=|L|=q}\p_{K}\Phi_{I}\p_{L}\Phi_{J}\ed x^{K}\wedge*\ed x^{L}\\
&=\sum_{|K|=q}(-1)^{\sigma_{K}}\p_{K}\Phi_{I}\p_{K}\Phi_{J}\ed x^{1}\wedge\dots\wedge\ed x^{N}.
\end{align*}
Hence for 
$$\omega=\sum_{I}\omega_{I}\ed x^{I},\quad
\Phi^{*}\omega=\sum_{I}\tilde\omega_{I}\,\Phi^{*}\ed x^{I},\quad
\tilde\omega:=\sum_{I}\tilde\omega_{I}\ed x^{I},\qquad
\tilde\omega_{I}:=\omega_{I}\circ\Phi$$
we compute
\begin{align*}
*\,|\omega|^2
=\omega\wedge*\,\bar{\omega}
&=\sum_{I,J}\omega_{I}\bar{\omega}_{J}\ed x^{I}\wedge*\ed x^{J}
=\sum_{I}\omega_{I}\bar{\omega}_{I}\ed x^{I}\wedge*\ed x^{I}
=|\vec\omega|^2\ed x^{1}\wedge\dots\wedge\ed x^{N},\\
*\,|\Phi^{*}\omega|^2
=\Phi^{*}\omega\wedge*\,\Phi^{*}\bar{\omega}
&=\sum_{I,J}\tilde\omega_{I}\bar{\tilde\omega}_{J}(\Phi^{*}\ed x^{I})\wedge*(\Phi^{*}\ed x^{J})\\
&=\sum_{I,J}\sum_{|K|=q}(-1)^{\sigma_{K}}\tilde\omega_{I}\bar{\tilde\omega}_{J}
\p_{K}\Phi_{I}\p_{K}\Phi_{J}\ed x^{1}\wedge\dots\wedge\ed x^{N},
\end{align*}
and thus
\begin{align*}
\norm{\vec\omega}_{\ltom}^2
&=\norm{\omega}_{\Ltom{q}}^2
=\int_{\om}*\,|\omega|^2
=\int_{\om}|\vec\omega|^2\ed x^{1}\wedge\dots\wedge\ed x^{N}
=\int_{\Xi}|\vec{\tilde\omega}|^2\Phi^{*}(\ed x^{1}\wedge\dots\wedge\ed x^{N})\\
&=\int_{\Xi}\det(\na\Phi)|\vec{\tilde\omega}|^2\ed x^{1}\wedge\dots\wedge\ed x^{N}
=\int_{\Xi}\det(\na\Phi)*|\tilde\omega|^2
=\int_{\Xi}\det(\na\Phi)|\vec{\tilde\omega}|^2,\\
\norm{\longvec{\Phi^{*}\omega}}_{\lt(\Xi)}^2
&=\norm{\Phi^{*}\omega}_{\Lt{q}(\Xi)}^2
=\int_{\Xi}*\,|\Phi^{*}\omega|^2
=\sum_{I,J}\sum_{|K|=q}(-1)^{\sigma_{K}}
\int_{\Xi}\tilde\omega_{I}\bar{\tilde\omega}_{J}
\p_{K}\Phi_{I}\p_{K}\Phi_{J}\ed x^{1}\wedge\dots\wedge\ed x^{N}\\
&=\sum_{I,J}\sum_{|K|=q}(-1)^{\sigma_{K}}
\int_{\Xi}\tilde\omega_{I}\bar{\tilde\omega}_{J}\p_{K}\Phi_{I}\p_{K}\Phi_{J}.
\end{align*}
Therefore, we get
\begin{align*}
\min_{\Xi}\det(\na\Phi)\,
\norm{\tilde\omega}_{\Lt{q}(\Xi)}^2
\leq\norm{\omega}_{\Ltom{q}}^2
&\leq\max_{\Xi}\det(\na\Phi)\,
\norm{\tilde\omega}_{\Lt{q}(\Xi)}^2,\\
\norm{\Phi^{*}\omega}_{\Lt{q}(\Xi)}^2
&\leq N^{q}\binom{N}{q}^{2}
\max_{\Xi}|\na\Phi|^{2q}\,
\norm{\tilde\omega}_{\Lt{q}(\Xi)}^2,
\end{align*}
where the second estimate is quite rough.
Combing both we see
\begin{align}
\label{normPhisomegaomega}
\norm{\Phi^{*}\omega}_{\Lt{q}(\Xi)}^2
&\leq c_{q,N,\na\Phi}\norm{\omega}_{\Ltom{q}}^2,
&
c_{q,N,\na\Phi}
&:=N^{q}\binom{N}{q}^{2}\frac{\max_{\Xi}|\na\Phi|^{2q}}{\min_{\Xi}\det(\na\Phi)},\\
\label{normPsiszetazeta}
\norm{\Psi^{*}\zeta}_{\Ltom{q}}^2
&\leq c_{q,N,\na\Psi}\norm{\zeta}_{\Lt{q}(\Xi)}^2,
&
c_{q,N,\na\Psi}
&:=N^{q}\binom{N}{q}^{2}\frac{\max_{\om}|\na\Psi|^{2q}}{\min_{\om}\det(\na\Psi)}
\end{align}
and with $\omega=\Psi^{*}\Phi^{*}\omega$
\begin{align*}
\norm{\omega}_{\Ltom{q}}^2
\leq c_{q,N,\na\Psi}
\norm{\Phi^{*}\omega}_{\Lt{q}(\Xi)}^2,\qquad
\norm{\zeta}_{\Lt{q}(\Xi)}^2
\leq c_{q,N,\na\Phi}\norm{\Psi^{*}\zeta}_{\Ltom{q}}^2.
\end{align*}
Now we calculate by Theorem \ref{maintheo}
\begin{align}
\label{calcmaxestomxidf}
\begin{split}
\norm{\omega}_{\Ltom{q}}^2
&\leq c_{q,N,\na\Psi}\norm{\Phi^{*}\omega}_{\Lt{q}(\Xi)}^2
\leq c_{q,N,\na\Psi}c_{\mathsf{p},\Xi}^2\,\hat{\mu}^2
\big(\norm{\edc\Phi^{*}\omega}_{\Lt{q+1}(\Xi)}^2
+\norm{\cd\mu\Phi^{*}\omega}_{\Lt{q-1}(\Xi)}^2\big)\\
&=c_{q,N,\na\Psi}c_{\mathsf{p},\Xi}^2\,\hat{\mu}^2
\big(\norm{\Phi^{*}\edc\omega}_{\Lt{q+1}(\Xi)}^2
+\norm{\Phi^{*}*\cd\eps\,\omega}_{\Lt{N-q+1}(\Xi)}^2\big)\\
&\leq c_{q,N,\na\Psi}c_{\mathsf{p},\Xi}^2\,\hat{\mu}^2
\big(c_{q+1,N,\na\Phi}\norm{\edc\omega}_{\Ltom{q+1}}^2
+c_{N-q+1,N,\na\Phi}\norm{\cd\eps\,\omega}_{\Ltom{q-1}}^2\big)\\
&\leq c_{q,N,\na\Psi}\max\{c_{q+1,N,\na\Phi},c_{N-q+1,N,\na\Phi}\}c_{\mathsf{p},\Xi}^2\,\hat{\mu}^2
\big(\norm{\edc\omega}_{\Ltom{q+1}}^2
+\norm{\cd\eps\,\omega}_{\Ltom{q-1}}^2\big)\\
&\leq c_{N}^4c_{\na\Phi,\na\Psi}^4\,\hat{\mu}^2c_{\mathsf{p},\Xi}^2
\big(\norm{\edc\omega}_{\Ltom{q+1}}^2
+\norm{\cd\eps\,\omega}_{\Ltom{q-1}}^2\big),
\end{split}
\end{align}
i.e.,
$$\Cteps{q}\leq c_{N}^2c_{\na\Phi,\na\Psi}^2\,\hat{\mu}\,c_{\mathsf{p},\Xi},$$
with very rough constants
\begin{align}
\mylabel{roughconstdef}
c_{N}:=N^{\nicefrac{N}{2}}N!,\qquad
c_{\na\Phi,\na\Psi}:=
\frac{\max\big[\max_{\Xi}|\na\Phi|,\max_{\om}|\na\Psi|,1\big]^{N}}
{\min\big[\min_{\Xi}\sqrt{\det(\na\Phi)},\min_{\om}\sqrt{\det(\na\Psi)},1\big]}.
\end{align}
So, it remains to estimate $\hat\mu$. For this we estimate for $\Phi^{*}\omega\in\Lt{q}(\Xi)$
\begin{align*}
\scp{\mu\,\Phi^{*}\omega}{\Phi^{*}\omega}_{\Lt{q}(\Xi)}
&=\pm\scp{*\,\Phi^{*}*\eps\,\omega}{\Phi^{*}\omega}_{\Lt{q}(\Xi)}
=\pm\scp{\Phi^{*}*\eps\,\omega}{*\,\Phi^{*}\omega}_{\Lt{N-q}(\Xi)}
=\pm\int_{\Xi}(\Phi^{*}*\eps\,\omega)\wedge(\Phi^{*}\bar\omega)\\
&=\pm\int_{\om}*\,\eps\,\omega\wedge\bar\omega
=\scp{\eps\,\omega}{\omega}_{\Ltom{q}}
\leq\epso^2\norm{\omega}_{\Ltom{q}}^2
\leq\epso^2c_{q,N,\na\Psi}\norm{\Phi^{*}\omega}_{\Lt{q}(\Xi)}^2,\\
\scp{\mu\,\Phi^{*}\omega}{\Phi^{*}\omega}_{\Lt{q}(\Xi)}
&=\scp{\eps\,\omega}{\omega}_{\Ltom{q}}
\geq\epsu^{-2}\norm{\omega}_{\Ltom{q}}^2
\geq\frac{1}{\epsu^2c_{q,N,\na\Phi}}\norm{\Phi^{*}\omega}_{\Lt{q}(\Xi)}^2,
\end{align*}
and observe
$$\hat\mu
\leq\max\{\epso\sqrt{c_{q,N,\na\Psi}},\epsu\sqrt{c_{q,N,\na\Phi}}\}
\leq\epsh\max\{\sqrt{c_{q,N,\na\Psi}},\sqrt{c_{q,N,\na\Phi}}\}
\leq\epsh\,c_{N}\,c_{\na\Phi,\na\Psi}.$$
Finally, this shows
$$\Cteps{q}\leq c_{N}^3c_{\na\Phi,\na\Psi}^3\,\hat{\eps}\,c_{\mathsf{p},\Xi}.$$

\subsection{Classical Vector Analysis}
\label{appremnonconvexcl}

Some of the latter estimates are very rough. Let us take a closer look 
at the classical case of vector analysis, i.e., at the special case of $N=3$ and $q=1$.
By \eqref{PhisdLip}, see also Appendix \ref{proofbiliptrans} for more details and a rigorous proof,
we know that $\omega$ in $\eDom{q}$ resp. $\eDcom{q}$ implies
$\Phi^{*}\omega$ in $\eD{q}(\Xi)$ resp. $\eDc{q}(\Xi)$ 
with $\ed\Phi^{*}\omega=\Phi^{*}\ed\omega$. 
For $N=3$ and $q=1$ this means for the vector proxy field 
$\vec\omega\in\mathring\hsymbol(\curl,\om)\cong\eDcom{1}$
that 
$$\longvec{\Phi^{*}\omega}=\na\Phi\,\vec{\tilde\omega}\in\mathring\hsymbol(\curl,\Xi)\cong\eDc{1}(\Xi)$$ 
with 
\begin{align}
\mylabel{veccurlformula}
\curl(\na\Phi\,\vec{\tilde\omega})
=\longvec{\ed\Phi^{*}\omega}
=\longvec{\Phi^{*}\ed\omega}
=\adjt(\na\Phi)\widetilde{\curl\vec\omega},
\end{align}
where $\adj(A)$ denotes the adjunct matrix of $A\in\rttt$.
If $A$ is invertible it holds $\adj(A)=(\det A)A^{-1}$.
For $q=N-1=2$ we have for the vector proxy field 
$\vec\omega\in\hsymbol(\div,\om)\cong\eDom{2}$
that 
$$\longvec{\Phi^{*}\omega}
=\adjt(\na\Phi)\,\vec{\tilde\omega}\in\hsymbol(\div,\Xi)\cong\eD{2}(\Xi)$$ 
with 
$$\div\big(\adjt(\na\Phi)\,\vec{\tilde\omega}\big)
=\longvec{\ed\Phi^{*}\omega}
=\longvec{\Phi^{*}\ed\omega}
=\det(\na\Phi)\widetilde{\div\vec\omega}.$$
Thus for $\vec\omega\in\mathring\hsymbol(\curl,\om)\cap\eps^{-1}\hsymbol(\div,\om)$
we have 
$$\na\Phi\,\vec{\tilde\omega}\in\mathring\hsymbol(\curl,\Xi)\cap\mu^{-1}\hsymbol(\div,\Xi),\qquad
\mu:=\frac{1}{\det(\na\Phi)}\adjt(\na\Phi)\,\tilde\eps\,\adj(\na\Phi),$$
with \eqref{veccurlformula} and 
\begin{align*}
\div(\mu\na\Phi\,\vec{\tilde\omega})
&=\div\big(\adjt(\na\Phi)\,\tilde\eps\,\vec{\tilde\omega}\big)
=\det(\na\Phi)\widetilde{\div\eps\,\vec\omega}.
\end{align*}
Now we can compute \eqref{calcmaxestomxidf} more carefully by
\begin{align}
\label{calcmaxestomxi}
\begin{split}
\norm{\vec\omega}_{\ltom}^2
&=\int_{\om}\norm{\vec\omega}^2
=\int_{\Xi}\det(\na\Phi)\norm{\vec{\tilde\omega}}^2
\leq\int_{\Xi}\det(\na\Phi)\bnorm{(\na\Phi)^{-1}}^2\norm{\na\Phi\,\vec{\tilde\omega}}^2\\
&=\int_{\Xi}\frac{1}{\det(\na\Phi)}\bnorm{\adj(\na\Phi)}^2\norm{\na\Phi\,\vec{\tilde\omega}}^2
\leq\hat{c}_{\na\Phi}^2\norm{\na\Phi\,\vec{\tilde\omega}}_{\lt(\Xi)}^2\\
&\leq\hat{c}_{\na\Phi}^2c_{\mathsf{m,t},\mu,\Xi}^2
\big(\bnorm{\curl(\na\Phi\,\vec{\tilde\omega})}_{\lt(\Xi)}^2
+\bnorm{\div(\mu\na\Phi\,\vec{\tilde\omega})}_{\lt(\Xi)}^2\big)\\
&=\hat{c}_{\na\Phi}^2c_{\mathsf{m,t},\mu,\Xi}^2
\big(\bnorm{\adjt(\na\Phi)\widetilde{\curl\vec\omega}}_{\lt(\Xi)}^2
+\bnorm{\det(\na\Phi)\widetilde{\div\eps\,\vec\omega}}_{\lt(\Xi)}^2\big)\\
&=\hat{c}_{\na\Phi}^2c_{\mathsf{m,t},\mu,\Xi}^2
\big(\int_{\Xi}\bnorm{\adjt(\na\Phi)\widetilde{\curl\vec\omega}}^2
+\int_{\Xi}\bnorm{\det(\na\Phi)\widetilde{\div\eps\,\vec\omega}}^2\big)\\
&\leq\hat{c}_{\na\Phi}^2c_{\mathsf{m,t},\mu,\Xi}^2
\big(\hat{c}_{\na\Phi}^2\int_{\Xi}\det(\na\Phi)\norm{\widetilde{\curl\vec\omega}}^2
+c_{\det(\na\Phi)}^2\int_{\Xi}\det(\na\Phi)\norm{\widetilde{\div\eps\,\vec\omega}}^2\big)\\
&=\hat{c}_{\na\Phi}^2c_{\mathsf{m,t},\mu,\Xi}^2
\big(\hat{c}_{\na\Phi}^2\norm{\curl\vec\omega}_{\ltom}^2
+c_{\det(\na\Phi)}^2\norm{\div\eps\,\vec\omega}_{\ltom}^2\big),
\end{split}
\end{align}
where
\begin{align*}
c_{\det(\na\Phi)}
&:=\max_{\Xi}\sqrt{\det(\na\Phi)},\\
\hat{c}_{\na\Phi}
&:=\max_{\Xi}\frac{\big|\adj(\na\Phi)\big|}{\sqrt{\det(\na\Phi)}}
=\max_{\Xi}\sqrt{\det(\na\Phi)}\big|(\na\Phi)^{-1}\big|
\leq c_{\det(\na\Phi)}\max_{\Xi}\big|(\na\Phi)^{-1}\big|.
\end{align*}
Therefore, we have
$$\cmteps
\leq\hat{c}_{\na\Phi}
\max\{\hat{c}_{\na\Phi},c_{\det(\na\Phi)}\}
c_{\mathsf{m,t},\mu,\Xi},\qquad
c_{\mathsf{m,t},\mu,\Xi}
\leq\hat\mu\,c_{\mathsf{p},\Xi},$$
and it remains to estimate $\hat\mu$.
For this we compute for $\vec{\tilde\omega}\in\lt(\Xi)$
\begin{align*}
\scp{\mu\,\vec{\tilde\omega}}{\vec{\tilde\omega}}_{\lt(\Xi)}
&=\int_{\Xi}\mu\,\vec{\tilde\omega}\cdot\vec{\bar{\tilde\omega}}
=\int_{\Xi}\det(\na\Phi)\big((\na\Phi)^{-\top}\tilde\eps\,(\na\Phi)^{-1}\vec{\tilde\omega}\big)\cdot\vec{\bar{\tilde\omega}}\\
&=\int_{\Xi}\det(\na\Phi)\big(\tilde\eps\,(\na\Phi)^{-1}\vec{\tilde\omega}\big)\cdot(\na\Phi)^{-1}\vec{\bar{\tilde\omega}}
=\int_{\om}(\eps\na\Psi\,\vec\omega)\cdot\na\Psi\,\vec{\bar{\omega}}
=\scp{\eps\na\Psi\,\vec\omega}{\na\Psi\,\vec\omega}_{\ltom}
\end{align*}
and estimate
\begin{align*}
\scp{\mu\,\vec{\tilde\omega}}{\vec{\tilde\omega}}_{\lt(\Xi)}
&\leq\epso^2\norm{\na\Psi\,\vec\omega}_{\ltom}^2
=\epso^2\int_{\om}\norm{\na\Psi\,\vec\omega}^2
=\epso^2\int_{\Xi}\det(\na\Phi)\norm{(\na\Phi)^{-1}\vec{\tilde\omega}}^2\\
&\leq\epso^2\int_{\Xi}\det(\na\Phi)\norm{(\na\Phi)^{-1}}^2\norm{\vec{\tilde\omega}}^2
\leq\epso^2\hat{c}_{\na\Phi}^2\int_{\Xi}\norm{\vec{\tilde\omega}}^2
=\epso^2\hat{c}_{\na\Phi}^2\norm{\vec{\tilde\omega}}_{\lt(\Xi)}^2,\\
\scp{\mu\,\vec{\tilde\omega}}{\vec{\tilde\omega}}_{\lt(\Xi)}
&\geq\epsu^{-2}\norm{\na\Psi\,\vec\omega}_{\ltom}^2
=\epsu^{-2}\int_{\Xi}\det(\na\Phi)\norm{(\na\Phi)^{-1}\vec{\tilde\omega}}^2\\
&\geq\epsu^{-2}\int_{\Xi}\frac{\det(\na\Phi)}{\norm{\na\Phi}^{2}}\norm{\vec{\tilde\omega}}^2
\geq\epsu^{-2}\check{c}_{\na\Phi}^{-2}\int_{\Xi}\norm{\vec{\tilde\omega}}^2
=\frac{1}{\epsu^2\check{c}_{\na\Phi}^2}\norm{\vec{\tilde\omega}}_{\lt(\Xi)}^2,
\end{align*}
where
\begin{align*}
\check{c}_{\na\Phi}
&:=\max_{\Xi}\frac{|\na\Phi|}{\sqrt{\det(\na\Phi)}}
=\frac{1}{\min_{\Xi}\frac{\sqrt{\det(\na\Phi)}}{|\na\Phi|}}.
\end{align*}
Finally, we obtain
$$\hat\mu
\leq\max\{\epso\,\hat{c}_{\na\Phi},\epsu\,\check{c}_{\na\Phi}\}
\leq\epsh\max\{\hat{c}_{\na\Phi},\check{c}_{\na\Phi}\}$$
and hence
\begin{align}
\mylabel{resultPhigen}
\cmteps
\leq\hat{c}_{\na\Phi}
\max\{\hat{c}_{\na\Phi},c_{\det(\na\Phi)}\}
\max\{\hat{c}_{\na\Phi},\check{c}_{\na\Phi}\}
\,\epsh\,c_{\mathsf{p},\Xi}
\leq\max\{\hat{c}_{\na\Phi},\check{c}_{\na\Phi},c_{\det(\na\Phi)}\}^3
\,\epsh\,c_{\mathsf{p},\Xi}.
\end{align}

Especially for $\Phi(x):=r\,x$ with $r>0$ we have
\begin{align*}
\norm{\vec\omega}_{\ltom}^2
&=\int_{\om}\norm{\vec\omega}^2
=\int_{\Xi}\det(\na\Phi)\norm{\vec{\tilde\omega}}^2
=r\norm{\na\Phi\,\vec{\tilde\omega}}_{\lt(\Xi)}^2\\
&\leq rc_{\mathsf{m,t},\mu,\Xi}^2
\big(\bnorm{\curl(\na\Phi\,\vec{\tilde\omega})}_{\lt(\Xi)}^2
+\bnorm{\div(\mu\na\Phi\,\vec{\tilde\omega})}_{\lt(\Xi)}^2\big)\\
&=rc_{\mathsf{m,t},\mu,\Xi}^2
\big(\int_{\Xi}\bnorm{\adjt(\na\Phi)\widetilde{\curl\vec\omega}}^2
+\int_{\Xi}\bnorm{\det(\na\Phi)\widetilde{\div\eps\,\vec\omega}}^2\big)\\
&=rc_{\mathsf{m,t},\mu,\Xi}^2
\big( r\int_{\Xi}\det(\na\Phi)\norm{\widetilde{\curl\vec\omega}}^2
+r^{3}\int_{\Xi}\det(\na\Phi)\norm{\widetilde{\div\eps\,\vec\omega}}^2\big)\\
&=r^{2}c_{\mathsf{m,t},\mu,\Xi}^2
\big(\norm{\curl\vec\omega}_{\ltom}^2
+r^2\norm{\div\eps\,\vec\omega}_{\ltom}^2\big)
\end{align*}
and
\begin{align*}
\scp{\mu\,\vec{\tilde\omega}}{\vec{\tilde\omega}}_{\lt(\Xi)}
&=\int_{\Xi}\mu\,\vec{\tilde\omega}\cdot\vec{\bar{\tilde\omega}}
=\int_{\Xi}\det(\na\Phi)\big((\na\Phi)^{-\top}\tilde\eps\,(\na\Phi)^{-1}\vec{\tilde\omega}\big)\cdot\vec{\bar{\tilde\omega}}
=r^{-2}\int_{\Xi}\det(\na\Phi)(\tilde\eps\,\vec{\tilde\omega})\cdot\vec{\bar{\tilde\omega}}\\
&=r^{-2}\int_{\om}(\eps\,\vec\omega)\cdot\,\vec{\bar{\omega}}
=r^{-2}\scp{\eps\,\vec\omega}{\,\vec\omega}_{\ltom},\\
\scp{\mu\,\vec{\tilde\omega}}{\vec{\tilde\omega}}_{\lt(\Xi)}
&\leq r^{-2}\epso^2\norm{\vec\omega}_{\ltom}^2
=r^{-2}\epso^2\int_{\Xi}\det(\na\Phi)\norm{\vec{\tilde\omega}}^2
=r\epso^2\norm{\vec{\tilde\omega}}_{\lt(\Xi)}^2,\\
\scp{\mu\,\vec{\tilde\omega}}{\vec{\tilde\omega}}_{\lt(\Xi)}
&\geq r\epsu^{-2}\norm{\vec{\tilde\omega}}_{\lt(\Xi)}^2,
\end{align*}
i.e., 
$\displaystyle\hat\mu
\leq\max\{\sqrt{r}\epso,\epsu/\sqrt{r}\}
\leq\frac{\max\{r,1\}}{\sqrt{r}}\epsh$, 
which shows
$$\cmteps
\leq r\max\{1,r\}
c_{\mathsf{m,t},\mu,\Xi}
\leq r\max\{1,r\}
\,\hat\mu\,c_{\mathsf{p},\Xi}
\leq\sqrt{r}\max\{1,r\}^2
\,\epsh\,c_{\mathsf{p},\Xi}.$$
On the other hand, \eqref{resultPhigen} gives with 
$c_{\det(\na\Phi)}=r^{\nicefrac{3}{2}}$, 
$\hat{c}_{\na\Phi}=\sqrt{3}r^{\nicefrac{1}{2}}$,
$\check{c}_{\na\Phi}=\sqrt{3}r^{-\nicefrac{1}{2}}$ 
the less sharp estimate
$$\cmteps
\leq3\sqrt{3}r^{\nicefrac{3}{2}}\max\{1,r^2\}^3
\,\epsh\,c_{\mathsf{p},\Xi}.$$

\section{Proof of \eqref{PhisdLip} in the Bi-Lipschitz Case.}
\label{proofbiliptrans}

\subsection{Without Boundary Conditions}
\label{appwithoutbc}

For this, let $\omega=\sum_{I}\omega_{I}\ed x^{I}\in\eDom{q}$. We have to prove
$\Phi^{*}\omega\in\eD{q}(\Xi)$ with $\ed\Phi^{*}\omega=\Phi^{*}\ed\omega$.
Let us first assume $\omega\in\Cic{q}(\rN)$, i.e., $\omega_{I}\in\cic(\rN)$
for all $I$. 
By Appendix \ref{appremnonconvex} we have
\begin{align*}
\ed\Phi_{j}
&=\sum_{i}\p_{i}\Phi_{j}\ed x^{i},
&
\Phi^{*}\omega
&=\sum_{I}\tilde{\omega}_{I}\Phi^{*}\ed x^{I}
=\sum_{I}\tilde{\omega}_{I}(\ed\Phi_{i_{1}})\wedge\dots\wedge(\ed\Phi_{i_{q}}),\\
&&
\ed\omega
&=\sum_{I,j}\p_{j}\omega_{I}(\ed x_{j})\wedge(\ed x^{I}).
\end{align*}
By Rademacher's theorem we know that $\tilde\omega_{I}=\omega_{I}\circ\Phi$
and $\Phi_{j}$ belong to $\mathsf{C}^{0,1}(\Xi)\subset\ho(\Xi)$ and that the chain rule holds, i.e.,
$\p_{i}\tilde\omega_{I}=\sum_{j}\widetilde{\p_{j}\omega_{I}}\p_{i}\Phi_{j}$.
As $\Phi_{j}\in\ho(\Xi)$ we get $\ed\Phi_{j}\in\eDz{1}(\Xi)$ by
$$\scp{\ed\Phi_{j}}{\cd\varphi}_{\Lt{1}(\Xi)}
=-\scp{\Phi_{j}}{\cd\cd\varphi}_{\Lt{0}(\Xi)}
=0$$
for all $\varphi\in\Cic{2}(\Xi)$. 
Thus by definition we see
\begin{align*}
\ed\Phi^{*}\omega
&=\sum_{I}(\ed\tilde{\omega}_{I})\wedge(\ed\Phi_{i_{1}})\wedge\dots\wedge(\ed\Phi_{i_{q}})
=\sum_{I,i}\p_{i}\tilde{\omega}_{I}
(\ed x^{i})\wedge(\ed\Phi_{i_{1}})\wedge\dots\wedge(\ed\Phi_{i_{q}})\\
&=\sum_{I,i,j}\widetilde{\p_{j}\omega_{I}}\p_{i}\Phi_{j}
(\ed x^{i})\wedge(\ed\Phi_{i_{1}})\wedge\dots\wedge(\ed\Phi_{i_{q}})
=\sum_{I,j}\widetilde{\p_{j}\omega_{I}}
(\ed\Phi_{j})\wedge(\ed\Phi_{i_{1}})\wedge\dots\wedge(\ed\Phi_{i_{q}}).
\end{align*}
On the other hand it holds
\begin{align*}
\Phi^{*}\ed\omega
&=\sum_{I,j}\widetilde{\p_{j}\omega_{I}}(\Phi^{*}\ed x_{j})\wedge(\Phi^{*}\ed x^{I})
=\sum_{I,j}\widetilde{\p_{j}\omega_{I}}(\ed\Phi_{j})\wedge(\ed\Phi_{i_{1}})\wedge\dots\wedge(\ed\Phi_{i_{q}}).
\end{align*}
Therefore, $\Phi^{*}\omega\in\eD{q}(\Xi)$ with $\ed\Phi^{*}\omega=\Phi^{*}\ed\omega$.
For general $\omega\in\eDom{q}$ we pick $\phi\in\Cic{q+1}(\Xi)$.
The first part of the proof (for $\omega=*\,\phi$ and $\Phi=\Psi$) shows
$\Psi^{*}*\,\phi\in\eDom{N-q-1}$ with $\ed\Psi^{*}*\,\phi=\Psi^{*}\ed*\,\phi$.
As $\supp*\,\Psi^{*}*\,\phi$ is a compact subset of $\om$,
standard mollification yields a sequence $(\Phi_{n})\subset\Cic{q+1}(\om)$
with $\Phi_{n}\to*\,\Psi^{*}*\,\phi$ in $\cDom{q+1}$. Then
\begin{align*}
\scp{\Phi^{*}\omega}{\cd\phi}_{\Lt{q}(\Xi)}
&=\int_{\Xi}\Phi^{*}\omega\wedge*\cd\phi
=\pm\int_{\Xi}\Phi^{*}\omega\wedge\Phi^{*}\Psi^{*}\ed*\,\phi
=\pm\int_{\Xi}\Phi^{*}(\omega\wedge\Psi^{*}\ed*\,\phi)\\
&=\pm\int_{\om}\omega\wedge\Psi^{*}\ed*\,\phi
=\pm\int_{\om}\omega\wedge\ed\Psi^{*}*\,\phi
=\pm\scp{\omega}{\cd*\Psi^{*}*\,\phi}_{\Lt{q}(\om)}\\
&\uparrow\;\pm\scp{\omega}{\cd\Phi_{n}}_{\Lt{q}(\om)}
=\pm\scp{\ed\omega}{\Phi_{n}}_{\Lt{q+1}(\om)}\\
&\downarrow\;\pm\scp{\ed\omega}{*\Psi^{*}*\phi}_{\Lt{q+1}(\om)}
=\pm\int_{\om}\ed\omega\wedge\Psi^{*}*\,\phi\\
&=\pm\int_{\Xi}\Phi^{*}(\ed\omega\wedge\Psi^{*}*\,\phi)
=\pm\int_{\Xi}(\Phi^{*}\ed\omega)\wedge*\,\phi
=-\scp{\Phi^{*}\ed\omega}{\phi}_{\Lt{q+1}(\Xi)}
\end{align*}
and hence $\Phi^{*}\omega\in\eD{q}(\Xi)$ with $\ed\Phi^{*}\omega=\Phi^{*}\ed\omega$.
Finally, for $\omega\in\eps^{-1}\cDom{q}$
we have $\eps\,\omega\in\cDom{q}$ and $*\,\eps\,\omega\in\eDom{N-q}$.
Therefore, $\Phi^{*}*\,\eps\,\omega\in\eD{N-q}(\Xi)$ and
$\ed\Phi^{*}*\,\eps\,\omega=\Phi^{*}\ed*\,\eps\,\omega=\pm\Phi^{*}*\cd\,\eps\,\omega$ 
by the latter considerations. Hence
$$*\,\Phi^{*}*\cd\,\eps\,\omega
=\pm*\,\ed\Phi^{*}*\,\eps\,\omega
=\pm\cd(\underbrace{*\,\Phi^{*}*\,\eps\,\Psi^{*}}_{=\pm\mu})\,\Phi^{*}\omega$$
and $\mu\,\Phi^{*}\omega\in\cD{q}(\Xi)$. By \eqref{normPhisomegaomega} we see
$$\norm{\Phi^{*}\omega}_{\Lt{q}(\Xi)}
\leq c\,\norm{\omega}_{\Ltom{q}},\qquad
\norm{\ed\Phi^{*}\omega}_{\Lt{q+1}(\Xi)}
=\norm{\Phi^{*}\ed\omega}_{\Lt{q+1}(\Xi)}
\leq c\,\norm{\ed\omega}_{\Ltom{q+1}}$$
and
$$\norm{\cd\mu\,\Phi^{*}\omega}_{\Lt{q-1}(\Xi)}
=\norm{\ed\Phi^{*}*\eps\,\omega}_{\Lt{N-q+1}(\Xi)}
\leq c\,\norm{\ed*\eps\,\omega}_{\Ltom{N-q+1}}
=c\,\norm{\cd\eps\,\omega}_{\Ltom{q-1}}.$$

\subsection{With Boundary Conditions}

Let $\omega\in\eDcom{q}$ and $(\omega_{n})\subset\Cicom{q}$ 
with $\omega_{n}\to\omega$ in $\eDom{q}$. By Appendix \ref{appwithoutbc}
we know $\Phi^{*}\omega,\Phi^{*}\omega_{n}\in\eD{q}(\Xi)$ 
with $\ed\Phi^{*}\omega_{n}=\Phi^{*}\ed\omega_{n}$ as well as $\ed\Phi^{*}\omega=\Phi^{*}\ed\omega$.
Since $\Phi^{*}\omega_{n}=\sum_{I}\tilde{\omega}_{n,I}\Phi^{*}\ed x^{I}$ holds,
$\Phi^{*}\omega_{n}$ has compact support in $\Xi$. By standard mollification
we see $\Phi^{*}\omega_{n}\in\eDc{q}(\Xi)$.
Moreover, $\Phi^{*}\omega_{n}\to\Phi^{*}\omega$ in $\eD{q}(\Xi)$
as $\Phi^{*}\omega_{n}\to\Phi^{*}\omega$ in $\Lt{q}(\Xi)$ and 
$$\ed\Phi^{*}\omega_{n}=\Phi^{*}\ed\omega_{n}\to\Phi^{*}\ed\omega=\ed\Phi^{*}\omega$$
in $\Lt{q+1}(\Xi)$ by \eqref{normPhisomegaomega}. Therefore
$\Phi^{*}\omega\in\eDc{q}(\Xi)$ with $\edc\Phi^{*}\omega=\Phi^{*}\edc\omega$.

\end{document}